\def\op{\operatorname}
\def\mmod{\kern-1pt\operatorname{-mod}}
\newtheorem{theorem}{Theorem}[section]
\newtheorem{lemma}[theorem]{Lemma}
\newtheorem{definition}[theorem]{Definition}
\newtheorem{remark}[theorem]{Remark}
\newtheorem{proposition}[theorem]{Proposition}
\newtheorem{corollary}[theorem]{Corollary}
\numberwithin{equation}{section}
\begin{document}

\title[Complex representations of $SL_2(\bar{\mathbb{F}}_q)$]{Certain complex representations of $SL_2(\bar{\mathbb{F}}_q)$ }

\author{Junbin Dong}
\address{Institute of Mathematical Sciences, ShanghaiTech University, 393 Middle Huaxia Road, Pudong, Shanghai 201210, China.}
\email{dongjunbin@shanghaitech.edu.cn}

\subjclass[2010]{20C07, 20G05}

\date{September 20, 2022}

\keywords{Reductive algebraic group, highest weight category}

\begin{abstract}
We introduce the representation category $\mathscr{C}({\bf G})$ for a connected reductive algebraic  group ${\bf G}$ which is defined over a finite field $\mathbb{F}_q$ of $q$ elements. We show that this category has many good properties for ${\bf G}=SL_2(\bar{\mathbb{F}}_q)$. In particular, it is an abelian category and a highest weight category. Moreover, we classify the simple objects in $\mathscr{C}({\bf G})$ for ${\bf G}=SL_2(\bar{\mathbb{F}}_q)$.
\end{abstract}

\maketitle

\section{Introduction}

Let ${\bf G}$ be a connected reductive algebraic  group defined over the finite field $\mathbb{F}_q$ of $q$ elements. Let  $\Bbbk$ be another field and all representations in this paper are over $\Bbbk$. According to  a result of Borel and Tits \cite[Theorem 10.3 and Corollary 10.4]{BT}, we know that except the trivial representation, all other irreducible representations of $\Bbbk {\bf G}$ (the group algebra of $\bf G$) are infinite-dimensional if ${\bf G}$ is  semisimple and $\Bbbk $ is infinite with $\op{char}\Bbbk\neq \op{char} \bar{\mathbb{F}}_q$.  Denote by $G_{q^a}$ the set of $\mathbb{F}_{q^a}$-points of $\bf G$, then we have ${\bf G}=\bigcup G_{q^a}$.  With this basic fact, N.H. Xi studied the abstract representations of ${\bf G}$ over ${\bf \Bbbk}$ by taking the direct limit of the finite-dimensional representations of $G_{q^a}$ and he got many interesting results in \cite{Xi}. In particular, he showed that  the infinite-dimensional Steinberg module is irreducible when
$\op{char}\Bbbk=0$ or $\op{char}\Bbbk= \op{char} \bar{\mathbb{F}}_q$. Afterwards, R.T. Yang proved the irreducibility of the Steinberg module for any field $\Bbbk$ with $\op{char}\Bbbk\ne \op{char} \bar{\mathbb{F}}_q$ (see \cite{Yang}). Later, motivated by Xi's idea, the structure of the permutation module $\Bbbk [{\bf G}/{\bf B}]$ (${\bf B}$ is a fixed Borel subgroup of ${\bf G}$ ) was studied in  \cite{CD1} for the cross characteristic case and in \cite{CD2} for the defining characteristic case.
We studied the general abstract induced module $\mathbb{M}(\theta)=\Bbbk{\bf G}\otimes_{\Bbbk{\bf B}}{\Bbbk}_\theta$ in \cite{CD3} for any field $\Bbbk$ with $\op{char}\Bbbk\neq \op{char} \bar{\mathbb{F}}_q$ or $\Bbbk=\bar{\mathbb{F}}_q$, where $\bf T$ is a maximal  torus contained in a Borel subgroup $\bf B$ and $\theta$  is  a character of ${\bf T}$ which can also be regarded as a character of ${\bf B}$ through the homomorphism ${\bf B}\rightarrow{\bf T}$. The  induced module $\mathbb{M}(\theta)$  has a composition series (of finite length) if $\op{char}\Bbbk\neq \op{char} \bar{\mathbb{F}}_q$. In the case $\Bbbk=\bar{\mathbb{F}}_q$ and $\theta$ is a rational character, $\mathbb{M}(\theta)$ has such composition series if and only if $\theta$ is antidominant (see \cite{CD3} for details). In both cases, the composition factors of $\mathbb{M}(\theta)$  are $E(\theta)_J$ with $J\subset I(\theta)$ (see Section 2 for the explicit setting).

Now we have a large class of irreducible $\Bbbk {\bf G}$-modules. Let $\Bbbk{\bf G}$-Mod be the $\Bbbk {\bf G}$-module category. However, this category is too big and thus in the paper \cite{D1}, we introduce the principal representation category $\mathscr{O}({\bf G})$ which was supposed to have many good properties.
In particular, we conjectured that this category  is a highest weight category in the sense of Cline, Parshall and Scott \cite{CPS}. However, recently X.Y.Chen constructed a counter example (see \cite{Chen}) to show that this conjecture is not valid in general with the setting given in \cite[Section 4]{D1}. Thus it deserves to explore other categories besides the category $\mathscr{O}({\bf G})$  in \cite{D1}. We hope that there is a category which satisfies certain good properties such as ``finite-ness" and ``semi-simplicity", which is also like the BGG category $\mathscr{O}$ in the representations of complex semisimple Lie algebras.  In this paper we introduce a full subcategory $\mathscr{C}(\bf G)$  of $\Bbbk{\bf G}$-Mod, whose objects are finitely generated by some ${\bf T}$-eigenvectors (see Section 2 for the  definition of $\mathscr{C}(\bf G)$).   The main part of this paper is  devoted to study the category $\mathscr{C}(\bf G)$ for ${\bf G}=SL_2(\bar{\mathbb{F}}_q)$.

The rest of this paper is organized as follows:  Section 2 contains some preliminary results and  we also introduce the category $\mathscr{C}({\bf G})$ in this section. From Section 3 to Section 5, we assume that $\Bbbk$ is an algebraically closed field of characteristic $0$ and study the category $\mathscr{C}({\bf G})$ for ${\bf G}=SL_2(\bar{\mathbb{F}}_q)$. In Section 3, we classify the simple $\Bbbk {\bf G}$-modules with $\bf T$-stable lines. In particular, we get all the simple objects in $\mathscr{C}({\bf G})$. Then we show that $\mathscr{C}({\bf G})$ is an abelian category and has certain good properties in Section 4. In Section 5 we prove that $\mathscr{C}({\bf G})$ is a highest weight category.

\section{Background and preliminary results}

Let ${\bf G}$ be a connected reductive algebraic group over  $\bar{\mathbb{F}}_q$, the algebraic closure of  $\mathbb{F}_q$, e.g. ${\bf G}=GL_n(\bar{\mathbb{F}}_q)$, $SL_n(\bar{\mathbb{F}}_q)$.  Let ${\bf B}$ be a Borel subgroup, and ${\bf T}$ be a  maximal torus contained in ${\bf B}$, and ${\bf U}=R_u({\bf B})$ be the  unipotent radical of ${\bf B}$. We identify ${\bf G}$ with ${\bf G}(\bar{\mathbb{F}}_q)$ and do likewise for the various subgroups of ${\bf G}$ such as ${\bf B}, {\bf T}, {\bf U}$ $\cdots$. We denote by $\Phi=\Phi({\bf G};{\bf T})$ the corresponding root system, and by $\Phi^+$ (resp. $\Phi^-$) the set of positive (resp. negative) roots determined by ${\bf B}$. Let $W=N_{\bf G}({\bf T})/{\bf T}$ be the corresponding Weyl group. We denote by $\Delta=\{\alpha_i\mid i\in I\}$ the set of simple roots and by $S=\{s_i:=s_{\alpha_i}\mid i\in I\}$ the corresponding simple reflections in $W$. For each $\alpha\in\Phi$, let ${\bf U}_\alpha$ be the root subgroup corresponding to $\alpha$ and we fix an isomorphism $\varepsilon_\alpha: \bar{\mathbb{F}}_q\rightarrow{\bf U}_\alpha$ such that $t\varepsilon_\alpha(c)t^{-1}=\varepsilon_\alpha(\alpha(t)c)$ for any $t\in{\bf T}$ and $c\in\bar{\mathbb{F}}_q$. For any $w\in W$, let ${\bf U}_w$ (resp. ${\bf U}_w'$) be the subgroup of ${\bf U}$ generated by all ${\bf U}_\alpha$  with $w(\alpha)\in\Phi^-$ (resp. $w(\alpha)\in\Phi^+$). One is refereed \cite{Car} for more details.

Now let $\Bbbk$ be an algebraically closed field of characteristic $0$ and all the representations in this paper are over $\Bbbk$. Let $\widehat{\bf T}$ be the set of characters of ${\bf T}$. Each $\theta\in\widehat{\bf T}$ can be regarded as a character of ${\bf B}$ by the homomorphism ${\bf B}\rightarrow{\bf T}$. Let ${\Bbbk}_\theta$ be the corresponding ${\bf B}$-module. We consider the induced module $\mathbb{M}(\theta)=\Bbbk{\bf G}\otimes_{\Bbbk{\bf B}}{\Bbbk}_\theta$. Let ${\bf 1}_{\theta}$ be a fixed nonzero element in ${\Bbbk}_\theta$. We abbreviate $x{\bf 1}_{\theta}:=x\otimes{\bf 1}_{\theta}\in\mathbb{M}(\theta)$ for $x\in {\bf G}$. It is not difficult to see that $\mathbb{M}(\theta)$ has a basis $\{u \dot{w} {\bf 1}_{\theta}\mid w\in W,  u\in {\bf U}_{w^{-1}}\}$ by the Bruhat decomposition, where $\dot{w}$ is a fixed representative of $w \in W$.

For each $i \in I$, let ${\bf G}_i$ be the subgroup of $\bf G$ generated by ${\bf U}_{\alpha_i}, {\bf U}_{-\alpha_i}$ and set ${\bf T}_i= {\bf T}\cap {\bf G}_i$.
For $\theta\in\widehat{\bf T}$, define the subset $I(\theta)$ of $I$ by $$I(\theta)=\{i\in I \mid \theta| _{{\bf T}_i} \ \text {is trivial}\}.$$
The Weyl group $W$ acts naturally on $\widehat{\bf T}$ by
$$(w\cdot \theta ) (t):=\theta^w(t)=\theta(\dot{w}^{-1}t\dot{w})$$
for any $\theta\in\widehat{\bf T}$.

Let $J\subset I(\theta)$, and ${\bf G}_J$ be the subgroup of $\bf G$ generated by ${\bf G}_i$, $i\in J$. We choose a representative $\dot{w}\in {\bf G}_J$ for each $w\in W_J$ (the standard parabolic subgroup of $W$). Thus, the element $w{\bf 1}_\theta:=\dot{w}{\bf 1}_\theta$  $(w\in W_J)$ is well-defined. For $J\subset I(\theta)$, we set
$$\eta(\theta)_J=\sum_{w\in W_J}(-1)^{\ell(w)}w{\bf 1}_{\theta},$$
where $\ell$ is the length function on $W$.  Let $\mathbb{M}(\theta)_J=\Bbbk{\bf G}\eta(\theta)_J$ the $\Bbbk {\bf G}$-module which is generated by $\eta(\theta)_J$.

For $w\in W$, denote by  $\mathscr{R}(w)=\{i\in I\mid ws_i<w\}$.  For any subset $J\subset I$, we set
$$
\aligned
X_J &\ =\{x\in W\mid x~\op{has~minimal~length~in}~xW_J\}.
\endaligned
$$
We have the following proposition.

\begin{proposition}\cite[Proposition 2.5]{CD3} \label{MJ=KUW} \normalfont
For any $J\subset I(\theta)$, the $\Bbbk {\bf G}$-module $\mathbb{M}(\theta)_J$ has the form
\begin{align} \label{MJ}
\mathbb{M}(\theta)_J=\sum_{w\in X_J}\Bbbk{\bf U}\dot{w}\eta(\theta)_J=\sum_{w\in X_J}\Bbbk{\bf U}_{w_Jw^{-1}}\dot{w}\eta(\theta)_J.
\end{align}
In particular, the set $\{u\dot{w}\eta(\theta)_J \mid w\in X_J, u\in {\bf U}_{w_Jw^{-1}} \}$ forms a basis of $\mathbb{M}(\theta)_J$.
\end{proposition}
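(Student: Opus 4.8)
The plan is to prove the two claimed descriptions of $\mathbb{M}(\theta)_J$ by combining the standard Bruhat-type decomposition of $\mathbb{M}(\theta)$ with the alternating-sum structure of the generator $\eta(\theta)_J$. Recall that $\mathbb{M}(\theta)$ has basis $\{u\dot w{\bf 1}_\theta \mid u\in{\bf U},\ w\in W\}$, and that $\mathbb{M}(\theta)_J=\Bbbk{\bf G}\,\eta(\theta)_J$. Since $\Bbbk{\bf G}=\Bbbk{\bf U}\cdot\Bbbk N_{\bf G}({\bf T})\cdot\Bbbk{\bf U}$ (Bruhat decomposition) and $\eta(\theta)_J$ is by construction fixed up to sign by ${\bf G}_J$ — more precisely $\dot s_i\,\eta(\theta)_J=-\eta(\theta)_J$ for $i\in J$, and ${\bf U}_{\alpha_i}$ for $i\in J$ acts in a controlled way — one reduces $\Bbbk{\bf G}\eta(\theta)_J$ to a sum over coset representatives of $W/W_J$, i.e. over $X_J$. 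So the first task is:

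\emph{Step 1: the spanning statement.} I would start from $\mathbb{M}(\theta)_J=\sum_{w\in W}\Bbbk{\bf U}\dot w\,\eta(\theta)_J$ (which follows immediately from the Bruhat decomposition, pushing ${\bf U}$-factors to the left and absorbing ${\bf T}$ into the character $\theta$, using $J\subset I(\theta)$ so that ${\bf T}_i$ acts trivially on ${\bf 1}_\theta$ for $i\in J$). Then, for $w\notin X_J$, write $w=xv$ with $x\in X_J$, $v\in W_J$, $v\neq e$, and use $\dot v\,\eta(\theta)_J=(-1)^{\ell(v)}\eta(\theta)_J$ together with the fact that $\dot w$ and $\dot x\dot v$ differ only by an element of ${\bf T}$ (acting by a scalar since $J\subset I(\theta)$) to conclude $\Bbbk{\bf U}\dot w\,\eta(\theta)_J\subset\Bbbk{\bf U}\dot x\,\eta(\theta)_J$. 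This gives $\mathbb{M}(\theta)_J=\sum_{w\in X_J}\Bbbk{\bf U}\dot w\,\eta(\theta)_J$. To sharpen ${\bf U}$ to ${\bf U}_{w_Jw^{-1}}$, factor ${\bf U}=({\bf U}\cap\dot w{\bf U}_J^-\dot w^{-1})\cdot(\ldots)$ appropriately; concretely, for $u$ in the complementary factor one shows $u\dot w\,\eta(\theta)_J$ can be rewritten using that the relevant root subgroups, conjugated past $\dot w$, land inside ${\bf G}_J$ and hence act on $\eta(\theta)_J$ by the sign character, producing terms already in the span of the $u'\dot w\eta(\theta)_J$ with $u'\in{\bf U}_{w_Jw^{-1}}$. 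This is essentially a bookkeeping argument about which positive roots are sent by $w^{-1}$ into $\Phi_J$, and the combinatorial identity $\{\alpha\in\Phi^+ : w^{-1}\alpha\in\Phi_J^+\}$ matches the roots of ${\bf U}_{w_Jw^{-1}}$; I expect this indexing to be the fiddly-but-routine part.

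\emph{Step 2: linear independence.} For the "in particular" claim I would project onto the original basis of $\mathbb{M}(\theta)$. Expanding $u\dot w\,\eta(\theta)_J=\sum_{v\in W_J}(-1)^{\ell(v)}u\dot w\dot v\,{\bf 1}_\theta$ and using $\dot w\dot v\,{\bf 1}_\theta = t_{w,v}\,\overline{wv}\,{\bf 1}_\theta$ for a suitable ${\bf T}$-element contributing a nonzero scalar, the leading term (taking $v=e$, which gives the Bruhat cell of $w$ itself, and noting $w\in X_J$ so $\ell(wv)=\ell(w)+\ell(v)>\ell(w)$ for $v\neq e$) shows that distinct pairs $(w,u)$ with $w\in X_J$, $u\in{\bf U}_{w_Jw^{-1}}$ contribute $u\dot w{\bf 1}_\theta$ with distinct $(w,u)$-labels to the basis expansion, hence the vectors are linearly independent. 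A dimension/cardinality count then forces them to be a basis once spanning is established.

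\emph{Main obstacle.} The genuinely delicate point is Step 1's passage from $\sum_{w\in X_J}\Bbbk{\bf U}\dot w\eta(\theta)_J$ to the smaller sum over ${\bf U}_{w_Jw^{-1}}$: one must carefully track how conjugating root subgroups of ${\bf U}$ past $\dot w$ interacts with ${\bf G}_J$ and with the commutation relations in ${\bf U}$ (Chevalley commutator formulae), and verify that no "new" basis vectors outside the claimed span are produced. Everything else — the Bruhat reduction, the sign behavior of $\eta(\theta)_J$ under $W_J$, and the linear independence via the basis projection — is standard, but since this is cited as \cite[Proposition 2.5]{CD3} I would simply invoke that reference rather than reproduce the full argument, sketching only the two steps above to orient the reader.
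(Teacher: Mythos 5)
The paper itself does not prove this statement: it is imported verbatim, with citation, from \cite[Proposition 2.5]{CD3}, so there is no in-paper argument to match against. Your closing decision to cite that reference is therefore exactly what the paper does, and the sketch you give to orient the reader is along broadly reasonable lines; the linear-independence half is fine (and can be made cleaner: since $X_J\times W_J\to W$, $(w,v)\mapsto wv$, is a bijection, the basis monomials $u\dot{wv}{\bf 1}_\theta$ appearing in the expansion of $u\dot w\,\eta(\theta)_J$ have pairwise disjoint supports as $(w,u)$ ranges over $X_J\times{\bf U}_{w_Jw^{-1}}$, so independence is immediate, no leading-term filtration needed).

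There is, however, a real gap in your Step 1. You assert that $\mathbb{M}(\theta)_J=\sum_{w\in W}\Bbbk{\bf U}\dot w\,\eta(\theta)_J$ ``follows immediately from the Bruhat decomposition, pushing ${\bf U}$-factors to the left.'' It does not: writing $g=u_1\dot w t u_2$, the right-hand factor $u_2$ does \emph{not} stabilize $\eta(\theta)_J$ (since $\eta(\theta)_J$ involves $\dot v{\bf 1}_\theta$ with $v\ne e$, and $\dot v^{-1}u_2\dot v$ need not lie in ${\bf B}$), nor can you slide $u_2$ through $\dot w$ without leaving ${\bf U}$. The actual mechanism one needs is the parabolic double-coset decomposition ${\bf G}=\bigsqcup_{w\in X_J}{\bf B}\dot w{\bf P}_J$ together with the identification $\Bbbk{\bf P}_J\,\eta(\theta)_J=\Bbbk({\bf U}\cap{\bf L}_J)\,\eta(\theta)_J$, which in turn rests on two facts: the unipotent radical $R_u({\bf P}_J)$ fixes $\eta(\theta)_J$ pointwise (because $W_J$ preserves $\Phi^+\setminus\Phi^+_J$), and $\Bbbk{\bf G}_J\,\eta(\theta)_J$ is the Steinberg submodule of the ${\bf G}_J$-module $\Bbbk{\bf G}_J{\bf 1}_\theta\cong\operatorname{Ind}_{{\bf B}_J}^{{\bf G}_J}\Bbbk_{\mathrm{tr}}$ (using $J\subset I(\theta)$), hence equals $\Bbbk({\bf U}\cap{\bf L}_J)\eta(\theta)_J$. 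Only after that does the fact that $w\in X_J$ conjugates ${\bf U}\cap{\bf L}_J$ into ${\bf U}$ let you absorb everything into $\Bbbk{\bf U}\dot w\,\eta(\theta)_J$. (Also, a small combinatorial slip: the roots of ${\bf U}_{w_Jw^{-1}}$ are $\{\alpha\in\Phi^+: w^{-1}\alpha\in\Phi^+_J\cup(\Phi^-\setminus\Phi^-_J)\}$, not just $\{\alpha: w^{-1}\alpha\in\Phi^+_J\}$.) Since you are citing \cite{CD3} anyway this does not invalidate your plan, but as written the sketch would mislead a reader into thinking the reduction is a formal Bruhat shuffle when it actually hinges on the Steinberg-submodule structure of the ${\bf G}_J$-orbit.
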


For $J\subset I(\theta)$, define
$$E(\theta)_J=\mathbb{M}(\theta)_J/\mathbb{M}(\theta)_J',$$
where $\mathbb{M}(\theta)_J'$ is the sum of all $\mathbb{M}(\theta)_K$ with $J\subsetneq K\subset I(\theta)$. We denote by $C(\theta)_J$ the image of $\eta(\theta)_J$ in $E(\theta)_J$.
We also set
$$
\aligned
Z_J &\ =\{w\in X_J \mid \mathscr{R}(ww_J)\subset J\cup (I\backslash I(\theta))\}.
\endaligned
$$
The following proposition gives a basis of $E(\theta)_J$.

\begin{proposition} \cite[Proposition 2.7]{CD3} \label{DesEJ} \normalfont
For $J\subset I(\theta)$, we have
\begin{align} \label{EJ}
E(\theta)_J=\sum_{w\in Z_J}\Bbbk {\bf U}_{w_Jw^{-1}}\dot{w}C(\theta)_J.
\end{align}
In particular, the set $\{u\dot{w}C(\theta)_J \mid w\in Z_J, u\in {\bf U}_{w_Jw^{-1}} \}$ forms a basis of $E(\theta)_J$.
\end{proposition}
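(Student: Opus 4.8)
The strategy is to identify $E(\theta)_J$ inside the $\Bbbk{\bf U}$-basis of $\mathbb{M}(\theta)_J$ furnished by Proposition~\ref{MJ=KUW}, by showing that $\mathbb{M}(\theta)_J'$ lies in ``triangular position'' with respect to the length filtration on that basis, so that exactly the cosets of the basis vectors indexed by $w\in Z_J$ survive in the quotient. First I would reduce $\mathbb{M}(\theta)_J'$ to its nearest contributors. For $J\subseteq K\subseteq I(\theta)$, decomposing $W_K=\bigsqcup_x xW_J$ into cosets with $x$ of minimal length (lengths are then additive and the representatives can be chosen inside ${\bf G}_K$) gives $\eta(\theta)_K=\sum_x(-1)^{\ell(x)}\dot x\,\eta(\theta)_J$, whence $\mathbb{M}(\theta)_K\subseteq\mathbb{M}(\theta)_J$. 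Therefore $\mathbb{M}(\theta)_J'=\sum_{i\in I(\theta)\setminus J}\mathbb{M}(\theta)_{J\cup\{i\}}$, and in $E(\theta)_J$ the image of every basis vector $u\dot v\,\eta(\theta)_{J\cup\{i\}}$ of $\mathbb{M}(\theta)_{J\cup\{i\}}$ given by Proposition~\ref{MJ=KUW} is zero.

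Next I would turn these vanishings into explicit straightening relations. Fix $i\in I(\theta)\setminus J$, set $K=J\cup\{i\}$, let $Y=X_J\cap W_K$ be the set of minimal-length representatives of $W_K/W_J$ and $x_0=w_Kw_J\in Y$ the longest of them, so $x_0w_J=w_K$ and $X_J=\{vx\mid v\in X_K,\ x\in Y\}$ with $\ell(vx)=\ell(v)+\ell(x)$. Expanding $\eta(\theta)_K=\sum_{x\in Y}(-1)^{\ell(x)}\dot x\,\eta(\theta)_J$ and rewriting each summand $u\dot v\dot x\,\eta(\theta)_J$ in the basis of Proposition~\ref{MJ=KUW} — using the inclusions ${\bf U}_{w_J(vx)^{-1}}\subseteq{\bf U}_{w_Kv^{-1}}$ for $x\in Y$ together with the identity $w_J(vx_0)^{-1}=w_Kv^{-1}$, which makes the $x_0$-term equal to $\pm u\dot{vx_0}\,\eta(\theta)_J$ with no straightening correction — yields in $E(\theta)_J$ a relation $u\dot w\,C(\theta)_J=\sum_{x\in Y\setminus\{x_0\}}(\pm)\,u_x\dot{vx}\,C(\theta)_J$ with $w=vx_0$. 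Here $\mathscr{R}(ww_J)=\mathscr{R}(vw_K)\supseteq K$, so $w\in X_J\setminus Z_J$, and every index $vx$ on the right is strictly shorter than $w$. Moreover $\mathscr{R}(vxw_J)\cap K=\mathscr{R}(xw_J)\cap K$ depends only on the $Y$-component of $w$ and equals $K$ exactly when that component is $x_0$, so every $w\in X_J\setminus Z_J$, paired with a chosen witness $i\in\mathscr{R}(ww_J)\cap(I(\theta)\setminus J)$, arises in this fashion.

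From here the spanning half of \eqref{EJ} follows by downward induction on $\ell(w)$: the relation above writes each $u\dot w\,C(\theta)_J$ with $w\in X_J\setminus Z_J$ as a $\Bbbk$-combination of vectors $u_x\dot{vx}\,C(\theta)_J$ with $\ell(vx)<\ell(w)$, which by induction lie in $\sum_{w'\in Z_J}\Bbbk{\bf U}_{w_Jw'^{-1}}\dot{w'}\,C(\theta)_J$. The principal difficulty is the other half — linear independence of the vectors $u\dot w\,C(\theta)_J$, $w\in Z_J$, equivalently the vanishing of $\mathbb{M}(\theta)_J'\cap\operatorname{span}\{u\dot w\,\eta(\theta)_J\mid w\in Z_J\}$. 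The obstacle is that a given leading index $w\in X_J\setminus Z_J$ appears in the straightening relations for \emph{every} witnessing $i$, so the relations coming from the different $\mathbb{M}(\theta)_{J\cup\{i\}}$ are highly redundant, and one must rule out that some combination of them has top-length support meeting $Z_J$. I would proceed by keeping, for each admissible pair $(w,u)$, only the relation attached to the least witnessing $i$; these retained relations together with the vectors $u\dot w\,\eta(\theta)_J$, $w\in Z_J$, automatically form a $\Bbbk$-basis of $\mathbb{M}(\theta)_J$, since the transition matrix to the basis of Proposition~\ref{MJ=KUW} is unitriangular for the length grading (and the identity within each graded piece). It then remains to check that the retained relations span all of $\mathbb{M}(\theta)_J'$, which, after this unitriangular reduction, comes down to a Coxeter-combinatorial statement: upon cancelling the common leading term of two such relations attached to the same $w$ but different witnesses, the new leading term still lies in $X_J\setminus Z_J$ — concretely, that deleting one witnessing simple reflection from an element of the shape $(\text{coset representative})\cdot w_{J\cup\{i\}}$ always leaves another witnessing reflection in the right descent set (for $J=\emptyset$ this is the elementary fact that $i_1,i_2\in\mathscr{R}(w)$ forces $i_2\in\mathscr{R}(ws_{i_1})$). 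This combinatorial point is where I expect the real work to be; granting it one obtains $\mathbb{M}(\theta)_J=\mathbb{M}(\theta)_J'\oplus\operatorname{span}\{u\dot w\,\eta(\theta)_J\mid w\in Z_J\}$, hence \eqref{EJ} and the asserted basis of $E(\theta)_J$.
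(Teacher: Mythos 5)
The paper does not prove Proposition~\ref{DesEJ}; it is quoted verbatim from \cite[Proposition 2.7]{CD3}, so there is no ``paper's own proof'' here for me to compare against. I will therefore assess your proposal on its own terms.

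The spanning half of your argument is essentially correct. The reduction $\mathbb{M}(\theta)_J'=\sum_{i\in I(\theta)\setminus J}\mathbb{M}(\theta)_{J\cup\{i\}}$, the coset factorization $X_J=X_K\cdot Y$ with $Y=X_J\cap W_K$ and $x_0=w_Kw_J$ as its longest element, the identity $w_J(vx_0)^{-1}=w_Kv^{-1}$, the inclusion ${\bf U}_{w_J(vx)^{-1}}\subseteq{\bf U}_{w_Kv^{-1}}$ (which does hold: the relevant positive roots $\alpha$ with $w_J(vx)^{-1}\alpha<0$ all satisfy $v^{-1}\alpha\in\Phi_K^+$, hence $w_Kv^{-1}\alpha<0$), and the characterization $\mathscr{R}(vxw_J)\cap K=\mathscr{R}(xw_J)\cap K\in\{J,K\}$ with equality to $K$ exactly when $x=x_0$, are all sound; together with an (upward) induction on $\ell(w)$ they do give $E(\theta)_J=\sum_{w\in Z_J}\Bbbk{\bf U}_{w_Jw^{-1}}\dot w\,C(\theta)_J$, modulo the implicit use of the fact that straightening $u\dot{vx}\,\eta(\theta)_J$ for $u\in{\bf U}_{w_Kv^{-1}}\setminus{\bf U}_{w_J(vx)^{-1}}$ only produces terms of length $<\ell(vx_0)$ (this is a consequence of the Bruhat-cell support of ${\bf U}\dot{vx}\,\eta(\theta)_J$, and is the same machinery used to prove Proposition~\ref{MJ=KUW} in \cite{CD3}, but you should flag it as an input).

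The genuine gap is in the linear-independence half, and it is more serious than your hedging suggests. You correctly reduce the problem to showing that your ``retained'' relations span $\mathbb{M}(\theta)_J'$, and correctly note that the retained relations together with the $Z_J$-indexed basis vectors form a basis of $\mathbb{M}(\theta)_J$ by length-unitriangularity. But your proposed Coxeter-combinatorial lemma --- cancelling the common leading term of two relations with the same $(w,u)$ but different witnesses leaves a new leading term in $X_J\setminus Z_J$ --- does not close the induction even if true. After you subtract a retained relation to kill that new leading term, the remainder is no longer a difference of two relations with a common leading term, so the lemma cannot be reapplied to it; you would need the far stronger statement that \emph{every} nonzero element of $\mathbb{M}(\theta)_J'$ has its maximal-length support entirely in $X_J\setminus Z_J$. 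That stronger statement is logically equivalent to the disjointness $\mathbb{M}(\theta)_J'\cap\operatorname{span}\{u\dot w\,\eta(\theta)_J:w\in Z_J\}=0$ you are trying to prove, so the induction as set up is circular. Closing it requires either substantially finer bookkeeping of the straightening coefficients across all witnesses simultaneously (not just pairwise cancellation), or an argument of a different nature --- for instance exhibiting an explicit $\Bbbk{\bf B}$-module complement to $\mathbb{M}(\theta)_J'$ in $\mathbb{M}(\theta)_J$, or comparing against an independently constructed model of $E(\theta)_J$.
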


The $\Bbbk {\bf G}$-modules $E(\theta)_J$ are  irreducible and thus we get all the composition factors of $\mathbb{M}(\theta)$ (see  \cite[Theorem 3.1]{CD3}). According to \cite[Proposition 2.8]{CD3}, one has that  $E(\theta_1)_{K_1}$ is isomorphic to $E(\theta_2)_{K_2}$ as $\Bbbk {\bf G}$-modules if and only if $\theta_1=\theta_2$ and $K_1=K_2$.

For a $\Bbbk {\bf G}$-module $M$, an element $\xi \in M$ is called a ${\bf T}$-eigenvector if $ t\xi= \lambda(t) \xi$ for some $\lambda \in \widehat{\bf T}$. Set $M_{\lambda}= \{\xi \in M \mid t\xi= \lambda(t) \xi \}$, which is called the weight space corresponding to $\lambda \in \widehat{\bf T}$. A character $\lambda \in \widehat{\bf T}$ is called a weight of $M$ if $M_{\lambda}\ne 0$ and then we denote the  weight set of $M$ by $\text{Wt(M)}$.
Let $M^{\bf T}= \displaystyle \bigoplus_{\lambda \in \widehat{\bf T}} M_{\lambda} $ and set  $\mathfrak{d}(M) =  \dim M^{\bf T}$. With previous discussion and the form of $\mathbb{M}(\theta)_J$ and $E(\theta)_J$ (see (\ref{MJ}) and (\ref{EJ})), we are interested in the  $\Bbbk {\bf G}$-module $M$ which satisfies the following condition:

\noindent ($\star$) $n=\mathfrak{d}(M)< +\infty$ and  there exist ${\bf T}$-eigenvectors $\xi_1, \xi_2, \dots, \xi_n$ such that $M\cong \displaystyle  \bigoplus_{i=1}^n \Bbbk {\bf U} \xi_i$ as $\Bbbk {\bf B}$-modules.

Let $\mathscr{C}(\bf G)$ be the full  subcategory of $\Bbbk {\bf G}$-Mod, which consists of the $\Bbbk {\bf G}$-modules satisfying the condition ($\star$). From the definition of $\mathscr{C}(\bf G)$, it seems very difficult to judge whether it is an abelian category. Naturally, we have the following fundamental questions: (1) Is the category $\mathscr{C}(\bf G)$ an abelian category?
(2) Is this category noetherian or artinian? (3) Classify all the simple objects in $\mathscr{C}(\bf G)$. We will solve these problems for ${\bf G}= SL_2(\bar{\mathbb{F}}_q)$ in the following discussion.

\section{Simple modules with $\bf T$-stable lines.}

From now on, let ${\bf G} =SL_2(\bar{\mathbb{F}}_q)$, ${\bf T}$  be the diagonal matrices and ${\bf U}$ be the strictly upper unitriangular matrices in $SL_2(\bar{\mathbb{F}}_q)$. Let ${\bf B}$ be the Borel subgroup generated by ${\bf T}$ and ${\bf U}$, which is the upper triangular matrices in $SL_2(\bar{\mathbb{F}}_q)$.
As before, let ${\bf N}$ be the normalizer of ${\bf T}$ in ${\bf G}$ and  $W= {\bf N}/{\bf T}$ be the  Weyl group.  We set
$\dot{s}=\begin{pmatrix}0 &1\\  -1 &0\end{pmatrix}$, which is the simple reflection of $W$.
There are two natural isomorphisms
$$h:   \bar{\mathbb{F}}^*_q\rightarrow{\bf T}, \   h(c)=\begin{pmatrix}c &0 \\0&c^{-1}\end{pmatrix}; \ \ \ \
\varepsilon : \bar{\mathbb{F}}_q\rightarrow{\bf U}, \ \  \varepsilon(x)= \begin{pmatrix}1&x\\0&1\end{pmatrix} $$
which satisfies $h(c)\varepsilon(x)h(c)^{-1}= \varepsilon(c^2x)$. The simple root
$\alpha: {\bf T}\rightarrow  \bar{\mathbb{F}}^*_q$ is given by $\alpha(h(c))= c^2$.
Moreover, one has that
\begin{align} \label{sus=xsty}
\dot{s} \varepsilon(x) \dot{s}=\displaystyle \varepsilon(-x^{-1}) \dot{s} h(-x) \varepsilon(-x^{-1}).
\end{align}
In the following we often denote $\theta(x): =\theta(h(x))$  simply. For any finite subset $X$ of ${\bf G}$, let $\overline{X}:=\displaystyle \sum_{x\in X}x \in\Bbbk{\bf G}$. This notation will be frequently used later.

In order to understand the irreducible  $\Bbbk {\bf G}$-modules with $\bf T$-stable lines, it is enough to
study the simple quotients of the induced module $\op{Ind}_{\bf T}^{\bf G} \Bbbk_\theta$ for each $\theta \in \widehat{\bf T}$.  It is not difficult to see that $\op{Ind}_{\bf T}^{\bf G} \Bbbk_\theta \cong  \op{Ind}_{\bf T}^{\bf G} \Bbbk_{\theta^s}$ as $\Bbbk {\bf G}$-modules.
We will consider the following two cases: (1) $\theta$ is trivial; (2) $\theta$ is nontrivial.  When $\theta$ is trivial,  it is easy to see  that
$$\op{Ind}_{\bf T} ^{\bf G} \Bbbk_{\text{tr}}\cong \op{Ind}_{\bf N} ^{\bf G} \Bbbk_{+} \oplus \op{Ind}_{\bf N} ^{\bf G} \Bbbk_{-},$$
where $\Bbbk_{+}$ is the trivial representation of ${\bf N}$ and $\Bbbk_{-}$ is the sign representation of $W$, which also can be regarded as a representation of  ${\bf N}$.

In our case ${\bf G} =SL_2(\bar{\mathbb{F}}_q)$, according to \cite[Theorem 3.1]{CD3}, $\mathbb{M}(\text{tr})=\op{Ind}_{\bf B} ^{\bf G} \Bbbk_{\text{tr}}$ has a unique submodule $\text{St}$ (the Steinberg module) and the corresponding quotient module is trivial. The Steinberg module $\text{St}= \Bbbk {\bf U} \eta_{s}$, where $\eta_s= (1-s) {\bf 1}_{\text{tr}}$. Then we have $\dot{s} \eta_s =-\eta_s$ and $\dot{s} \varepsilon(x)\eta_s =(\varepsilon(-x^{-1})-1) \eta_s$ for $x\ne 0$ by (\ref{sus=xsty}). For any nontrivial character $\theta\in \widehat{{\bf T}}$, $\mathbb{M}(\theta)=\op{Ind}_{\bf B}^{\bf G} \Bbbk_\theta $ is a simple $\Bbbk {\bf G}$-module. The main theorem of this section is as following.

\begin{theorem} \label{SimpleStaline}\normalfont
(1) The trivial $\Bbbk {\bf G}$-module $\Bbbk_{\text{tr}}$ is the unique simple quotient module of $\op{Ind}_{\bf N} ^{\bf G} \Bbbk_{+}$ and the Steinberg module $\text{St}$ is the  unique simple quotient module of $\op{Ind}_{\bf N} ^{\bf G} \Bbbk_{-}$.
(2) When $\theta$ is nontrivial,  $\op{Ind}_{\bf T}^{\bf G} \Bbbk_\theta$ has just two simple  quotient modules  $\mathbb{M}(\theta)$ and
$\mathbb{M}(\theta^s)$.
\end{theorem}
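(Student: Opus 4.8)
The plan is to analyze the two cases separately, using the structure of $\op{Ind}_{\bf T}^{\bf G}\Bbbk_\theta$ relative to $\op{Ind}_{\bf B}^{\bf G}\Bbbk_\theta = \mathbb{M}(\theta)$. For part (1), the key observation is that $\op{Ind}_{\bf T}^{\bf G}\Bbbk_{+}$ (respectively $\op{Ind}_{\bf T}^{\bf G}\Bbbk_{-}$) surjects onto $\mathbb{M}(\mathrm{tr})$ via the natural map sending $g\otimes 1 \mapsto g\otimes{\bf 1}_{\mathrm{tr}}$, since ${\bf N}\subset{\bf B}$ on the relevant part — more precisely, $\op{Ind}_{\bf N}^{\bf G}\Bbbk_{+}$ has $\mathbb{M}(\mathrm{tr})$ as a quotient and $\op{Ind}_{\bf N}^{\bf G}\Bbbk_{-}$ has $\mathrm{St}$ as a quotient (this last point uses that the sign character of ${\bf N}$ restricted appropriately matches the generator $\eta_s=(1-s){\bf 1}_{\mathrm{tr}}$). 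I would then argue that any simple quotient of $\op{Ind}_{\bf N}^{\bf G}\Bbbk_{+}$ must itself be a quotient of $\op{Ind}_{\bf T}^{\bf G}\Bbbk_{\mathrm{tr}}$, hence a simple $\Bbbk{\bf G}$-module with a ${\bf T}$-stable line on which ${\bf T}$ acts trivially and which also carries an ${\bf N}$-fixed (resp. ${\bf N}$-sign) vector; by Frobenius reciprocity $\Hom(\op{Ind}_{\bf N}^{\bf G}\Bbbk_{\pm}, S)\ne 0$ forces $S^{\bf N}\ne 0$ with the right ${\bf N}$-action, and the classification of composition factors of $\mathbb{M}(\mathrm{tr})$ (namely $\Bbbk_{\mathrm{tr}}$ and $\mathrm{St}$, from \cite[Theorem 3.1]{CD3}) pins down which one occurs. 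Checking that $\mathrm{St}$ indeed has an ${\bf N}$-eigenvector of sign type uses the explicit formula $\dot s\,\eta_s=-\eta_s$ recorded just before the theorem.

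For part (2), when $\theta$ is nontrivial, first note $\mathbb{M}(\theta)=\op{Ind}_{\bf B}^{\bf G}\Bbbk_\theta$ is simple, and both $\mathbb{M}(\theta)$ and $\mathbb{M}(\theta^s)$ are quotients of $\op{Ind}_{\bf T}^{\bf G}\Bbbk_\theta$: the first via the obvious surjection, the second via $\op{Ind}_{\bf T}^{\bf G}\Bbbk_\theta\cong\op{Ind}_{\bf T}^{\bf G}\Bbbk_{\theta^s}\twoheadrightarrow\mathbb{M}(\theta^s)$. They are non-isomorphic when $\theta\ne\theta^s$, and when $\theta=\theta^s$ (i.e. $\theta$ nontrivial but $s$-invariant) one checks these two coincide, so "two simple quotients" should be read with the appropriate convention — I would state the count carefully. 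For the converse, suppose $S$ is any simple quotient; by Frobenius reciprocity $S^{\bf T}$ contains a vector of weight $\theta$. Since $S$ is generated by this vector as a $\Bbbk{\bf G}$-module and in particular as a $\Bbbk{\bf B}$-module (using ${\bf G}={\bf B}\cup{\bf B}\dot s{\bf B}$), one gets a ${\bf B}$-surjection from $\Bbbk_\theta$ up to $S$, hence a ${\bf G}$-surjection $\mathbb{M}(\theta)=\Bbbk{\bf G}\otimes_{\Bbbk{\bf B}}\Bbbk_\theta\twoheadrightarrow S$; but $\mathbb{M}(\theta)$ is simple, so $S\cong\mathbb{M}(\theta)$ — and symmetrically, if the ${\bf T}$-weight line we picked has weight $\theta^s$ we land in $\mathbb{M}(\theta^s)$. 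The point is that $\op{Ind}_{\bf T}^{\bf G}\Bbbk_\theta$ has exactly the two ${\bf T}$-weights $\theta,\theta^s$ appearing in its "top", and each forces the corresponding $\mathbb{M}$.

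I expect the main obstacle to be the converse direction in both parts: showing that \emph{every} simple quotient is one of the listed modules, rather than just exhibiting the listed ones as quotients. The subtlety is that a simple quotient $S$ of $\op{Ind}_{\bf T}^{\bf G}\Bbbk_\theta$ a priori only knows that $\Hom_{\bf T}(\Bbbk_\theta, S)\ne 0$, i.e. $S$ has \emph{some} ${\bf T}$-eigenvector of eigenvalue $\theta$; to promote this to a $\Bbbk{\bf B}$-generation statement (and hence a surjection from $\mathbb{M}(\theta)$) one must show the ${\bf G}$-submodule generated by that eigenvector, which is all of $S$ by simplicity, is already generated over $\Bbbk{\bf B}$ by it — equivalently that $\dot s$ applied to that vector lands back in $\Bbbk{\bf B}\xi$. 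For $SL_2$ this is where the Bruhat decomposition and the explicit commutation relation \eqref{sus=xsty} do the work: $\dot s\,\varepsilon(x)\,\xi$ can be rewritten, via \eqref{sus=xsty}, in terms of ${\bf U}$-translates and ${\bf T}$-scalars of $\xi$, so the $\dot s$-translate does not escape $\Bbbk{\bf B}\xi$ after all, except possibly producing the companion weight $\theta^s$. Handling the trivial-$\theta$ case is analogous but one must be careful to separate the $\Bbbk_+$ and $\Bbbk_-$ isotypic pieces and to verify that $\mathrm{St}$ (not $\Bbbk_{\mathrm{tr}}$) is the one carrying the sign-type ${\bf N}$-eigenvector, which again is immediate from $\dot s\,\eta_s=-\eta_s$.
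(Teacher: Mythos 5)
The approach is fundamentally flawed, and the central step does not work. The gap is in the passage from a ${\bf T}$-eigenvector to a surjection from $\mathbb{M}(\theta)$.

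By Frobenius reciprocity, a simple quotient $S$ of $\op{Ind}_{\bf T}^{\bf G}\Bbbk_\theta$ does indeed contain a ${\bf T}$-eigenvector $\xi$ of weight $\theta$. But to obtain a $\Bbbk{\bf G}$-surjection $\mathbb{M}(\theta)=\Bbbk{\bf G}\otimes_{\Bbbk{\bf B}}\Bbbk_\theta\twoheadrightarrow S$, you need a \emph{${\bf B}$-map} $\Bbbk_\theta\to S$, i.e.\ a vector that is ${\bf U}$-fixed \emph{and} a $\theta$-eigenvector for ${\bf T}$. A ${\bf T}$-eigenvector need not be ${\bf U}$-fixed, and your intermediate assertion that $S$ is ``generated over $\Bbbk{\bf B}$'' by $\xi$ neither holds in general nor would it suffice (you would need $\xi$ itself to span a ${\bf B}$-stable line). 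The target module $\mathbb{M}(\theta^s)$ already furnishes a counterexample: the $\theta$-weight line in $\mathbb{M}(\theta^s)$ is spanned by $\dot s\widehat{\bf 1}_{\theta^s}$, which is not ${\bf U}$-fixed, and $\Bbbk{\bf B}\,\dot s\widehat{\bf 1}_{\theta^s}=\Bbbk{\bf U}\,\dot s\widehat{\bf 1}_{\theta^s}$ is a proper $\Bbbk{\bf B}$-submodule. Likewise, the claim that relation \eqref{sus=xsty} lets you rewrite $\dot s\,\varepsilon(x)\xi$ ``in terms of ${\bf U}$-translates and ${\bf T}$-scalars of $\xi$'' is false: \eqref{sus=xsty} concerns $\dot s\,\varepsilon(x)\,\dot s$, and says nothing about $\dot s\xi$ lying in $\Bbbk{\bf B}\xi$. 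The same objection applies to part (1): having an ${\bf N}$-eigenvector of the right sign in a simple quotient does not place that quotient among the composition factors of $\mathbb{M}(\mathrm{tr})$.

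The paper's actual proof proceeds along entirely different lines. It exhibits the explicit submodules $\mathbb{M}_\pm$ and $\ker\varphi_e$, $\ker\varphi_s$, and then shows that any element $\xi$ outside them generates the whole induced module. This is carried out by writing $\xi$ in Bruhat coordinates, translating by $\dot s\,\varepsilon(v)$ with $v$ in a strictly larger finite field $\mathbb{F}_{q^n}$, and averaging over $U_{q^n}$, $G_{q^n}$ and a set of coset representatives $\mathfrak{D}_{q^n}$ to extract first $\overline{U_{q^n}}\dot s\,\overline{U_{q^n}^*}{\bf 1}$ and then $\overline{U_{q^n}}{\bf 1}$, finally invoking \cite[Lemma 2.6]{D2} to recover the generator. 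The key obstacle is showing that certain sums of character values do not vanish, and this is exactly where Proposition~\ref{detinf} and Corollary~\ref{nonzerocoeff} enter, resting on the deep finiteness theorem of Evertse--Schlickewei--Schmidt \cite{ESS} on $S$-unit equations. None of this machinery appears in, or is replaceable by, the Frobenius-reciprocity argument you sketched; the result genuinely requires controlling generation by arbitrary vectors outside the maximal submodules, not merely producing the two obvious quotient maps.
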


\begin{proof} [{Proof of Theorem \ref{SimpleStaline}(1)}]  For convenience,  we abbreviate $x{\bf 1}_{+}:=x\otimes{\bf 1}_{+}\in \op{Ind}_{\bf N} ^{\bf G} \Bbbk_{+}$ and
$x{\bf 1}_{-}:=x\otimes{\bf 1}_{-}\in \op{Ind}_{\bf N} ^{\bf G} \Bbbk_{-}$ for $x\in {\bf G}$, where ${\bf 1}_{+}$ (resp. ${\bf 1}_{-}$ ) is a fixed nonzero element in $\Bbbk_{+}$  (resp. $\Bbbk_{-}$).
Firstly we consider the simple quotient of $\op{Ind}_{\bf N} ^{\bf G} \Bbbk_{+}$. We construct a submodule of $\op{Ind}_{\bf N} ^{\bf G} \Bbbk_{+}$ as following
$$\mathbb{M}_{+}=\{\sum_{g\in{\bf G}}a_g g {\bf 1}_{+} \mid\sum_{g\in{\bf G}}a_g=0\}.$$
Then it is easy to see that $\op{Ind}_{\bf N} ^{\bf G} \Bbbk_{+} /\mathbb{M}_{+}$ is the trivial $\Bbbk {\bf G}$-module.
Now let $\xi \notin \mathbb{M}_{+}$ which has the following expression
$$\xi=  \sum_{x, y\in \bar{\mathbb{F}}_q } a_{x,y} \varepsilon(x)\dot{s}\varepsilon(y){\bf 1}_{+},$$
where $\displaystyle \sum_{x, y\in \bar{\mathbb{F}}_q } a_{x,y} \ne 0.$ Firstly there exists an integer $m\in \mathbb{N}$ such that $x, y\in \mathbb{F}_{q^m}$ when  $a_{x,y}\ne 0$. Now let $n>m$ with $m | n$ and  $u\in \mathbb{F}_{q^n} \setminus \mathbb{F}_{q^m}$. We consider the element
$\eta= \dot{s} \varepsilon(u) \xi $, which has the form
$$\eta = \displaystyle \sum_{x, y\in \bar{\mathbb{F}}_q } a_{x,y} \varepsilon(-(u+x)^{-1})\dot{s}\varepsilon((u+x)- (u+x)^2y){\bf 1}_{+}. $$
Choose $\mathfrak{D}_{q^n}\subset {\bf T}$ such that $\alpha:\mathfrak{D}_{q^n}\rightarrow  \mathbb{F}^*_{q^n}$ is a bijection. Thus  it is easy to check the element
$$\overline{\mathfrak{D}_{q^n}}\ \overline{U_{q^n}}\eta =(\sum_{x, y\in \bar{\mathbb{F}}_q } a_{x,y})\overline{U_{q^n}} \dot{s} \overline{U^*_{q^n}}{\bf 1}_{+} \in \Bbbk {\bf G}{\bf 1}_{+}$$
which implies that $\overline{U_{q^n}} \dot{s} \overline{U^*_{q^n}}{\bf 1}_{+}\in \Bbbk {\bf G}\xi$.

On the other hand, we consider the element
$$\overline{G_{q^n}}\xi =(\sum_{x, y\in \bar{\mathbb{F}}_q } a_{x,y})\overline{G_{q^n}}{\bf 1}_{+} \in \Bbbk {\bf G}\xi.$$
Noting that
$$\overline{G_{q^n}}{\bf 1}_{+}=(q^n-1) (2 \overline{U_{q^n}}{\bf 1}_{+} +  \overline{U_{q^n}} \dot{s} \overline{U^*_{q^n}}{\bf 1}_{+}) \in \Bbbk {\bf G}\xi,$$
therefore we have $\overline{U_{q^n}}{\bf 1}_{+} \in \Bbbk {\bf G}\xi$ and hence ${\bf 1}_{+} \in \Bbbk {\bf G}\xi $ by {\cite[Lemma 2.6]{D2}}.
Thus for any element $\xi \notin \mathbb{M}_{+}$,  we have $\Bbbk {\bf G}\xi =\op{Ind}_{\bf N} ^{\bf G} \Bbbk_{+}$. So, the trivial module is the unique simple quotient module of $\op{Ind}_{\bf N} ^{\bf G} \Bbbk_{+}$.

\smallskip

Now we consider the simple  quotient modules of $\op{Ind}_{\bf N} ^{\bf G} \Bbbk_{-}$. For convenience, we denote by
$$\Lambda(z)= (\dot{s} \varepsilon(z) +1- \varepsilon(-{z}^{-1})) {\bf 1}_{-}$$
for each $z\in \bar{\mathbb{F}}^*_q$. Then we have $h(c)\Lambda(z)= \Lambda(c^{-2}z)$ for any $h(c) \in {\bf T}$. Moreover, it is easy to check that
\begin{align} \label{M-sub}
 \varepsilon(z)\Lambda(z^{-1})=-\Lambda(-z^{-1}) ,\quad
 \dot{s}\Lambda(z)=-\Lambda(-z^{-1}),  \quad \quad   \quad\\ \notag \dot{s}\varepsilon(x)\Lambda(y)=\varepsilon(-x^{-1})\Lambda(x(xy-1))+\Lambda(x)-\Lambda(y^{-1}(xy-1)),
\end{align}
where $xy\ne 1$. Let $\mathbb{M}_{-}$ be the submodule of $\op{Ind}_{\bf N} ^{\bf G} \Bbbk_{-}$ which is generated by $\Lambda(z), z\in \bar{\mathbb{F}}^*_q$. Since we have the equation
$$\dot{s}\varepsilon(z) \eta_s= (\varepsilon(-{z}^{-1})-1)\eta_s$$ in
the Steinberg module $\text{St}=\Bbbk {\bf U} \eta_s$. Thus it is not difficult to see that $\op{Ind}_{\bf N} ^{\bf G} \Bbbk_{-}/ \mathbb{M}_{-} $ is isomorphic to the Steinberg module.

Let $\zeta\in \op{Ind}_{\bf N} ^{\bf G} \Bbbk_{-}$ which is not in $ \mathbb{M}_{-} $. According to (\ref{M-sub}), then $\zeta$ has the following expression
$$\zeta= \sum_{xy\ne 1}a_{x,y} \varepsilon(x)\Lambda(y) {\bf 1}_{-} + \sum_{z\in \bar{\mathbb{F}}^*_q} b_{z} \Lambda(z){\bf 1}_{-}+ \sum_{u\in \bar{\mathbb{F}}_q} c_u\varepsilon(u) {\bf 1}_{-}  ,$$
where $c_u \ne 0$ for some $u$. There exists an integer $m\in \mathbb{N}$ such that $x, y,z,u\in \mathbb{F}_{q^m}$ when  $a_{x,y}\ne 0, b_{z}\ne 0$ and $c_u\ne 0$. Without lost of generality, we can assume that $c_{0}\ne 0$. Moreover, we can assume that $\displaystyle \sum_{u\in \bar{\mathbb{F}}_q} c_u \ne 0$. Otherwise, we can consider $\dot{s} \zeta$ instead of $\zeta$. Indeed, if we write
$$\dot{s}\zeta=\sum_{x y\ne 1}a'_{x,y} \varepsilon(x)\Lambda(y) {\bf 1}_{-} + \sum_{z\in \bar{\mathbb{F}}^*_q} b'_{z} \Lambda(z){\bf 1}_{-}+ \sum_{u\in \bar{\mathbb{F}}_q} c'_{u} \varepsilon(u){\bf 1}_{-},$$
it is easy to see that $\displaystyle \sum_{u\in \bar{\mathbb{F}}_q} c'_{u}=-c_0$ which is nonzero.
For the convenience of later discussion, we denote by
$$A= \sum_{xy\ne 1}a_{x,y},\quad B= \sum_{z\in \bar{\mathbb{F}}^*_q} b_{z}, \quad C= \sum_{u\in \bar{\mathbb{F}}_q} c_u. $$
Note that $C$ is nonzero by our assumption.

Now let $n>m$ with $m | n$  and $v \in \mathbb{F}_{q^n} \setminus \mathbb{F}_{q^m}$. We consider the element
$$\dot{s} \varepsilon(v)\zeta:= \sum_{x y\ne 1}f_{x,y} \varepsilon(x)\Lambda(y) {\bf 1}_{-} + \sum_{z\in \bar{\mathbb{F}}^*_q} g_{z} \Lambda(z){\bf 1}_{-}+ \sum_{u\in \bar{\mathbb{F}}_q} h_{u} \varepsilon(u){\bf 1}_{-}.$$
Then by (\ref{M-sub}), we get
\begin{align}\label{Eq3.1}
\sum_{x y\ne 1}f_{x,y}=A+B, \quad \sum_{z\in \bar{\mathbb{F}}^*_q} g_{z}=C, \quad \sum_{u\in \bar{\mathbb{F}}_q} h_{u}= 0.
\end{align}
Choose $\mathfrak{D}_{q^n}\subset {\bf T}$ such that $\alpha:\mathfrak{D}_{q^n}\rightarrow  \mathbb{F}^*_{q^n}$ is a bijection. Combining (\ref{Eq3.1}), it is not difficult to get
\begin{align}\label{Eq3.2}
\overline{\mathfrak{D}_{q^n}}\ \overline{U_{q^n}}\dot{s}\varepsilon(v)\zeta=(A+B+C) \overline{U_{q^n}} \dot{s} \overline{U^*_{q^n}}{\bf 1}_{-} \in \Bbbk {\bf G}\zeta.
\end{align}
On the other hand, we also have
\begin{align}\label{Eq3.3}
\overline{\mathfrak{D}_{q^n}}\ \overline{U_{q^n}}\zeta= (A+B)\overline{U_{q^n}} \dot{s} \overline{U^*_{q^n}}{\bf 1}_{-} + (q^n-1)C \overline{U_{q^n}}{\bf 1}_{-} \in \Bbbk {\bf G}\zeta.
\end{align}
If $A+B+C\ne 0$ and noting that $C\ne 0$, then we get $\overline{U_{q^n}}{\bf 1}_{-} \in \Bbbk {\bf G}\zeta$ using (\ref{Eq3.2}) and (\ref{Eq3.3}).
Now assume that $A+B+C=0$, then by (\ref{Eq3.3}), we have
$$\zeta':=\overline{U_{q^n}} \dot{s} \overline{U^*_{q^n}}{\bf 1}_{-} - (q^n-1) \overline{U_{q^n}}{\bf 1}_{-} \in \Bbbk {\bf G}\zeta.$$
Noting that $\zeta'$ can be written as
\begin{align*}\zeta' &\  =\sum_{z\in \mathbb{F}^*_{q^n}}\overline{U_{q^n}} \Lambda(z){\bf 1}_{-} - (q^n-1) \overline{U_{q^n}}{\bf 1}_{-}\\
&\  =\sum_{x,y\in \mathbb{F}^*_{q^n}, xy\ne 1} \varepsilon(x)\Lambda(y) {\bf 1}_{-} -(q^n-1) \overline{U_{q^n}}{\bf 1}_{-}
\end{align*}
using (\ref{M-sub}), thus it is not difficult to see that the sum of the coefficients in $\zeta'$ is $-2(q^n-1)$. So we can discuss $\zeta'$ instead of $\zeta$ form the beginning and also have $\overline{U_{q^n}}{\bf 1}_{-} \in \Bbbk {\bf G}\zeta$.

In conclusion, we have $\overline{U_{q^n}}{\bf 1}_{-} \in \Bbbk {\bf G}\zeta$ and thus  we get ${\bf 1}_{-} \in \Bbbk {\bf G}\zeta $ by {\cite[Lemma 2.6]{D2}}. Then for any element $\zeta \notin \mathbb{M}_{-}$,  we see that $\Bbbk {\bf G}\zeta =\op{Ind}_{\bf N} ^{\bf G} \Bbbk_{-}$. So, the Steinberg module is the unique simple quotient module of $\op{Ind}_{\bf N} ^{\bf G} \Bbbk_{-}$.
Hence Theorem \ref{SimpleStaline}(1) is proved.
\end{proof}

Before the proof of  Theorem \ref{SimpleStaline}(2), we need to introduce some properties of the nontrivial group homomorphisms from $\bar{\mathbb{F}}^*_q$  to $\Bbbk^*$, where $\Bbbk$ is an algebraically closed field of characteristic $0$ as before.

\begin{proposition} \normalfont \label{detinf}
Let $\lambda : \bar{\mathbb{F}}^*_q \rightarrow \Bbbk^*$ be a nontrivial  group homomorphism and $u_1, u_2, \dots, u_n\in \bar{\mathbb{F}}^*_q$, which are different from each other. Let $x_1, x_2,\dots, x_n$ be $n$ variables with values in $\bar{\mathbb{F}}^*_q$. Denote by $\mathbf{u}=(u_1,  u_2, \dots, u_n)$ and by $\mathbf{x}=(x_1,  x_2, \dots, x_n)$.  Let $A^\lambda_{\mathbf{u}}(\mathbf{x})$ be the matrix whose entry in  row $i$ and  column $j$ is $\lambda(x_i+u_j)$. Thus there exist infinitely many $\mathbf{x}\in (\bar{\mathbb{F}}^*_q)^n $
such that $\det A^\lambda_{\mathbf{u}}(\mathbf{x}) \ne 0$.
\end{proposition}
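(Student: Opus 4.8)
The plan is to bound, for suitable finite subfields $\mathbb{F}_{q^{m}}$, how many tuples $\mathbf{x}\in\mathbb{F}_{q^{m}}^{\,n}$ give $\det A^{\lambda}_{\mathbf{u}}(\mathbf{x})\neq 0$, by a second–moment (character–sum) estimate, and then to let $m\to\infty$. First I would reduce to $\Bbbk=\mathbb{C}$: since every element of $\bar{\mathbb{F}}^*_q$ has finite order, $\operatorname{Im}\lambda$ consists of roots of unity, so $K:=\mathbb{Q}(\operatorname{Im}\lambda)\subseteq\Bbbk$ is algebraic over $\mathbb{Q}$; fixing an embedding $\iota\colon K\hookrightarrow\mathbb{C}$, the homomorphism $\iota\circ\lambda\colon\bar{\mathbb{F}}^*_q\to\mathbb{C}^*$ is again nontrivial, and because $\det A^{\lambda}_{\mathbf{u}}(\mathbf{x})\in K$ we have $\det A^{\iota\lambda}_{\mathbf{u}}(\mathbf{x})=\iota\bigl(\det A^{\lambda}_{\mathbf{u}}(\mathbf{x})\bigr)$, so the two determinants vanish together. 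Thus we may assume $\Bbbk=\mathbb{C}$ with $\operatorname{Im}\lambda\subseteq\mathbb{C}^*$, which makes complex conjugation and $\lvert\cdot\rvert$ available.

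Next, fix $m_0$ with $\lambda|_{\mathbb{F}^*_{q^{m_0}}}$ nontrivial and $r$ with $u_1,\dots,u_n\in\mathbb{F}_{q^{r}}$, and for $m$ divisible by $\operatorname{lcm}(m_0,r)$ put $\chi:=\lambda|_{\mathbb{F}^*_{q^{m}}}$ (a nontrivial multiplicative character), extended by $\chi(0):=0$; set $D_m(\mathbf{x}):=\det\bigl(\chi(x_i+u_j)\bigr)_{1\le i,j\le n}$, so $D_m(\mathbf{x})=\det A^{\lambda}_{\mathbf{u}}(\mathbf{x})$ whenever all $x_i+u_j\ne0$. The computational heart is the elementary orthogonality identity
\[
\sum_{x\in\mathbb{F}_{q^{m}}}\chi(x+a)\,\overline{\chi(x+b)}=q^{m}\delta_{a,b}-1\qquad(a,b\in\mathbb{F}_{q^{m}}),
\]
with $\delta$ the Kronecker symbol: substituting $y=x+a$, for $a\ne b$ the sum becomes $\sum_{z\ne 0,1}\chi(z)=-\chi(1)=-1$, and for $a=b$ it counts the $q^{m}-1$ terms with $x+a\ne0$. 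Expanding $\lvert D_m(\mathbf{x})\rvert^{2}$ over $S_n\times S_n$ and summing over $\mathbf{x}$ coordinatewise, this identity together with the distinctness of the $u_j$ gives
\[
\sum_{\mathbf{x}\in\mathbb{F}_{q^{m}}^{\,n}}\lvert D_m(\mathbf{x})\rvert^{2}=\sum_{\sigma,\tau\in S_n}\operatorname{sgn}(\sigma)\operatorname{sgn}(\tau)\prod_{i=1}^{n}\bigl(q^{m}\delta_{\sigma(i),\tau(i)}-1\bigr).
\]
Since distinct permutations of $\{1,\dots,n\}$ agree in at most $n-2$ positions, the diagonal $\sigma=\tau$ contributes exactly $n!\,(q^{m}-1)^{n}$ while each off–diagonal term has absolute value at most $(q^{m}-1)^{n-2}$, so the left-hand side is at least $n!\,(q^{m}-1)^{n}-(n!)^{2}(q^{m}-1)^{n-2}$.

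Finally, because $\lvert D_m(\mathbf{x})\rvert\le n!$ always, the number of $\mathbf{x}\in\mathbb{F}_{q^{m}}^{\,n}$ with $D_m(\mathbf{x})\ne0$ is at least $(n!)^{-2}$ times the above, i.e. at least $(q^{m}-1)^{n}/n!-(q^{m}-1)^{n-2}$. Discarding the at most $n(n+1)q^{m(n-1)}$ tuples having some coordinate in $\{0,-u_1,\dots,-u_n\}$ still leaves, for all large $m$, at least $q^{mn}/(2\,n!)$ tuples $\mathbf{x}\in(\mathbb{F}^*_{q^{m}})^{n}$ with all $x_i+u_j\ne0$ and $\det A^{\lambda}_{\mathbf{u}}(\mathbf{x})=D_m(\mathbf{x})\ne0$. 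As this count tends to $\infty$ with $m$ and $\bar{\mathbb{F}}^*_q=\bigcup_m\mathbb{F}^*_{q^{m}}$, the set of such $\mathbf{x}\in(\bar{\mathbb{F}}^*_q)^{n}$ is infinite, which is the claim.

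I expect the main obstacle to be the second–moment identity itself: establishing the orthogonality relation cleanly (which forces the reduction to $\mathbb{C}$) and checking that among the $(n!)^{2}$ terms indexed by $S_n\times S_n$ only the $n!$ diagonal ones matter to leading order. A more hands-on alternative is induction on $n$: fixing $x_1,\dots,x_{n-1}$ so that the $(n-1)\times(n-1)$ minor is nonzero, the determinant becomes a nontrivial linear combination $\sum_j c_j\lambda(x_n+u_j)$, and one shows the functions $x\mapsto\lambda(x+u_j)$ are linearly independent on each sufficiently large $\mathbb{F}_{q^{m}}$ (via nonvanishing of Gauss sums); but the global estimate above disposes of all $n$ at once.
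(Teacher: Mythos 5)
Your argument is correct, and it takes a genuinely different route from the paper's. The paper proceeds by induction on $n$: expanding along a row, it reduces the problem to showing that a fixed nontrivial linear combination $\sum_k a_k\lambda(x_0+u_k)$ does not vanish identically, and proves this by invoking the Evertse--Schlickewei--Schmidt theorem on the number of solutions to unit equations in multiplicative groups of bounded rank. Your proof avoids that deep Diophantine input entirely: after reducing to $\Bbbk=\mathbb{C}$ (a legitimate step, since $\operatorname{Im}\lambda$ consists of roots of unity and $\mathbb{Q}(\operatorname{Im}\lambda)$ is an algebraic extension of $\mathbb{Q}$, hence embeddable in $\mathbb{C}$), you restrict $\lambda$ to a nontrivial multiplicative character $\chi$ on $\mathbb{F}_{q^m}^*$ and run a second-moment computation using only the elementary orthogonality identity
\[
\sum_{x\in\mathbb{F}_{q^m}}\chi(x+a)\overline{\chi(x+b)}=q^m\delta_{a,b}-1,
\]
which I have checked (via $y\mapsto y/(y+c)$). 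Expanding $\lvert D_m(\mathbf{x})\rvert^2$ over $S_n\times S_n$ and noting that distinct permutations agree in at most $n-2$ positions gives the lower bound $n!(q^m-1)^n - (n!)^2(q^m-1)^{n-2}$ for the second moment; dividing by the pointwise bound $(n!)^2$ and discarding the $O(q^{m(n-1)})$ tuples with some $x_i\in\{0,-u_1,\dots,-u_n\}$ leaves $\gtrsim q^{mn}/(2\,n!)$ good tuples in $(\mathbb{F}^*_{q^m})^n$ for all large $m$, hence infinitely many in $(\bar{\mathbb{F}}^*_q)^n$. Compared with the paper's proof, yours is self-contained (no subspace-theorem machinery), non-inductive, and quantitative: it shows a positive proportion of tuples over each sufficiently large subfield works, which is strictly more information than the qualitative ``infinitely many.'' The trade-off is that it relies on complex conjugation via the embedding into $\mathbb{C}$, whereas the paper's argument works directly in $\Bbbk$; both, however, require $\operatorname{char}\Bbbk=0$, so nothing is lost in your setting.
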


\begin{proof} The lemma is obvious when $n=1$. Assume that this proposition holds when $n\leq m$. Now we consider the case for $n=m+1$. We set
$$\mathbf{u}^k=(u_1, \dots,u_{k-1},u_{k+1}, \dots,  u_{m+1}),\ \ \ \ \mathbf{x}'= (x_1,  x_2, \dots, x_m).$$
By cofactor expansion respect to the last row, we have
$$\det A^\lambda_{\mathbf{u}}(\mathbf{x}) = \sum_{k=1}^{m+1} (-1)^{m+1+k}\lambda(x_{m+1}+u_k) \det A^\lambda_{\mathbf{u}^k}(\mathbf{x}').$$
For each fixed integer $k=1,2, \dots, m+1$, we have infinitely  many $\mathbf{x}'\in (\bar{\mathbb{F}}^*_q)^m $ such that $\det A^\lambda_{\mathbf{u}^k}(\mathbf{x}') \ne 0$ by the inductive hypothesis. Hence it is enough to show that for any element $a_1, a_2, \dots, a_{m+1} \in \Bbbk^*$, there exists  an element $x_0\in \bar{\mathbb{F}}^*_q$ such that
\begin{align} \label{nonzeroEQ}
a_1 \lambda(x_0+u_1) + a_2 \lambda(x_0+u_2) + \dots + a_{m+1} \lambda(x_0+u_{m+1}) \ne 0.
\end{align}

For an integer $s$, we let $\Gamma_s= \lambda (\mathbb{F}^*_{q^s})$,  which is a finite cyclic group in $\Bbbk^*$. Set $b_k= - a_k/a_{m+1}$ and we deal with  the following equation
$$b_1 y_1 +b_2 y_2 +\dots +b_m y_m=1$$
with solutions $\mathbf{y}=(y_1, y_2,\dots, y_m) \in (\Gamma_s)^m$. Denote the solution set of this equation  by $S(\mathbf{b},\Gamma_s)$. Then by \cite[Theorem 1.1]{ESS}, for any $s\in \mathbb{N}$, we have $|S(\mathbf{b}, \Gamma_s)|\leq \mathcal{C}(m)$ for some integer $\mathcal{C}(m)$ only depends on $m$.
Since $\lambda : \bar{\mathbb{F}}^*_q \rightarrow \Bbbk^*$ is nontrivial and then we can choose the integer $s$ large enough such that
$$|\Gamma_s /\ker \lambda|> \mathcal{C}(m)+m.$$
Thus there exists  $x_0\in \bar{\mathbb{F}}^*_q$ such that $x_0+u_j \ne 0$ for any $j=1,2, \dots,m+1$ and (\ref{nonzeroEQ}) holds. The proposition is proved.
\end{proof}

According to the above proof of Proposition \ref{detinf}, we get the following corollary immediately.

\begin{corollary} \label{nonzerocoeff} \normalfont
Let  $\lambda : \bar{\mathbb{F}}^*_q \rightarrow \Bbbk^*$ be a nontrivial  group homomorphism and $u_1, u_2, \dots, u_n\in \bar{\mathbb{F}}^*_q$,  which are different from each other.
Given $a_1, a_2,\dots, a_n \in \Bbbk^*$, then there exists infinitely many elements $x\in  \bar{\mathbb{F}}^*_q$ such that
$$a_1\lambda (x+u_1)+ a_2 \lambda(x+u_2)+\dots + a_n\lambda(x+u_n)\ne 0.$$

\end{corollary}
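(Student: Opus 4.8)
The assertion is inequality~(\ref{nonzeroEQ}) from the proof of Proposition~\ref{detinf}, restated for an arbitrary number $n$ of terms and with ``there is an $x$'' strengthened to ``there are infinitely many''. The first change is cosmetic: (\ref{nonzeroEQ}) is precisely this inequality for $n=m+1$, and the argument there is carried out for every $m$. For the second, the plan is to fix $m$ with $u_1,\dots,u_n\in\mathbb{F}_{q^m}$ and, for $s$ a large multiple of $m$ (so that $x+u_k\in\mathbb{F}_{q^s}$ whenever $x\in\mathbb{F}_{q^s}$), to show that $N_s:=\#\{\,x\in\mathbb{F}_{q^s}^{*}\mid x+u_k\ne0\ \forall k,\ \sum_{k}a_k\lambda(x+u_k)\ne0\,\}\to\infty$ as $s\to\infty$. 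Granting this, for any finite set $F\subset\bar{\mathbb{F}}^{*}_q$ one chooses $s$ with $F\cup\{u_1,\dots,u_n\}\subseteq\mathbb{F}_{q^s}$ and $N_s>|F|$, obtains an admissible $x\notin F$, and iterates --- enlarging $F$ by the elements found so far --- to produce infinitely many pairwise distinct ones.

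To prove $N_s\to\infty$ I would run a second-moment argument, essentially a quantitative form of the counting in the proof of Proposition~\ref{detinf}. Extend $\lambda$ by $\lambda(0):=0$ and embed the finite group $\Gamma_s=\lambda(\mathbb{F}_{q^s}^{*})$ into $\mathbb{C}$. For $j\ne k$, the substitution $t=(x+u_j)/(x+u_k)$ puts $x\in\mathbb{F}_{q^s}$ in bijection with $t\in\mathbb{F}_{q^s}\setminus\{0,1\}$, so $\sum_{x\in\mathbb{F}_{q^s}}\lambda(x+u_j)\overline{\lambda(x+u_k)}=\sum_{t\ne0,1}\lambda(t)=-1$ as soon as $\lambda|_{\mathbb{F}_{q^s}^{*}}$ is nontrivial (true for all large $s$, since $\lambda$ is nontrivial on $\bar{\mathbb{F}}^{*}_q=\bigcup_s\mathbb{F}_{q^s}^{*}$), whereas for $j=k$ the sum is $q^s-1$. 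Expanding $\big|\sum_k a_k\lambda(x+u_k)\big|^{2}$ and summing over $x$ then gives $\sum_{x\in\mathbb{F}_{q^s}}\big|\sum_{k}a_k\lambda(x+u_k)\big|^{2}=q^{s}\sum_{k}|a_k|^{2}-\big|\sum_{k}a_k\big|^{2}$. As each summand on the left is $\le(\sum_k|a_k|)^2$ and the $\le n$ values of $x$ with some $x+u_k=0$ contribute $0$, we obtain $N_s\ge\big(q^{s}\sum_{k}|a_k|^{2}-|\sum_{k}a_k|^{2}\big)/(\sum_k|a_k|)^{2}\to\infty$. (Alternatively, following Proposition~\ref{detinf} literally: a bad $x$ produces a solution in $\Gamma_s^{\,n-1}$ of $\sum_{k<n}(-a_k/a_n)y_k=1$, of which there are at most $\mathcal{C}(n)$ by \cite[Theorem~1.1]{ESS}, each hit by at most $|\ker\lambda\cap\mathbb{F}_{q^s}^{*}|$ values of $x$; this gives $N_s\ge(q^s-1)\big(1-\mathcal{C}(n)/|\Gamma_s|\big)-n$, which suffices when $\lambda$ has infinite image.)

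Combining this with the reduction of the first paragraph proves the corollary. The delicate point is establishing $N_s\to\infty$: on the second-moment route one must keep the Jacobi-sum evaluation $\sum_{t\ne0,1}\lambda(t)=-1$ and the change of variables honest; on the route mirroring Proposition~\ref{detinf} one must charge each bad $x$ to boundedly many unit-equation solutions and note that $N_s\ge(q^s-1)(1-\mathcal{C}(n)/|\Gamma_s|)-n$ is informative only when $|\Gamma_s|\to\infty$ --- not for characters $\lambda$ of small finite image, where the second-moment estimate is what secures the claim. The reduction to finite subfields and the final diagonal extraction are routine.
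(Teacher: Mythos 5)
Your proof is correct, and the main (second-moment) argument is genuinely different from the paper's. The paper simply asserts that the corollary follows ``immediately'' from the counting in Proposition~\ref{detinf}: a bad $x\in\mathbb{F}_{q^s}^*$ (one with $\sum_k a_k\lambda(x+u_k)=0$ and $x+u_k\neq 0$) yields, via $y_k=\lambda\bigl((x+u_k)/(x+u_n)\bigr)$, a solution in $\Gamma_s^{n-1}$ of the unit equation, so the bad set has size at most $\mathcal{C}(n-1)\cdot|\ker\lambda\cap\mathbb{F}_{q^s}^*|+n$, and since $q^s-1=|\Gamma_s|\cdot|\ker\lambda\cap\mathbb{F}_{q^s}^*|$ the count of good $x$ grows without bound --- this is exactly your parenthetical ``alternatively'' paragraph. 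Your character-sum second moment replaces the Evertse--Schlickewei--Schmidt input by a completely elementary orthogonality computation; what it buys is independence from the deep unit-equation theorem, at the price of the (routine but worth stating) step of fixing, for each $s$, an embedding of the finitely generated subfield $\mathbb{Q}(a_1,\dots,a_n,\Gamma_s)\subset\Bbbk$ into $\mathbb{C}$ so that $|\cdot|$ and complex conjugation make sense and roots of unity have modulus one. One substantive correction: your stated motivation for preferring the second-moment route --- that the ESS-based count ``suffices only when $\lambda$ has infinite image'' and might fail ``for characters of small finite image'' --- is a non-issue, because every nontrivial homomorphism $\bar{\mathbb{F}}_q^*\to\Bbbk^*$ automatically has infinite image: $\bar{\mathbb{F}}_q^*\cong\bigoplus_{\ell\neq p}\mathbb{Z}(\ell^\infty)$, and any nontrivial quotient of a Pr\"ufer group $\mathbb{Z}(\ell^\infty)$ is again isomorphic to $\mathbb{Z}(\ell^\infty)$, hence infinite; so $|\Gamma_s|\to\infty$ always, and the paper's route does cover all cases. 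Thus both routes work; yours is an alternative, not a repair.
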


\begin{remark} \normalfont
Proposition \ref{detinf} and Corollary \ref{nonzerocoeff} do not hold when $\Bbbk=\bar{\mathbb{F}}_q$.
Indeed, we can choose $q$ elements $u_1, u_2, \dots, u_q\in \bar{\mathbb{F}}^*_q$,  which are different from each other and satisfy
$u^q_1+u^q_2+\dots +u^q_q=0.$
Let $\lambda : \bar{\mathbb{F}}^*_q \rightarrow \bar{\mathbb{F}}^*_q$ be a group homomorphism such that $\lambda(x)=x^q$. Then it is easy to see that
$$\lambda (x+u_1)+  \lambda(x+u_2)+\dots + \lambda(x+u_q)=0$$
for any $x\in \bar{\mathbb{F}}^*_q$ with $x+u_i \ne 0$, which is a counter example to  Proposition \ref{detinf} and Corollary \ref{nonzerocoeff}. However I guess that Proposition \ref{detinf} and Corollary \ref{nonzerocoeff} still hold when $\op{char}\Bbbk\neq \op{char} \bar{\mathbb{F}}_q$. Other methods need to be developed.

\end{remark}

With the above preparations, now we prove Theorem \ref{SimpleStaline}(2).

\begin{proof} [{Proof of Theorem \ref{SimpleStaline}(2)}]

Let $\varphi_e: \op{Ind}_{\bf T}^{\bf G} \Bbbk_\theta \rightarrow \mathbb{M}(\theta)$ be the natural morphism such that $\varphi_e ({\bf 1}_{\theta})= {\widehat{\bf 1}}_{\theta}$, where ${\widehat{\bf 1}}_{\theta}$ is a fixed nonzero element in $\mathbb{M}(\theta)_{\theta}$. Let $\varphi_s: \op{Ind}_{\bf T}^{\bf G} \Bbbk_\theta \rightarrow \mathbb{M}(\theta^s) $ be the morphism such that $\varphi_s ({\bf 1}_{\theta})=\dot{s}{\widehat{\bf 1}}_{\theta^s}$. Both $\mathbb{M}(\theta)$ and $\mathbb{M}(\theta^s)$ are the simple quotient modules of $\op{Ind}_{\bf T}^{\bf G}\Bbbk_{\theta}$. Now let $\xi$ be a nonzero element in $\op{Ind}_{\bf T}^{\bf G}  \Bbbk_\theta $. We will show that ${\Bbbk {\bf G}} \xi= \op{Ind}_{\bf T}^{\bf G}  \Bbbk_\theta  $ if $\xi \notin  \ker \varphi_e$ and $\xi \notin  \ker \varphi_s$. Thus $\op{Ind}_{\bf T}^{\bf G} \Bbbk_\theta$  has no other simple quotient modules except $\mathbb{M}(\theta)$ and $\mathbb{M}(\theta^s)$.

\smallskip

\noindent Claim ($\clubsuit$): One has that $\overline{U_{ q^n}}\dot{s} \overline{U^*_{ q^n}}{\bf 1}_{\theta}\in \Bbbk {\bf G}\xi $ when $\xi \notin  \ker \varphi_e$.

\smallskip

\noindent Proof of Claim ($\clubsuit$): Now we write  $\xi$ as following
$$\xi= \sum_{x\in \bar{\mathbb{F}}_q}a_x \varepsilon(x){\bf 1}_{\theta}+ \sum_{y,z\in \bar{\mathbb{F}}_q} b_{y,z} \varepsilon(y)\dot{s}\varepsilon(z){\bf 1}_{\theta}.$$
Noting that  $\xi \notin  \ker \varphi_e$, the following equation
$$\varphi_e(\xi)= \sum_{x\in \bar{\mathbb{F}}_q}a_x {\widehat{\bf 1}}_{\theta}+ \sum_{y,z\in \bar{\mathbb{F}}_q} b_{y,z} \varepsilon(y)\dot{s}{\widehat{\bf 1}}_{\theta}\ne 0$$
tells us that $\displaystyle \sum_{x\in \bar{\mathbb{F}}_q}a_x \ne 0$ or $\displaystyle  \sum_{z\in \bar{\mathbb{F}}_q} b_{y,z}\ne 0$ for some $y$. Without lost of generality, we can assume that $\displaystyle \sum_{x\in \bar{\mathbb{F}}_q}a_x \ne 0$. Otherwise, it is enough to consider the element $\dot{s}^{-1}\varepsilon(-y_0) \xi$ instead of $\xi$. Moreover, we can also  assume that
$\displaystyle \sum_{x\in \bar{\mathbb{F}}_q}a_x+ \sum_{y,z\in \bar{\mathbb{F}}_q} b_{y,z}\ne 0.$
Otherwise, we can consider the element $t\xi$  for some $t\in {\bf T}$ instead of $\xi$. Indeed, it is easy to see that the sum of the coefficients in  $t\xi$  is
\begin{align} \label{coeff}
\displaystyle \theta(t)\sum_{x\in \bar{\mathbb{F}}_q}a_x+ \theta^s(t)\sum_{y,z\in \bar{\mathbb{F}}_q} b_{y,z}.
\end{align}
Noting that  $\displaystyle \sum_{x\in \bar{\mathbb{F}}_q}a_x$ and $\displaystyle  \sum_{y,z\in \bar{\mathbb{F}}_q} b_{y,z}$ are nonzero, we can choose one $t\in {\bf T}$ such that (\ref{coeff}) is nonzero since $\theta$ is nontrivial.

With the assumption that
$$\sum_{x\in \bar{\mathbb{F}}_q}a_x \ne 0  \quad \text{and} \quad \sum_{x\in \bar{\mathbb{F}}_q}a_x+ \sum_{y,z\in \bar{\mathbb{F}}_q} b_{y,z}\ne 0,$$
we show that $\displaystyle \overline{U_{q^n}} \dot{s} \overline{U^*_{q^n}} \in \Bbbk {\bf G}\xi$ for some $n\in \mathbb{N}$. Firstly there exists $m\in \mathbb{N}$ such that $x, y, z\in \mathbb{F}_{q^m}$ when $a_x \ne 0$ and $b_{y,z}\ne 0$. Let $n>m$ and $m| n$. Given an element $u\in \mathbb{F}_{q^n}\setminus \mathbb{F}_{q^m}$, then the element $\eta:=\dot{s} \varepsilon(u)\xi $ has the following form
\begin{align*}&\
 \eta=\sum_{x\in \bar{\mathbb{F}}_q}a_x \dot{s}\varepsilon(u+x){\bf 1}_{\theta}+ \sum_{y,z\in \bar{\mathbb{F}}_q} b'_{y,z} \varepsilon(-(u+y)^{-1})\dot{s} \varepsilon((u+y)^2z-(u+y)){{\bf 1}}_{\theta},
\end{align*}
where $b'_{y,z}=  b_{y,z}\theta(h(-(u+y)))$. Thus if we denote by $\eta$ as following
$$\eta=\sum_{x,y\in \bar{\mathbb{F}}_q} f_{x,y} \varepsilon(x)\dot{s}\varepsilon(y){\bf 1}_{\theta},$$
then we get  $f_x:=\displaystyle \sum_{y\in \bar{\mathbb{F}}_q} f_{x,y}\ne 0$ for some $x$.
Let $v\in  \bar{\mathbb{F}}^*_q$ such that $v+x \ne 0$ when $f_{x,y}\ne 0$ and
we consider the element
$$\zeta:=\dot{s} \varepsilon(v)\eta= \sum_{x,y\in \bar{\mathbb{F}}_q} f_{x,y}\theta(h(-(v+x))) \varepsilon(-(v+x)^{-1})\dot{s} \varepsilon((v+x)^2y-(v+x)){{\bf 1}}_{\theta}.$$
By Lemma \ref{nonzerocoeff}, we can choose some element $v$ such that the elements
$(v+x)^2y-(v+x)$ are nonzero when $f_{x,y}\ne 0$ and this element  $v$ also makes
$$f:=\sum_{x\in \bar{\mathbb{F}}_q} f_{x}\theta(h(-(v+x))) \ne 0.$$
As before we choose $\mathfrak{D}_{q^n}\subset {\bf T}$ such that $\alpha:\mathfrak{D}_{q^n}\rightarrow  \mathbb{F}^*_{q^n}$ is a bijection. Therefore  we get
$$\sum_{t\in \mathfrak{D}_{q^n}} \theta^s(t)^{-1} t  \overline{U_{ q^n}}\zeta= f \overline{U_{ q^n}}\dot{s} \overline{U^*_{ q^n}}{\bf 1}_{\theta}$$
which implies that $\overline{U_{ q^n}}\dot{s} \overline{U^*_{ q^n}}{\bf 1}_{\theta}\in \Bbbk {\bf G}\xi $.
Claim ($\clubsuit$) is proved.

\smallskip

\noindent Claim ($\spadesuit$): When  $\xi \notin  \ker \varphi_s$, one has an element
$$\xi'= \sum_{x\in \bar{\mathbb{F}}_q}a_x \varepsilon(x){\bf 1}_{\theta}+ \sum_{y\in \bar{\mathbb{F}}_q} b_{y} \varepsilon(y)\dot{s}{\bf 1}_{\theta}+ \sum_{u\in \bar{\mathbb{F}}_q, v\in \bar{\mathbb{F}}^*_q } c_{u,v} \varepsilon(u)\dot{s}\varepsilon(v){\bf 1}_{\theta} \in \Bbbk {\bf G}\xi$$
such that $\displaystyle \sum_{y\in \bar{\mathbb{F}}_q} b_{y} \ne 0$.

\smallskip

\noindent Proof of Claim ($\spadesuit$):  Firstly we study the form of the elements in $\ker \varphi_s$. Let
$$\varpi_1= \sum_{x\in \bar{\mathbb{F}}_q} f_x \varepsilon(x)(e+ \sum_{u\in \bar{\mathbb{F}}_q} f_{x,u} \theta^s(-u)\varepsilon(u)\dot{s} \varepsilon(u^{-1}) ){\bf 1}_{\theta},$$
where $\displaystyle \sum_{u\in \bar{\mathbb{F}}_q} f_{x,u} +1 =0$ for any $x\in  \bar{\mathbb{F}}_q$ and set
$$\varpi_2= \sum_{y\in \bar{\mathbb{F}}_q} \varepsilon(y) \sum_{v\in \bar{\mathbb{F}}_q} g_{y,v} \theta^s(-v)\varepsilon(v)\dot{s} \varepsilon(v^{-1}) {\bf 1}_{\theta},$$
where $\displaystyle \sum_{v\in \bar{\mathbb{F}}_q} g_{y,v} =0$ for any $y\in  \bar{\mathbb{F}}_q$. Moreover we let
$$ \varpi_3 =\sum_{z\in \bar{\mathbb{F}}_q} h_{z} \varepsilon(z) \dot{s} {\bf 1}_{\theta},\quad \text{where} \ \sum_{z\in \bar{\mathbb{F}}_q} h_{z}=0.$$
It is easy to check that $\varpi_1, \varpi_2, \varpi_3 \in \ker \varphi_s$. Denote by $\Omega_i$ the set of the elements with the form of $\varpi_i$ for $i=1,2,3$.
Then it is not difficult to see that each element of $\ker \varphi_s$  is a linear combination of the elements in $\Omega_1, \Omega_2$ and $\Omega_3$.

Now let $\xi$ be the following
$$\xi= \sum_{x\in \bar{\mathbb{F}}_q}\alpha_x \varepsilon(x){\bf 1}_{\theta}+ \sum_{y\in \bar{\mathbb{F}}_q} \beta_{y} \varepsilon(y)\dot{s}{\bf 1}_{\theta}+ \sum_{u\in \bar{\mathbb{F}}_q, v\in \bar{\mathbb{F}}^*_q } \gamma_{u,v} \varepsilon(u)\dot{s}\varepsilon(v){\bf 1}_{\theta}$$
If $\displaystyle \sum_{y\in \bar{\mathbb{F}}_q} \beta_{y} \ne 0$, then $\xi$ has already satisfied our requirement. Otherwise, we can write
$$\xi= \varpi + \sum_{u\in \bar{\mathbb{F}}_q, v\in \bar{\mathbb{F}}^*_q } \tau_{u,v} \varepsilon(u)\dot{s}\varepsilon(v){\bf 1}_{\theta},\quad \tau_{u,v}\in \Bbbk$$
such that  $ \varpi \in \ker \varphi_s$. Since $\xi \notin  \ker \varphi_s$, the element
$$\eta:= \displaystyle \sum_{u\in \bar{\mathbb{F}}_q, v\in \bar{\mathbb{F}}^*_q } \tau_{u,v} \varepsilon(u)\dot{s}\varepsilon(v){\bf 1}_{\theta}$$ is not in $ \ker \varphi_s$ and in particular, it is not in $\Omega_2$. Now for $x\in \bar{\mathbb{F}}_q$, set
$$\Xi_x= \{(u,v) \mid \tau_{u,v} \ne 0 \ \text{and}  \ (u-x)v=1   \}.$$
Since $\eta \notin  \ker \varphi_s$, there exist an element $x_0$ such that
\begin{align} \label{Eq3.4}
\sum_{(u,v)\in \Xi_{x_0}}\tau_{u,v} \theta^{s}(-v) \ne 0
\end{align}
by some easy computation.
Now we consider  the following element
$$\xi'= \dot{s}  \varepsilon(-x_0)\xi=\dot{s}  \varepsilon(-x_0)(\varpi + \eta ).$$
Firstly, we have $\dot{s}  \varepsilon(-x_0) \varpi \in  \ker \varphi_s$, which is a linear combination of elements in $\Omega_1, \Omega_2$ and $\Omega_3$. For the second part $\dot{s}  \varepsilon(-x_0) \eta$, it is easy to check that when $(u,v)\notin \Xi_{x_0}$, the element
$$\dot{s}  \varepsilon(-x_0)\varepsilon(u)\dot{s}\varepsilon(v){\bf 1}_{\theta}=  \varepsilon(u')\dot{s}\varepsilon(v'){\bf 1}_{\theta}$$
for some $u'\in \bar{\mathbb{F}}_q$ and $v' \in \bar{\mathbb{F}}^*_q$. On the other hand, for $(u,v)\in \Xi_{x_0}$, we have
$$\dot{s}  \varepsilon(-x_0)\sum_{(u,v)\in \Xi_{x_0}}\tau_{u,v}\varepsilon(u)\dot{s}\varepsilon(v){\bf 1}_{\theta} = \sum_{(u,v)\in \Xi_{x_0}}\tau_{u,v}\theta(-v^{-1})\varepsilon(-v)\dot{s}{\bf 1}_{\theta}.$$
Noting that $\theta(-v^{-1})= \theta^{s}(-v)$ and using (\ref{Eq3.4}), it is easy to check that $\xi'= \dot{s}  \varepsilon(-x_0)\xi \in \Bbbk {\bf G}\xi$ satisfies our requirement.

\smallskip

With Claim ($\clubsuit$) and Claim ($\spadesuit$), now we give the proof of Theorem \ref{SimpleStaline}(2). Let $\xi$ be an element such that  $\xi \notin  \ker \varphi_e$ and  $\xi \notin  \ker \varphi_s$. By Claim ($\spadesuit$), there exists an element
$$\xi'= \sum_{x\in \bar{\mathbb{F}}_q}a_x \varepsilon(x){\bf 1}_{\theta}+ \sum_{y\in \bar{\mathbb{F}}_q} b_{y} \varepsilon(y)\dot{s}{\bf 1}_{\theta}+ \sum_{u\in \bar{\mathbb{F}}_q, v\in \bar{\mathbb{F}}^*_q } c_{u,v} \varepsilon(u)\dot{s}\varepsilon(v){\bf 1}_{\theta} \in \Bbbk {\bf G}\xi$$
such that $\displaystyle \sum_{y\in \bar{\mathbb{F}}_q} b_{y} \ne 0$. We  denote by
$$A= \sum_{x\in \bar{\mathbb{F}}_q}a_x,\quad  B:=\displaystyle \sum_{y\in \bar{\mathbb{F}}_q} b_{y},\quad  C= \sum_{u\in \bar{\mathbb{F}}_q, v\in \bar{\mathbb{F}}^*_q } c_{u,v}$$ for simple.
Let $n\in \mathbb{N}$ such that $x,y,u,v\in \mathbb{F}_{q^n}$ when $a_x\ne 0, b_{y}\ne 0$ and $c_{u,v}\ne 0$. As before we choose $\mathfrak{D}_{q^n}\subset {\bf T}$ such that $\alpha:\mathfrak{D}_{q^n}\rightarrow  \mathbb{F}^*_{q^n}$ is a bijection. Hence it is not difficult to see that
$$\sum_{t\in \mathfrak{D}_{q^n}}\theta^s(t)^{-1} t \overline{U_{ q^n}}\xi'=A \sum_{t\in \mathfrak{D}_{q^n}}\theta(t^2){\bf 1}_{\theta} + (q^n-1)B \overline{U_{ q^n}}\dot{s} {\bf 1}_{\theta} + C \overline{U_{ q^n}}\dot{s} \overline{U^*_{ q^n}}{\bf 1}_{\theta},$$
which is in $\Bbbk {\bf G}\xi$. When $n$ is big enough and $\theta$ is nontrivial on $\mathbb{F}^*_{q^n}$, we have $\displaystyle \sum_{t\in \mathfrak{D}_{q^n}}\theta(t^2)=0$.
By Claim ($\clubsuit$), we have $\overline{U_{ q^n}}\dot{s} \overline{U^*_{ q^n}}{\bf 1}_{\theta}\in \Bbbk {\bf G}\xi $  and then we get $\overline{U_{ q^n}}\dot{s} {\bf 1}_{\theta} \in  \Bbbk {\bf G}\xi$ since $B$ is nonzero. So using {\cite[Lemma 2.6]{D2}},  we have ${\bf 1}_{\theta} \in  \Bbbk {\bf G}\xi$ and thus $\Bbbk {\bf G}\xi=\op{Ind}_{\bf T}^{\bf G} \Bbbk_\theta$.
Therefore the induced module $\op{Ind}_{\bf T}^{\bf G} \Bbbk_\theta$   has only two  simple  quotient modules  $\mathbb{M}(\theta)$ and $\mathbb{M}(\theta^s)$. The theorem is proved.

\end{proof}

\section{Abelian category $\mathscr{C}(\bf G)$}

In this section, we show that the category $\mathscr{C}(\bf G)$ is an abelian category for  ${\bf G} =SL_2(\bar{\mathbb{F}}_q)$. For convenience, we denote
$$\text{Irr}({\bf G})= \{ \Bbbk_{\text{tr}}, \text{St}, \mathbb{M}(\theta)\mid \theta \in \widehat{{\bf T}} \ \text{is nontrivial}\}.$$
This is the set of the simple objects in $\mathscr{C}(\bf G)$. By \cite[Theorem 2.5]{CD4}, the induced module $\text{Ind}_{\bf T}^{\bf B} \Bbbk_{\theta}$ has a unique simple $\Bbbk {\bf B}$-submodule $$\mathcal{S}_{\theta}=\{\displaystyle \sum_{x\in {\bar{\mathbb{F}}_q }} a_x \varepsilon(x) {\bf 1}_{\theta}\mid \sum_{x\in {\bar{\mathbb{F}}_q }}  a_x=0\}$$ and the corresponding quotient module is $ \Bbbk_{\theta}$.

\begin{lemma} \label{Notmodule}\normalfont Let $\mathcal{S}=\mathcal{S}_{\text{tr}}$ be the unique simple $\Bbbk {\bf B}$-submodule of $\text{Ind}_{\bf T}^{\bf B} \text{tr}$. Then $\mathcal{S}^n$ (the direct sum of $n$-copies of $\mathcal{S}$) can not assign a $\Bbbk {\bf G}$-module structure for any $n\in \mathbb{N}$.
\end{lemma}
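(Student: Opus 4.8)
The plan is to assume $\mathcal S^n$ carries a $\Bbbk{\bf G}$-module structure, call the module $M$, and derive a contradiction. The first step is a reduction to the simple case. Since $\mathcal S$ is a simple $\Bbbk{\bf B}$-module (recalled just above the lemma), $M|_{\Bbbk{\bf B}}$ is semisimple of finite length $n$; in particular $M$ has finite length as a $\Bbbk{\bf G}$-module, and any $\Bbbk{\bf G}$-submodule of $M$, being a $\Bbbk{\bf B}$-submodule of the semisimple module $\mathcal S^n$, is isomorphic to $\mathcal S^j$ for some $j$. Passing to a composition factor we may therefore assume $M$ is a \emph{simple} $\Bbbk{\bf G}$-module with $M|_{\bf B}\cong\mathcal S^m$ for some $1\le m\le n$.

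Next I would record two features of $\mathcal S$ that pass to $M$. (i) $\mathcal S$ has no ${\bf T}$-eigenvector: on $\operatorname{Ind}_{\bf T}^{\bf B}\text{tr}=\Bbbk[\bar{\mathbb F}_q]{\bf 1}_{\text{tr}}$ the torus permutes the basis $\{\varepsilon(x){\bf 1}_{\text{tr}}\}$ via $x\mapsto\alpha(t)x$, whose orbits on $\bar{\mathbb F}_q$ are $\{0\}$ and the single infinite orbit $\bar{\mathbb F}^*_q$, so a finitely supported eigenvector is supported on $\{0\}$, hence lies in $\Bbbk{\bf 1}_{\text{tr}}\cap\mathcal S=0$; therefore $M$ has no ${\bf T}$-eigenvector. (ii) For every $f\in\mathcal S$, once $\mathbb F_{q^k}$ contains the (finite) support of $f$ one has $\overline{U_{q^k}}f=\bigl(\textstyle\sum_x f_x\bigr)\overline{U_{q^k}}{\bf 1}_{\text{tr}}=0$; hence for each $v\in M$ we get $\overline{U_{q^k}}v=0$ for all $k\gg0$, whence $\overline{B_{q^k}}v=\overline{T_{q^k}}\,\overline{U_{q^k}}v=0$ and, using the Bruhat decomposition $\overline{G_{q^k}}=\overline{B_{q^k}}+\overline{U_{q^k}}\dot s\,\overline{B_{q^k}}$, also $\overline{G_{q^k}}v=0$ for $k\gg0$. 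Equivalently, $M$ has no ${\bf U}$-coinvariants ($M=I_{\bf U}M$, $I_{\bf U}$ the augmentation ideal of $\Bbbk{\bf U}$), and every nonzero cyclic submodule $\Bbbk G_{q^k}v\subseteq M$ contains no trivial $G_{q^k}$-summand for $k\gg0$. Finally, the simple socle $\mathcal S\hookrightarrow M|_{\bf B}$ together with Frobenius reciprocity makes $M$ a quotient of $\operatorname{Ind}_{\bf B}^{\bf G}\mathcal S$; applying $\operatorname{Ind}_{\bf B}^{\bf G}$ to $0\to\mathcal S\to\operatorname{Ind}_{\bf T}^{\bf B}\text{tr}\to\Bbbk_{\text{tr}}\to0$ identifies $\operatorname{Ind}_{\bf B}^{\bf G}\mathcal S$ with the kernel of $\operatorname{Ind}_{\bf T}^{\bf G}\text{tr}\twoheadrightarrow\mathbb M(\text{tr})$, so $M$ is a simple subquotient of $\operatorname{Ind}_{\bf T}^{\bf G}\text{tr}=\operatorname{Ind}_{\bf N}^{\bf G}\Bbbk_{+}\oplus\operatorname{Ind}_{\bf N}^{\bf G}\Bbbk_{-}$.

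To finish it would suffice to show that every simple $\Bbbk{\bf G}$-module arising as a subquotient of $\operatorname{Ind}_{\bf N}^{\bf G}\Bbbk_{+}$ or $\operatorname{Ind}_{\bf N}^{\bf G}\Bbbk_{-}$ carries a ${\bf T}$-stable line: by Theorem \ref{SimpleStaline} such a module is then $\Bbbk_{\text{tr}}$ or $\text{St}$, contradicting that $M|_{\bf B}\cong\mathcal S^m$ has no eigenvector (property (i)). I expect this last step — controlling the simple subquotients of $\operatorname{Ind}_{\bf N}^{\bf G}\Bbbk_{\pm}$ (equivalently of $\operatorname{Ind}_{\bf B}^{\bf G}\mathcal S$), i.e. ruling out an ``exotic'' simple constituent with no ${\bf T}$-stable line — to be the main obstacle. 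I would attack it with the same Bruhat-decomposition bookkeeping used in the proof of Theorem \ref{SimpleStaline}: given a putative simple $M$ with $M|_{\bf B}\cong\mathcal S^m$ and a generator $v$, write $M=\Bbbk{\bf B}v+\Bbbk{\bf U}\dot s\,\Bbbk{\bf B}v=\bigcup_k\Bbbk G_{q^k}v$ and use property (ii) (the vanishing of $\overline{G_{q^k}}v$, hence the absence of trivial $G_{q^k}$-summands in $\Bbbk G_{q^k}v$) together with the explicit action of $\dot s$, $\varepsilon(x)$, $h(c)$ on the basis of $\mathcal S$ to force a relation that is incompatible with $\overline{U_{q^k}}{\bf 1}_{\text{tr}}\ne0$ in the relevant induced modules, exactly as in the ($\clubsuit$)/($\spadesuit$) arguments of Section 3; this would show no such simple $M$ exists and complete the proof.
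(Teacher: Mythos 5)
Your proposal is a legitimate \emph{plan of attack}, and the reductions in the first two paragraphs are sound: $M|_{\bf B}$ is semisimple of finite length, so you may pass to a simple $\Bbbk{\bf G}$-subquotient $M$ with $M|_{\bf B}\cong\mathcal{S}^m$; $\mathcal{S}$ has no ${\bf T}$-eigenvector (correct, by the orbit analysis); for every $v\in M$ one has $\overline{U_{q^k}}v=0$ and hence $\overline{G_{q^k}}v=0$ for $k\gg 0$; and by Frobenius reciprocity $M$ is a quotient of $\operatorname{Ind}_{\bf B}^{\bf G}\mathcal{S}$, hence a subquotient of $\operatorname{Ind}_{\bf T}^{\bf G}\mathrm{tr}$. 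But the final paragraph is the whole problem, and it is left as a sketch. You yourself identify the needed claim --- that every simple subquotient of $\operatorname{Ind}_{\bf T}^{\bf G}\mathrm{tr}$ has a ${\bf T}$-stable line --- as ``the main obstacle,'' and then only gesture at an argument. Theorem~\ref{SimpleStaline} cannot be invoked here: it classifies simple \emph{quotients} of $\operatorname{Ind}_{\bf N}^{\bf G}\Bbbk_{\pm}$ (by finding the unique maximal submodule), not all simple subquotients, and its ($\clubsuit$)/($\spadesuit$) machinery is tailored to showing a cyclic element generates the whole module, not to controlling the full submodule lattice. So as written, the reduction replaces Lemma~\ref{Notmodule} with an ostensibly stronger unproved statement, and the ``incompatibility'' you expect property (ii) to produce is never actually derived.

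For comparison, the paper's proof is direct and entirely computational, avoiding the reduction to the simple case and avoiding induced modules altogether. It assumes $\mathcal{S}^n$ carries a $\Bbbk{\bf G}$-structure, writes $\dot{s}(e-\varepsilon(1))\lambda_i$ in the $(e-\varepsilon(x))\lambda_j$ basis with unknown coefficients $g^i_{j,\mu}$, propagates by the ${\bf T}$-action, and then evaluates $\dot{s}\varepsilon(z)\dot{s}$ on $(e-\varepsilon(1))\lambda_i$ in two ways using the relation $\dot{s}\varepsilon(z)\dot{s}=\varepsilon(-z^{-1})\dot{s}h(-z)\varepsilon(-z^{-1})$. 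This produces the functional identity (\ref{Eq4.3}), which must hold for all $z\in\bar{\mathbb{F}}^*_q$; since the basis vectors $\varepsilon(\cdot)\lambda_k$ involved have arguments that vary incompatibly as functions of $z$ on each side, the identity fails, giving the contradiction. Your observations (i) and (ii) are genuine structural facts about any hypothetical $M$ and could well be the backbone of an alternative proof, but to make the proposal complete you would need to actually carry out the analysis of $\operatorname{Ind}_{\bf B}^{\bf G}\mathcal{S}$ or reproduce a concrete contradiction as the paper does; at present there is a genuine gap at the decisive step.
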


\begin{proof}
Firstly, each element $\xi \in \mathcal{S}^n$ has the following form
$$\xi=\sum_{j,\mu} f_{j,\mu}(e-\varepsilon(x_{j,\mu}))\lambda_j, \quad \text{where} \ x_{j,\mu}\in \bar{\mathbb{F}}^*_q, \ f_{j,\mu}\in \Bbbk,$$
where each $\lambda_j$ denotes the trivial character for $j=1,2,\dots, n$. Suppose $\mathcal{S}^n $ has a $\Bbbk {\bf G}$-module structure and for each $i=1,2, \dots, n$, we set
$$\dot{s}(e-\varepsilon(1))\lambda_i= \sum_{j,\mu} g^i_{j,\mu}(e-\varepsilon(x^i_{j,\mu}))\lambda_j, \quad \text{where} \ x^i_{j,\mu}\in \bar{\mathbb{F}}^*_q, \ g^i_{j,\mu}\in \Bbbk.$$
Using $t\in {\bf T}$ to act on both sides, then it is not difficult to see that for $y\in \bar{\mathbb{F}}^*_q$, we have
\begin{align*}
\dot{s}(e-\varepsilon(y))\lambda_i= \sum_{j,\mu} g^i_{j,\mu}(e-\varepsilon(y^{-1}x^i_{j,\mu}))\lambda_j.
\end{align*}
Therefore we have
\begin{align} \label{Eq4.1}
&\ \dot{s}\varepsilon(z)\dot{s}(e-\varepsilon(1))\lambda_i = \sum_{j,\mu} g^i_{j,\mu} \dot{s} (\varepsilon(z)-\varepsilon(x^i_{j,\mu}+z))\lambda_j  \\ \notag
&\ = \sum_{j,\mu} g^i_{j,\mu} \sum_{k,\nu}  g^j_{k,\nu} (\varepsilon(z^{-1} x^j_{k,\nu})- \varepsilon((x^i_{j,\mu}+z)^{-1} x^j_{k,\nu})) \lambda_k
\end{align}
for any element $z\in \bar{\mathbb{F}}^*_q$ such that  $x^i_{j,\mu}+z \ne 0$.
On the other hand, since $\dot{s}\varepsilon(z)\dot{s}=  \varepsilon(-z^{-1}) \dot{s} h(-z) \varepsilon(-z^{-1})$, we get
\begin{align} \label{Eq4.2}
&\ \dot{s}\epsilon(z)\dot{s}(e-\varepsilon(1))\lambda_i =  \varepsilon(-z^{-1}) \dot{s} (\varepsilon(-z)- \varepsilon(z^2-z)) \lambda_i \\ \notag
&\  = \sum_{k,\nu} g^i_{k,\nu} (\varepsilon(-z^{-1}(1+x^i_{k,\nu}))-\varepsilon(z^{-1}(z-1)^{-1}(x^i_{k,\nu}-z+1)) )\lambda_k.
\end{align}
Combining the above two equations (\ref{Eq4.1}) and  (\ref{Eq4.2}), we get the following
\begin{align} \label{Eq4.3}
&\
\sum_{j,\mu} g^i_{j,\mu} \sum_{k,\nu}  g^j_{k,\nu} (\varepsilon(z^{-1} x^j_{k,\nu})- \varepsilon((x^i_{j,\mu}+z)^{-1} x^j_{k,\nu})) \lambda_k \\ \notag
&\  = \sum_{k,\nu} g^i_{k,\nu} (\varepsilon(-z^{-1}(1+x^i_{k,\nu}))-\varepsilon(z^{-1}(z-1)^{-1}(x^i_{k,\nu}-z+1)) )\lambda_k
\end{align}
for any $z\in \bar{\mathbb{F}}^*_q$ such that  $x^i_{j,\mu}+z \ne 0$ and $i=1,2,\dots,n$. However it is not difficult to see that (\ref{Eq4.3}) is  not an identity. Indeed, for any fixed $i=1,2, \dots, n$, if there exists $x^i_{k_0,\nu_0}\ne -1$, then we can choose one $z_0 \in \bar{\mathbb{F}}^*_q$ such that $z_0^{-1}(z_0-1)^{-1}(x^i_{k_0,\nu_0}-z_0+1)) $ is different from the following set
$$\{z_0^{-1} x^j_{k,\nu},\ (x^i_{j,\mu}+z_0)^{-1} x^{j}_{k,\nu},\ -z_0^{-1}(1+x^i_{k,\nu}) \mid  g^j_{k,\nu} \ne0, g^i_{j,\mu}\ne 0, g^i_{k,\nu} \ne 0. \}.$$
If there is only one $x^i_{k,\nu}= -1$ such that $g^i_{k,\nu} \ne 0$, then (\ref{Eq4.3}) does not hold obviously.
Therefore $\mathcal{S}^n$ can not assign a $\Bbbk {\bf G}$-module structure. The lemma is proved.
\end{proof}

In general, using the same discussion as  in Lemma \ref{Notmodule}, we have the following proposition.

\begin{proposition}\normalfont \label{Notmodule2}
Let $\mathcal{S_{\lambda}}$ be the unique simple $\Bbbk {\bf B}$-submodule of $\text{Ind}_{\bf T}^{\bf B} \Bbbk_{\lambda}$  for each $\lambda\in \widehat{\bf T}$. One has that $\displaystyle \bigoplus_{\lambda \in \widehat{\bf T}}\mathcal{S_{\lambda}}^{n_{\lambda}}$ can not assign a $\Bbbk {\bf G}$-module structure, where $n_{\lambda}\in \mathbb{N}$ and only finitely many $n_{\lambda}$ are nonzero for $\lambda\in \widehat{\bf T}$.
\end{proposition}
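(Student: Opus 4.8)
The plan is to reduce the general statement to the situation treated in Lemma 4.3 by the same "eigenvalue argument'', keeping track of the fact that the underlying $\Bbbk{\bf T}$-module decomposes into weight spaces indexed by the characters $\lambda$ appearing in the sum. Write $M=\sum_{\lambda}\mathcal{S}_\lambda^{n_\lambda}$ and suppose, for contradiction, that $M$ carries a $\Bbbk{\bf G}$-module structure extending the given $\Bbbk{\bf B}$-action. First I would record the shape of a general element: for each $\lambda$ with $n_\lambda\neq 0$ and each $i=1,\dots,n_\lambda$ we have a generator of the $i$-th copy, call it $\lambda_i$ (an eigenvector of weight $\lambda$), and every element of the $i$-th copy of $\mathcal{S}_\lambda$ is a $\Bbbk$-combination of vectors $(e-\varepsilon(x))\lambda_i$ with $x\in\bar{\mathbb{F}}_q^\ast$; here I use that $\mathcal{S}_\lambda$ is spanned by such differences, which follows from the description of $\mathcal{S}_\lambda$ recalled before Lemma 4.3.

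Next I would exploit the ${\bf T}$-action to pin down $\dot{s}$. Fix one copy, with generator-difference $(e-\varepsilon(1))\lambda_i$; since this vector has weight $\lambda$ under ${\bf T}$ (because $t\varepsilon(x)t^{-1}=\varepsilon(\alpha(t)x)$ and $t\lambda_i=\lambda(t)\lambda_i$, so $t(e-\varepsilon(1))\lambda_i=\lambda(t)(e-\varepsilon(\alpha(t)))\lambda_i$), the vector $\dot{s}(e-\varepsilon(1))\lambda_i$ must be a ${\bf T}$-eigenvector of weight $\lambda^s$. Hence when we expand
$$\dot{s}(e-\varepsilon(1))\lambda_i=\sum_{j,\mu}g^i_{j,\mu}(e-\varepsilon(x_{j,\mu}))\lambda_j,$$
only those $\lambda_j$ with $\lambda_j$-weight equal to $\lambda^s$ can contribute; in particular the right-hand side lives in the isotypic piece $\bigoplus_{\mu:\;\mu=\lambda^s}\mathcal{S}_\mu^{n_\mu}$. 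Acting by $t\in{\bf T}$ and using $h(c)\varepsilon(x)h(c)^{-1}=\varepsilon(c^2x)$ exactly as in Lemma 4.3 gives, for every $y\in\bar{\mathbb{F}}_q^\ast$,
$$\dot{s}(e-\varepsilon(y))\lambda_i=\sum_{j,\mu}g^i_{j,\mu}(e-\varepsilon(y^{-1}x_{j,\mu}))\lambda_j.$$
Thus the action of $\dot{s}$ on each of these one-parameter families is completely determined by the finitely many constants $g^i_{j,\mu}$.

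Now I would run the braid/commutation identity $\dot{s}\varepsilon(z)\dot{s}=\varepsilon(-z^{-1})\dot{s}\,h(-z)\,\varepsilon(-z^{-1})$ against the vectors $(e-\varepsilon(1))\lambda_i$, reproducing verbatim the computation of equations (4.1)--(4.3) in Lemma 4.3 but now with the extra index $\lambda$; the character $\lambda$ enters only through the scalar $\lambda\bigl(h(-z)\bigr)$ produced by the middle $h(-z)$, and matching ${\bf T}$-weights on both sides forces all surviving terms to lie in a single isotypic block, so the argument never mixes different $\lambda$'s and the identity to be violated has the same structure as (4.3). As in the proof of Lemma 4.3, one checks that the resulting equality of rational functions in $z$ cannot hold identically — comparing, say, the behaviour of the two sides as $z$ ranges over $\bar{\mathbb{F}}_q^\ast$ (the left side has ``poles'' at $z=-x_{j,\mu}$ coming from $\varepsilon((x_{j,\mu}+z)^{-1}x_{k,\nu})$, whereas the right side has its only singularity at $z=1$, and these cannot be reconciled unless all the $g$'s vanish, which contradicts $\dot{s}$ being invertible). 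This contradiction shows no such $\Bbbk{\bf G}$-structure exists.

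The one genuinely new point compared with Lemma 4.3 — and the step I expect to require the most care — is the bookkeeping that guarantees the $\lambda$'s really do not interfere: one must verify that applying $\dot{s}$ and then the commutation relation keeps everything inside one ${\bf T}$-isotypic component, so that the single-weight computation of Lemma 4.3 applies block by block. Once that is in place, the rest is the verbatim observation, already made after (4.3), that the displayed identity in $z$ fails; I would simply remark ``using the same discussion as in Lemma 4.3'' (as the statement itself suggests) rather than rewriting the computation, and conclude that $\sum_\lambda\mathcal{S}_\lambda^{n_\lambda}$ admits no $\Bbbk{\bf G}$-module structure.
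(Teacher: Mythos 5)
Your overall route — reduce to Lemma \ref{Notmodule} block by block — is what the paper itself gestures at, but the bookkeeping step you single out as ``the step I expect to require the most care'' is not correctly justified, and as written it is false. You assert that $(e-\varepsilon(1))\lambda_i$ ``has weight $\lambda$ under ${\bf T}$'' and deduce that $\dot{s}(e-\varepsilon(1))\lambda_i$ must be a ${\bf T}$-eigenvector of weight $\lambda^s$, hence lands in a single isotypic piece. But your own parenthetical computation $t(e-\varepsilon(1))\lambda_i=\lambda(t)(e-\varepsilon(\alpha(t)))\lambda_i$ shows this vector is \emph{not} a ${\bf T}$-eigenvector; in fact $\mathcal{S}_\lambda$ contains no ${\bf T}$-eigenvectors at all, since the only ${\bf T}$-eigenvector in $\operatorname{Ind}_{\bf T}^{\bf B}\Bbbk_\lambda$ is $\Bbbk{\bf 1}_\lambda$, which lies outside $\mathcal{S}_\lambda$.

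The deeper problem is that ${\bf T}$-weight considerations cannot isolate the blocks the way you want. As a ${\bf T}$-module, $\mathcal{S}_\lambda\cong\Bbbk_\lambda\otimes\Bbbk[\bar{\mathbb{F}}_q^\ast]$ with ${\bf T}$ acting on $\Bbbk[\bar{\mathbb{F}}_q^\ast]$ through $\alpha$; since $\ker\alpha=\{\pm1\}$, this is $\operatorname{Ind}_{\{\pm1\}}^{\bf T}\Bbbk_{\lambda|_{\{\pm1\}}}$, so $\mathcal{S}_\lambda\cong\mathcal{S}_\mu$ as ${\bf T}$-modules whenever $\lambda(-1)=\mu(-1)$, and $\operatorname{Hom}_{\bf T}(\mathcal{S}_\lambda,\mathcal{S}_\mu)\neq 0$ exactly in that case. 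So the component maps $\pi_{\mu,j}\circ\dot{s}|_{\mathcal{S}_\lambda^{(i)}}$, which are only (twisted) ${\bf T}$-module maps and not ${\bf B}$-module maps, can \emph{a priori} be nonzero across all blocks with $\mu(-1)=\lambda(-1)$. Thus the reduction ``never mixes different $\lambda$'s'' is unproven, and the passage to the single-weight identity of Lemma \ref{Notmodule} does not go through as stated. A complete argument must either rule out this mixing by some other mechanism, or redo the computation (4.1)--(4.3) allowing the expansion of $\dot{s}(e-\varepsilon(1))\lambda_i$ to range over all $\mu$ with $\mu(-1)=\lambda(-1)$, in which case the analogue of (4.3) picks up extra character factors $\lambda_i(h(-z))$ and $(\lambda_j^s)^{-1}\lambda_k(\cdot)$ whose $z$-dependence must be shown to be incompatible; this is the substantive content the paper leaves to the reader, and your proposal does not supply it.
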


\begin{theorem} \label{abeliancat}\normalfont
The category $\mathscr{C}(\bf G)$ is an abelian category.
\end{theorem}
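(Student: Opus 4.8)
The plan is to verify the Grothendieck axioms for an abelian category, the only nontrivial point being that $\mathscr{C}(\mathbf G)$ is closed under kernels and cokernels of $\Bbbk\mathbf G$-morphisms; additivity (existence of finite biproducts) is immediate since the condition $(\star)$ is visibly stable under finite direct sums, and $\Bbbk\mathbf G$-Mod is already abelian so morphisms compose correctly and every mono/epi is normal. So the real content is: given $f\colon M\to N$ with $M,N\in\mathscr{C}(\mathbf G)$, show $\ker f$ and $\operatorname{coker} f=N/\operatorname{im} f$ again satisfy $(\star)$. First I would record the elementary observation that $(\star)$ forces good behaviour of the $\mathbf T$-action: for $M\in\mathscr{C}(\mathbf G)$ one has $M=\bigoplus_{i}\Bbbk\mathbf U\xi_i$ as $\Bbbk\mathbf B$-modules with $\xi_i$ a $\mathbf T$-eigenvector, and one should first check that $\Bbbk\mathbf U\xi_i$, as a $\Bbbk\mathbf B$-module, is either one-dimensional (when $\mathbf U$ fixes $\xi_i$) or isomorphic to the full induced module $\operatorname{Ind}_{\mathbf T}^{\mathbf B}\Bbbk_{\lambda_i}$, where $\lambda_i$ is the eigenvalue; indeed $\Bbbk\mathbf U\xi_i$ is a quotient of $\operatorname{Ind}_{\mathbf T}^{\mathbf B}\Bbbk_{\lambda_i}$, and by \cite[Theorem 2.5]{CD4} the only proper quotient of the latter is $\Bbbk_{\lambda_i}$ itself. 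Hence every $M\in\mathscr{C}(\mathbf G)$ decomposes as a $\Bbbk\mathbf B$-module into a direct sum of finitely many copies of $\Bbbk_{\lambda}$'s and finitely many $\operatorname{Ind}_{\mathbf T}^{\mathbf B}\Bbbk_{\lambda}$'s, and $\mathfrak d(M)$ equals the total number of such summands.

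Next I would analyze $\Bbbk\mathbf B$-submodules and quotients of such an $M$. The key linear-algebra input is the structure of $\Bbbk\mathbf B$-submodules of a finite direct sum $\bigoplus_i\operatorname{Ind}_{\mathbf T}^{\mathbf B}\Bbbk_{\lambda_i}\oplus\bigoplus_j\Bbbk_{\mu_j}$: any such submodule $K$ is $\mathbf T$-stable, hence splits according to weights; within a single weight $\lambda$, the $\lambda$-isotypic $\Bbbk\mathbf B$-subobjects of a direct sum of copies of $\operatorname{Ind}_{\mathbf T}^{\mathbf B}\Bbbk_\lambda$ and of $\Bbbk_\lambda$ are controlled by the socle filtration (socle $=$ sum of the $\mathcal S_\lambda$'s and the trivial $\Bbbk_\lambda$'s). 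The upshot I would aim to prove is a \emph{dichotomy/stability lemma}: a $\Bbbk\mathbf B$-submodule $K\subseteq M$ with $M\in\mathscr{C}(\mathbf G)$ again satisfies $(\star)$ precisely when it does not "contain a copy of $\mathcal S_\lambda$ as a $\Bbbk\mathbf B$-direct summand without the companion line" — and this is exactly where Proposition \ref{Notmodule2} enters: if $K$ were a $\Bbbk\mathbf G$-submodule that, as $\Bbbk\mathbf B$-module, had a summand of the form $\sum_\lambda\mathcal S_\lambda^{n_\lambda}$ (with some $n_\lambda>0$ and nothing attached), Proposition \ref{Notmodule2} would say it cannot carry a $\Bbbk\mathbf G$-structure, a contradiction. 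So for a genuine $\Bbbk\mathbf G$-submodule $K=\ker f$, the $\Bbbk\mathbf B$-decomposition of $K$ cannot have such "bare $\mathcal S_\lambda$" summands, forcing $K$ to be a finite sum of lines $\Bbbk_\mu$ and full induced pieces $\operatorname{Ind}_{\mathbf T}^{\mathbf B}\Bbbk_\lambda$, i.e. $K\in\mathscr{C}(\mathbf G)$; and by the same token the complement giving $\operatorname{coker}$ is of the same shape, so $N/\operatorname{im} f\in\mathscr{C}(\mathbf G)$ as well. Finally $\mathfrak d$ is finite throughout because it is additive on these $\Bbbk\mathbf B$-decompositions and bounded by $\mathfrak d(M)+\mathfrak d(N)$.

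Concretely the steps, in order: (i) prove the $\Bbbk\mathbf B$-normal-form lemma for objects of $\mathscr{C}(\mathbf G)$ using \cite[Theorem 2.5]{CD4}; (ii) for $f\colon M\to N$, note $\operatorname{im} f$ is a $\Bbbk\mathbf G$-submodule of $N$ and a $\Bbbk\mathbf G$-quotient of $M$, and analyze its $\Bbbk\mathbf B$-structure; (iii) invoke Proposition \ref{Notmodule2} to rule out bare $\mathcal S_\lambda$-summands in any $\Bbbk\mathbf G$-submodule, concluding that $\ker f$, $\operatorname{im} f$, $\operatorname{coker} f$ each have $\Bbbk\mathbf B$-normal form of the $(\star)$-type; (iv) check $\mathfrak d<\infty$ by additivity; (v) assemble: $\mathscr{C}(\mathbf G)$ is a full additive subcategory of the abelian category $\Bbbk\mathbf G$-Mod closed under kernels and cokernels, with every mono a kernel and every epi a cokernel (inherited), hence abelian. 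The main obstacle I expect is step (iii) — precisely, controlling which $\Bbbk\mathbf B$-submodules of a multiplicity-$>1$ sum $\operatorname{Ind}_{\mathbf T}^{\mathbf B}\Bbbk_\lambda^{\oplus r}$ can occur and showing that a $\mathbf G$-equivariant one must be a "diagonal" sum of full induced modules plus lines, rather than some twisted sub with bare socle pieces; Proposition \ref{Notmodule2} is the tool, but one has to set up the bookkeeping (weight-by-weight reduction to a fixed $\lambda$, then socle analysis) carefully so that it applies to the actual $\Bbbk\mathbf B$-summand structure rather than just to abstract $\mathcal S_\lambda^{n_\lambda}$.
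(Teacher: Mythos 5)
Your proposal follows the paper's proof: both use \cite[Theorem 2.5]{CD4} to pin down the $\Bbbk\mathbf B$-structure of any $M\in\mathscr{C}(\mathbf G)$ as a finite direct sum of lines $\Bbbk_\lambda$ and full induced modules $\operatorname{Ind}_{\mathbf T}^{\mathbf B}\Bbbk_\lambda$, and then invoke Proposition~\ref{Notmodule2} to rule out bare $\mathcal S_\lambda$-summands in $\Bbbk\mathbf G$-subquotients, from which closure under kernels and cokernels (hence abelianness) follows. The socle/weight bookkeeping you single out at step~(iii) is precisely what the paper compresses into ``it is easy to see,'' after first recording that $M$ has finite length and that its simple quotients lie in $\operatorname{Irr}(\mathbf G)$; so the two arguments coincide in substance.
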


\begin{proof} Recall the definition of $\mathscr{C}(\bf G)$ in Section 1, it is enough to show that $\mathscr{C}(\bf G)$ is closed under taking subquotients. Let $M$ be a $\Bbbk {\bf G}$-module with ${\bf T}$-eigenvectors $\xi_1, \xi_2, \dots, \xi_n$ such that $M\cong \displaystyle  \bigoplus_{i=1}^n \Bbbk {\bf U} \xi_i$ as $\Bbbk {\bf B}$-modules.
Following \cite[Theorem 2.5]{CD4}, when $\xi\in M_{\theta}$,  each $ \Bbbk {\bf U} \xi$ is isomorphic to $\text{Ind}_{\bf T}^{\bf B} \Bbbk_{\theta}$ or $\Bbbk_{\theta}$ . Thus $M$ is a $\Bbbk {\bf G}$-module of finite length. Let $N$ be a simple quotient of $M$, then $N$ has a  ${\bf T}$-eigenvector which implies $N\in \text{Irr}({\bf G})$. The Steinberg module $\text{St}$ is isomorphic to $\text{Ind}_{\bf T}^{\bf B} \Bbbk_{\text{tr}}$ as $\Bbbk {\bf B}$-modules and $ \mathbb{M}(\theta)$ is isomorphic to
$\Bbbk_{\theta} \oplus \text{Ind}_{\bf T}^{\bf B} \Bbbk_{\theta^s}$ as $\Bbbk {\bf B}$-modules. However since $\displaystyle \bigoplus_{\lambda \in \widehat{\bf T}}\mathcal{S_{\lambda}}^{n_{\lambda}}$ can not assign a $\Bbbk {\bf G}$-module structure by Proposition \ref{Notmodule2}, it is easy to see that the subquotients of $M$ are also in  $\mathscr{C}(\bf G)$. The theorem is proved.
\end{proof}

\begin{corollary} \label{finitelength}\normalfont
The category  $\mathscr{C}(\bf G)$ is noetherian and artinian.
\end{corollary}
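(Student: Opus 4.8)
The plan is to deduce Corollary \ref{finitelength} directly from Theorem \ref{abeliancat} together with the structural facts established in its proof, without any new computation. The key observation is that every object of $\mathscr{C}(\bf G)$ has finite length as a $\Bbbk{\bf G}$-module, so noetherianity and artinianity are automatic.

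First I would recall the argument already present in the proof of Theorem \ref{abeliancat}: if $M\in\mathscr{C}(\bf G)$ with ${\bf T}$-eigenvectors $\xi_1,\dots,\xi_n$ such that $M\cong\bigoplus_{i=1}^n\Bbbk{\bf U}\xi_i$ as $\Bbbk{\bf B}$-modules, then each $\Bbbk{\bf U}\xi_i$ is, by \cite[Theorem 2.5]{CD4}, isomorphic either to $\op{Ind}_{\bf T}^{\bf B}\Bbbk_\theta$ or to $\Bbbk_\theta$ for the relevant $\theta$. In particular $\dim_\Bbbk M^{\bf T}=\mathfrak{d}(M)=n<\infty$. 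Now for any $\Bbbk{\bf G}$-submodule $N\subseteq M$, every nonzero quotient of $N$ and every nonzero subobject of $N$ again lies in $\mathscr{C}(\bf G)$ by Theorem \ref{abeliancat}, hence also has finitely many linearly independent ${\bf T}$-eigenvectors; and the functor sending a module to the span of its ${\bf T}$-eigenvectors is additive on short exact sequences of objects of $\mathscr{C}(\bf G)$ (since the relevant $\Bbbk{\bf B}$-module summands each contribute a one-dimensional ${\bf T}$-eigenspace, and the eigenspace decomposition is preserved because ${\bf T}$ is linearly reductive over a field of characteristic $0$). Consequently, in any chain of subobjects $0=M_0\subsetneq M_1\subsetneq M_2\subsetneq\cdots$ inside $M$, the quantities $\mathfrak{d}(M_i)$ are strictly increasing nonnegative integers bounded above by $\mathfrak{d}(M)=n$, so the chain terminates; the same bound applied to descending chains gives the artinian property.

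Alternatively, and even more cleanly, one can phrase it via composition series: by the proof of Theorem \ref{abeliancat}, $M$ has finite length as a $\Bbbk{\bf G}$-module, with all composition factors lying in the finite list $\text{Irr}({\bf G})=\{\Bbbk_{\text{tr}},\text{St},\mathbb{M}(\theta)\}$. A module of finite length in an abelian category is both noetherian and artinian, and both properties pass to subobjects and quotients, hence hold for every object of $\mathscr{C}(\bf G)$. I would present this version as the main line of argument, since $\mathscr{C}(\bf G)$ has already been shown abelian and every object finite length.

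I do not anticipate a genuine obstacle here: the only point requiring a sentence of care is that finite length really does follow for \emph{every} object and not just for $M$ itself as generated — but this is exactly what the displayed argument in Theorem \ref{abeliancat} gives, since a $\Bbbk{\bf B}$-module that is a finite direct sum of copies of $\op{Ind}_{\bf T}^{\bf B}\Bbbk_\theta$ and $\Bbbk_\theta$ has finite length as a $\Bbbk{\bf B}$-module, hence $M$ has finite length as a $\Bbbk{\bf G}$-module. Once that is in hand, the corollary is the standard fact that an abelian category in which every object has finite length is both noetherian and artinian. So the proof I would write is short: invoke Theorem \ref{abeliancat} for abelianness, invoke its proof for finite length of every object, and conclude.
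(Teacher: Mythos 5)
Your proposal is correct and matches the paper's (implicit) reasoning: the corollary is stated without proof precisely because the finite-length observation — ``Thus $M$ is a $\Bbbk{\bf G}$-module of finite length'' — already appears in the proof of Theorem \ref{abeliancat}, and an abelian category all of whose objects have finite length is automatically noetherian and artinian. Your alternative argument via strict monotonicity of $\mathfrak{d}$ along proper inclusions is also sound (relying on exactness of $M\mapsto M^{\bf T}$ and $\mathfrak{d}\geq 1$ for nonzero objects of $\mathscr{C}({\bf G})$), but it is not needed once finite length is in hand.
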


A $\Bbbk {\bf G}$-module can be regarded as a $\Bbbk {\bf T}$-module (resp. $\Bbbk {\bf B}$-module) naturally. We denote by $\mathscr{C}(\bf G)_{\bf T}$ (resp. $\mathscr{C}(\bf G)_{\bf B}$)  the full subcategory of $\Bbbk {\bf T}$-Mod (resp. $\Bbbk {\bf B}$-Mod), which  consists of the objects in $\mathscr{C}(\bf G)$.

\begin{corollary} \normalfont
One has that
$\text{Hom}_{\bf T}(\Bbbk_{\theta}, -): \mathscr{C}(\bf G)_{\bf T}\rightarrow \text{Vect} $ is a exact functor for any $\theta\in \widehat{\bf T}$. Thus given a short exact sequence
$$0 \longrightarrow M_1  \longrightarrow M  \longrightarrow M_2  \longrightarrow 0$$
in the category $\mathscr{C}(\bf G)$, one has that $M^{\bf T} \cong M^{\bf T}_1 \oplus M^{\bf T}_2$ as ${\bf T}$-modules. In particular, we have $\text{Wt}(M)=\text{Wt}(M_1)\cup  \text{Wt}(M_2).$
\end{corollary}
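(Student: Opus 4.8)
The plan is to derive everything from the exactness of the functor $\operatorname{Hom}_{\bf T}(\Bbbk_\theta,-)$ on $\mathscr{C}({\bf G})_{\bf T}$, so I would establish that first. The key structural fact is that every object $M$ of $\mathscr{C}({\bf G})$, regarded as a $\Bbbk{\bf T}$-module, decomposes as $M^{\bf T}\oplus M'$ where $M^{\bf T}=\bigoplus_{\lambda\in\widehat{\bf T}}M_\lambda$ is finite-dimensional and $M'$ carries no nonzero ${\bf T}$-eigenvectors. Indeed, by the definition ($\star$) of $\mathscr{C}({\bf G})$ together with \cite[Theorem 2.5]{CD4}, $M\cong\bigoplus_{i=1}^n\Bbbk{\bf U}\xi_i$ as $\Bbbk{\bf B}$-modules, and each summand $\Bbbk{\bf U}\xi_i$ is isomorphic as a $\Bbbk{\bf T}$-module to either $\Bbbk_\theta$ (a single eigenline) or $\operatorname{Ind}_{\bf T}^{\bf B}\Bbbk_\theta$; in the latter case, restricting to ${\bf T}$, one has $\operatorname{Ind}_{\bf T}^{\bf B}\Bbbk_\theta\cong\Bbbk_\theta\oplus\mathcal{S}_\theta$ where $\mathcal{S}_\theta$ has no nonzero ${\bf T}$-eigenvector. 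Hence for any $\theta$, $\operatorname{Hom}_{\bf T}(\Bbbk_\theta,M)\cong M_\theta$ naturally, and since $\Bbbk$ has characteristic $0$ the eigenspace decomposition is a direct sum complement; this gives a functorial splitting $M\simeq M^{\bf T}\oplus M'$ in $\mathscr{C}({\bf G})_{\bf T}$ with $\operatorname{Hom}_{\bf T}(\Bbbk_\theta,M')=0$ for all $\theta$.

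Next I would prove exactness. Given a short exact sequence $0\to M_1\to M\to M_2\to 0$ in $\mathscr{C}({\bf G})$, it is a short exact sequence of $\Bbbk{\bf T}$-modules. Applying $\operatorname{Hom}_{\bf T}(\Bbbk_\theta,-)$ is always left exact, so it suffices to check right exactness, i.e.\ that $M_\theta\to (M_2)_\theta$ is surjective. Take $\bar\xi\in (M_2)_\theta$ and any preimage $\xi\in M$; decompose $\xi=\sum_\lambda \xi_\lambda + \xi'$ according to $M\simeq M^{\bf T}\oplus M'$. The image of $\xi'$ in $M_2$ lies in $M_2'$ (since the splitting is functorial, the quotient map sends $M'$ into $M_2'$), and the image of $\xi_\lambda$ lies in $(M_2)_\lambda$; comparing ${\bf T}$-weight components forces the image of $\xi_\theta$ to equal $\bar\xi$. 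Thus $\xi_\theta\in M_\theta$ maps onto $\bar\xi$, proving surjectivity, hence exactness of the functor. Equivalently and more cleanly: the functorial splitting shows $M^{\bf T}$ and $(-)'$ are each additive endofunctors on $\mathscr{C}({\bf G})_{\bf T}$ that are direct summands of the identity, and $(-)^{\bf T}$ restricted to such sequences is split exact.

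From exactness the remaining assertions follow formally. Applying the exact functor $(-)^{\bf T}=\bigoplus_\theta\operatorname{Hom}_{\bf T}(\Bbbk_\theta,-)$ (here using that all but finitely many weight spaces vanish, by $\mathfrak d(M)<\infty$ in ($\star$), so the sum is finite) to $0\to M_1\to M\to M_2\to 0$ yields a short exact sequence $0\to M_1^{\bf T}\to M^{\bf T}\to M_2^{\bf T}\to 0$ of finite-dimensional ${\bf T}$-modules; being a short exact sequence of ${\bf T}$-modules over a characteristic-$0$ field (or simply a short exact sequence of vector spaces equipped with the induced ${\bf T}$-action) it splits, so $M^{\bf T}\cong M_1^{\bf T}\oplus M_2^{\bf T}$ as ${\bf T}$-modules. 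Taking $\theta$-components gives $M_\theta\cong (M_1)_\theta\oplus(M_2)_\theta$ for every $\theta$, whence $M_\theta\neq 0$ if and only if $(M_1)_\theta\neq 0$ or $(M_2)_\theta\neq 0$; this is exactly $\operatorname{Wt}(M)=\operatorname{Wt}(M_1)\cup\operatorname{Wt}(M_2)$.

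The only genuinely delicate point is the functoriality of the splitting $M\simeq M^{\bf T}\oplus M'$ — specifically, that a $\Bbbk{\bf G}$-morphism in $\mathscr{C}({\bf G})$ restricted to ${\bf T}$ respects the decomposition into "eigenvector part" and "eigenvector-free part." This is automatic once one knows $\operatorname{Hom}_{\bf T}(\Bbbk_\theta,-)$ only sees $M_\theta$: any $\Bbbk{\bf T}$-map sends an eigenvector of eigenvalue $\theta$ to an eigenvector of eigenvalue $\theta$, so eigenvectors go to eigenvectors, and by the characteristic-$0$ eigenspace decomposition the complement $M'=\bigcap_\theta\ker\big(M\to M/\!\!\sum_{\lambda\neq\theta}\text{\,...}\big)$ can be characterized purely in terms of ${\bf T}$-action as the (unique $\Bbbk{\bf T}$-complement which is) sum of all nontrivial Jordan-type ${\bf T}$-submodules having no eigenline; in our situation $M'$ is a direct sum of copies of the $\mathcal{S}_\lambda$, none of which contains an eigenvector, so it is preserved by any $\Bbbk{\bf T}$-morphism of objects of $\mathscr{C}({\bf G})_{\bf T}$. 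I would therefore isolate this as a short lemma ("for $M,N\in\mathscr{C}({\bf G})$ and $f:M\to N$ a $\Bbbk{\bf T}$-map, $f(M^{\bf T})\subseteq N^{\bf T}$ and $f(M')\subseteq N'$") before assembling the corollary.
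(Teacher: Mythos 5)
Your route is genuinely different from the paper's. The paper's proof is short and relies on Farkas--Snider: over a field of characteristic $0$, $\Bbbk{\bf B}$ is a locally Wedderburn algebra, so $\Bbbk_\theta$ is an \emph{injective} $\Bbbk{\bf B}$-module and $\operatorname{Hom}_{\bf B}(-,\Bbbk_\theta)$ is exact; one then observes $\operatorname{Hom}_{\bf T}(\Bbbk_\theta,M)\cong\operatorname{Hom}_{\bf B}(M,\Bbbk_\theta)$ for $M\in\mathscr{C}({\bf G})$ (a dimension count via Frobenius reciprocity and the ${\bf B}$-decomposition of $M$), and left-exactness of $(-)_\theta$ plus additivity of the right-hand side finishes it. You instead try to manufacture a functorial splitting $M\simeq M^{\bf T}\oplus M'$ in $\mathscr{C}({\bf G})_{\bf T}$. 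Your structural description of $M|_{\bf T}$ is correct, and $f(M^{\bf T})\subseteq N^{\bf T}$ is automatic, but the companion claim that every $\Bbbk{\bf T}$-morphism sends $M'$ into $N'$ is \emph{false}, and this is the load-bearing step. The point is that $\operatorname{Hom}_{\bf T}(\mathcal{S}_\lambda,\Bbbk_\lambda)\neq 0$: twisting by $\lambda^{-1}$, the ${\bf T}$-action on $\mathcal{S}_\lambda$ is the permutation action of $\bar{\mathbb{F}}_q^\times$ (via $\alpha$) on a basis indexed by $\bar{\mathbb{F}}_q^\times$, and the augmentation functional is a nonzero ${\bf T}$-equivariant map to the trivial module. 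Concretely, $f(\eta_s):=0$, $f\big((\varepsilon(x)-e)\eta_s\big):=1$ defines a $\Bbbk{\bf T}$-map $\text{St}\to\Bbbk_{\text{tr}}$, both objects of $\mathscr{C}({\bf G})$, which carries $\text{St}'=\mathcal{S}_{\text{tr}}$ onto $\Bbbk_{\text{tr}}\neq(\Bbbk_{\text{tr}})'=0$. For the same reason your proposed intrinsic characterization of $M'$ fails: the graph of such a ${\bf T}$-map $\mathcal{S}_\lambda\to\Bbbk\eta$ is an eigenvector-free ${\bf T}$-submodule of $\operatorname{Ind}_{\bf T}^{\bf B}\Bbbk_\lambda$ not contained in $\mathcal{S}_\lambda$, and together with $\mathcal{S}_\lambda$ it already spans the whole module, so ``sum of eigenvector-free ${\bf T}$-submodules'' does not single out $M'$.

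The fix is to replace ``$\Bbbk{\bf T}$-morphism'' by ``$\Bbbk{\bf B}$-morphism'' in your lemma and to use simplicity of $\mathcal{S}_\lambda$ over $\Bbbk{\bf B}$ (from \cite[Theorem 2.5]{CD4}) rather than mere absence of eigenlines. Then $f(\mathcal{S}_\lambda)$ is either $0$ or a simple ${\bf B}$-submodule of $N$ isomorphic to $\mathcal{S}_\lambda$; since the ${\bf B}$-socle of $N$ is $\bigoplus\Bbbk_{\mu_j}\oplus\bigoplus\mathcal{S}_{\mu_j}$ and $\mathcal{S}_\lambda\not\cong\Bbbk_\mu$, $\mathcal{S}_\lambda\not\cong\mathcal{S}_\mu$ ($\mu\neq\lambda$), the image lands in $\bigoplus\mathcal{S}_{\mu_j}=N'$, and your weight-comparison argument then goes through. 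This establishes exactness on ${\bf B}$-short exact sequences of objects in $\mathscr{C}({\bf G})$, which is all the second and third sentences of the corollary actually need; it does not establish the literal first sentence (exactness on all of $\mathscr{C}({\bf G})_{\bf T}$), but note the paper's own proof also really only proves exactness through the ${\bf B}$-structure.
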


\begin{proof} Using the setting  in \cite[Section 2]{FS}, when $\Bbbk$ is an algebraically closed field of characteristic zero, $\Bbbk {\bf B}$ is a locally Wedderburn algebra. Thus by \cite[Lemma 3]{FS}, $\Bbbk_{\theta}$ is an injective $\Bbbk {\bf B}$-module which implies the exactness of $\text{Hom}_{\bf B}(-, \Bbbk_{\theta})$. For $M\in \mathscr{C}(\bf G)$, we have $M\cong \displaystyle  \bigoplus_{i=1}^n \Bbbk {\bf U} \xi_i$ as $\Bbbk {\bf B}$-modules, where $\xi_i \in M_{\lambda_i}$. By \cite[Theorem 2.5]{CD4}, the induced module $\text{Ind}_{\bf T}^{\bf B} \Bbbk_{\theta}$ has a unique simple $\Bbbk {\bf B}$-submodule $\mathcal{S}_{\theta}$ and the corresponding quotient module is $\Bbbk_{\theta}$. Thus each $\Bbbk {\bf U} \xi_i$ is isomorphic to $\text{Ind}_{\bf T}^{\bf B} \Bbbk_{\lambda_i}$ or $\Bbbk_{\lambda_i}$ as $\Bbbk {\bf B}$-modules. Therefore it is easy to see that
$$ \text{Hom}_{\bf T} (\Bbbk_{\theta}, M) \cong \text{Hom}_{\bf B} (M, \Bbbk_{\theta}),$$
which implies the exactness of $\text{Hom}_{\bf T}(\Bbbk_{\theta}, -)$. The rest part is obvious.
\end{proof}

\begin{corollary} \label{Bsplit}\normalfont
Given a short exact sequence in $\mathscr{C}(\bf G)$,  then it is split  regarded as a short exact sequence in $\mathscr{C}(\bf G)_{\bf B}$.

\end{corollary}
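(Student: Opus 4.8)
The plan is to reduce the assertion to the statement that the given sequence is \emph{abstractly} split over $\Bbbk{\bf B}$ (i.e.\ that $M\cong M_1\oplus M_2$ as $\Bbbk{\bf B}$-modules), and then to split it by peeling off the indecomposable summands of $M_1$ one at a time. By condition ($\star$) and \cite[Theorem 2.5]{CD4} (cf.\ the proof of Theorem \ref{abeliancat}), every object of $\mathscr{C}({\bf G})$ is, as a $\Bbbk{\bf B}$-module, a finite direct sum of copies of the one-dimensional modules $\Bbbk_\theta$ and of the modules $\op{Ind}_{\bf T}^{\bf B}\Bbbk_\theta$; call such a $\Bbbk{\bf B}$-module \emph{standard} and write $N\cong\bigoplus_\theta\big(\Bbbk_\theta^{a_\theta(N)}\oplus(\op{Ind}_{\bf T}^{\bf B}\Bbbk_\theta)^{b_\theta(N)}\big)$ for the multiplicities of its indecomposable summands. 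Since $\op{Ind}_{\bf T}^{\bf B}\Bbbk_\theta$ is uniserial with socle $\mathcal{S}_\theta$ and top $\Bbbk_\theta$, a standard module has finite length over $\Bbbk{\bf B}$, its composition factors lie among the pairwise non-isomorphic simple modules $\Bbbk_\mu$ and $\mathcal{S}_\mu$, and one has $[N:\mathcal{S}_\theta]=b_\theta(N)$, $[N:\Bbbk_\theta]=a_\theta(N)+b_\theta(N)$; hence $N$ is determined up to $\Bbbk{\bf B}$-isomorphism by its composition multiplicities. Applying additivity of composition multiplicities to $0\to M_1\to M\to M_2\to 0$ gives $a_\theta(M)=a_\theta(M_1)+a_\theta(M_2)$ and $b_\theta(M)=b_\theta(M_1)+b_\theta(M_2)$, so $M\cong M_1\oplus M_2$ as $\Bbbk{\bf B}$-modules. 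It therefore suffices to prove: \emph{if $0\to N_1\xrightarrow{\iota}N\to N_2\to 0$ is exact with $N$ standard and $N\cong N_1\oplus N_2$, then $\iota$ is split.}

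I first claim that, for $N$ standard, every monomorphism $j\colon Y\hookrightarrow N$ with $Y\in\{\Bbbk_\theta,\ \op{Ind}_{\bf T}^{\bf B}\Bbbk_\theta\}$ is split. For $Y=\Bbbk_\theta$ this holds because $\Bbbk_\theta$ is an injective $\Bbbk{\bf B}$-module (as used in the proof of the previous corollary, via \cite[Lemma 3]{FS}). For $Y=\op{Ind}_{\bf T}^{\bf B}\Bbbk_\theta$, write $N=\bigoplus_k N_k$ with each $N_k$ a standard indecomposable. Frobenius reciprocity together with the fact that the only ${\bf T}$-weight of $\op{Ind}_{\bf T}^{\bf B}\Bbbk_\mu$ is $\mu$, with one-dimensional weight space (an easy computation using $h(c)\varepsilon(x)h(c)^{-1}=\varepsilon(c^2x)$ and the surjectivity of squaring on $\bar{\mathbb{F}}_q^*$), gives $\op{Hom}_{\bf B}(\op{Ind}_{\bf T}^{\bf B}\Bbbk_\theta,\Bbbk_\mu)=\delta_{\theta\mu}\Bbbk$ and $\op{Hom}_{\bf B}(\op{Ind}_{\bf T}^{\bf B}\Bbbk_\theta,\op{Ind}_{\bf T}^{\bf B}\Bbbk_\mu)=\delta_{\theta\mu}\Bbbk$; in particular $\op{End}_{\bf B}(\op{Ind}_{\bf T}^{\bf B}\Bbbk_\theta)=\Bbbk$, so every nonzero endomorphism of $Y$ is an isomorphism. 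Thus each component $j_k\colon Y\to N_k$ vanishes unless $N_k$ has highest weight $\theta$, the components into summands $N_k\cong\Bbbk_\theta$ factor through the quotient $Y\twoheadrightarrow\Bbbk_\theta$, and if all components into summands $N_k\cong\op{Ind}_{\bf T}^{\bf B}\Bbbk_\theta$ were zero then $j$ would factor through $\Bbbk_\theta$, contradicting injectivity of $j$. Hence some $j_{k_0}\colon Y\to N_{k_0}\cong\op{Ind}_{\bf T}^{\bf B}\Bbbk_\theta$ is an isomorphism, and the projection $N\to N_{k_0}$ composed with $j_{k_0}^{-1}$ retracts $j$.

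Now I induct on the number $k$ of indecomposable summands of $N_1$; the case $k=0$ is trivial. For $k\ge 1$ choose an indecomposable summand $Y$ of $N_1$, so $N_1=Y\oplus N_1'$. By the previous paragraph $\iota(Y)$ is a direct summand of $N$, say $N=\iota(Y)\oplus N^\sharp$; projecting along $N^\sharp$ shows $\iota(N_1)=\iota(Y)\oplus\big(\iota(N_1)\cap N^\sharp\big)$ with $\iota(N_1)\cap N^\sharp\cong N_1'$, and the original sequence restricts to $0\to N_1'\to N^\sharp\to N_2\to 0$. The summand $N^\sharp$ of the standard module $N$ is again standard, and Krull--Schmidt cancellation (each standard indecomposable has endomorphism ring $\Bbbk$, hence local) applied to $Y\oplus N^\sharp\cong N\cong N_1\oplus N_2\cong Y\oplus N_1'\oplus N_2$ yields $N^\sharp\cong N_1'\oplus N_2$. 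By the induction hypothesis the restricted sequence splits, and combining its splitting with $N=\iota(Y)\oplus N^\sharp$ produces a splitting of $0\to N_1\to N\to N_2\to 0$. This proves Corollary \ref{Bsplit}.

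The delicate step is the opening reduction: it uses the precise $\Bbbk{\bf B}$-structure of the objects of $\mathscr{C}({\bf G})$ and the fact that the $\mathcal{S}_\theta$ (and the $\Bbbk_\mu$) are pairwise non-isomorphic, and it genuinely exploits that $M$ is a $\Bbbk{\bf G}$-module and not merely a $\Bbbk{\bf B}$-module. Indeed there are non-split $\Bbbk{\bf B}$-extensions of standard modules whose middle term fails to be standard (for instance pushouts of $0\to\mathcal{S}_\theta\to\op{Ind}_{\bf T}^{\bf B}\Bbbk_\theta\to\Bbbk_\theta\to 0$), so the conclusion can fail for an arbitrary short exact sequence of standard $\Bbbk{\bf B}$-modules; membership of $M$ in $\mathscr{C}({\bf G})$ is exactly what rules these out.
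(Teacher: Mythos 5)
Your argument is correct in substance and, insofar as the paper's one-line proof (``By Proposition \ref{Notmodule2} and noting that the set of simple objects is $\text{Irr}({\bf G})$...'') can be unpacked, it is the same approach: reduce to the observation that, by Theorem \ref{abeliancat}, all three terms of the sequence are \emph{standard} $\Bbbk{\bf B}$-modules in your sense, and then split off indecomposable $\Bbbk{\bf B}$-summands of $M_1$ one at a time, using injectivity of $\Bbbk_\theta$ and the Hom computations
\[
\op{Hom}_{\bf B}(\op{Ind}_{\bf T}^{\bf B}\Bbbk_\theta,\Bbbk_\mu)=\delta_{\theta\mu}\Bbbk,\qquad
\op{Hom}_{\bf B}(\op{Ind}_{\bf T}^{\bf B}\Bbbk_\theta,\op{Ind}_{\bf T}^{\bf B}\Bbbk_\mu)=\delta_{\theta\mu}\Bbbk
\]
to see that any monomorphism from a standard indecomposable into a standard module is split. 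That core lemma, and the peeling-off induction, are sound. Two remarks on the surrounding scaffolding. First, the opening reduction to the \emph{abstract} isomorphism $M\cong M_1\oplus M_2$ is not actually needed: if one takes as inductive hypothesis only that $M_1$ and $M$ are standard, then after peeling off $Y$ one has $N^\sharp$ standard (direct summand of a finite direct sum of modules with local endomorphism rings, by Azumaya) and $N_1'$ standard, and the restricted sequence $0\to N_1'\to N^\sharp\to M_2\to 0$ already satisfies the hypothesis; the Krull--Schmidt cancellation step and the composition-multiplicity bookkeeping can be deleted outright. Second, and related: your multiplicity argument rests on the assertion that the socles $\mathcal{S}_\theta$ are pairwise non-isomorphic as $\Bbbk{\bf B}$-modules (so that $[N:\mathcal{S}_\theta]$ recovers $b_\theta(N)$). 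You state this but do not justify it, and it is not entirely obvious --- the $\mathcal{S}_\theta$ have no ${\bf T}$-weights, so one cannot separate them by inspecting weight spaces, and already $h(-1)$ acts the same way on $\mathcal{S}_\theta$ and $\mathcal{S}_\mu$ whenever $\theta(-1)=\mu(-1)$. This is exactly the kind of fact that the unnecessary reduction smuggles in; once you drop that reduction as above, the whole argument goes through without ever needing to distinguish the $\mathcal{S}_\theta$ from one another, only from the one-dimensional simples $\Bbbk_\mu$ (which is trivial by dimension). So the proof is correct and in the spirit of the paper, but it can and should be trimmed to avoid the unverified non-isomorphism claim.
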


\begin{proof} By the definition of $\mathscr{C}(\bf G)$ and Proposition \ref{Notmodule2}, noting that the set of simple objects in  the category $\mathscr{C}(\bf G)$ is
$$\text{Irr}({\bf G})= \{ \Bbbk_{\text{tr}}, \text{St}, \mathbb{M}(\theta)\mid \theta \in \widehat{{\bf T}} \ \text{is nontrivial}\},$$
the corollary is proved.
\end{proof}

\section{Highest weight category}

In this section, we show that $\mathscr{C}(\bf G)$ is a highest weight category for  ${\bf G} =SL_2(\bar{\mathbb{F}}_q)$. Firstly we recall the definition of highest weight category (see \cite{CPS}).

\begin{definition}\label{HWC} \normalfont

Let $\mathscr{C}$ be a locally artinian, abelian, $\Bbbk$-linear category with enough injective objects that satisfies Grothendieck's condition. Then we call $\mathscr{C}$ a highest weight category if there exists a locally finite poset $\Lambda$ (the ``weights" of $\mathscr{C}$), such that:

(a) There is a complete collection $\{S(\lambda)_{\lambda \in \Lambda}\}$ of non-isomorphic simple objects of $\mathscr{C}$ indexed by the set $\Lambda$.

(b) There is a collection $\{A(\lambda)_{\lambda \in \Lambda}\}$ of objects of $\mathscr{C}$ and, for each $\lambda$, an embedding $S(\lambda)\subset A(\lambda) $ such that all composition factors $S(\mu)$ of $A(\lambda)/S(\lambda)$ satisfy $\mu < \lambda$. For $\lambda,\mu \in \Lambda$,
we have that $\dim_{\Bbbk}\op{Hom}_{\mathscr{C}}(A(\lambda), A(\mu))$ and $[A(\lambda): S(\mu)]$ are finite.

(c) Each simple object $S(\lambda)$ has an injective envelope $\mathcal{I}(\lambda)$ in $\mathscr{C}$.
Also, $\mathcal{I}(\lambda)$  has a good filtration $0= F_0(\lambda)\subset F_1(\lambda)\subset \dots $ such that:

\noindent (i) $F_1(\lambda)\cong A(\lambda)$;

\noindent  (ii) for $n>1$, $F_n(\lambda)/F_{n-1}(\lambda) \cong A(\mu)$ for some $\mu=\mu(n)> \lambda$;

\noindent  (iii) for a given $\mu \in \Lambda$, $\mu=\mu(n)$ for only finitely many $n$;

\noindent  (iv) $\bigcup F_i(\lambda)= \mathcal{I}(\lambda)$.
\end{definition}

Actually, to show that $\mathscr{C}(\bf G)$ is a highest weight category, the main  difficulty is  to prove that the category $\mathscr{C}({\bf G})$ has enough injective objects.

\begin{proposition}  \label{zeroExt}\normalfont
 For any  $M, N$  in $\mathscr{C}(\bf G)$ such that $\text{Wt}(M) \cap \text{Wt}(N)=\emptyset$, we have $\text{Ext}^1_{\mathscr{C}(\bf G)}(M,N)=0$.
\end{proposition}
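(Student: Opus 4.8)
The plan is to show that any short exact sequence
$$0 \longrightarrow N \longrightarrow E \longrightarrow M \longrightarrow 0$$
in $\mathscr{C}(\bf G)$ with $\mathrm{Wt}(M)\cap\mathrm{Wt}(N)=\emptyset$ splits. First I would record the structural facts already available. By Theorem \ref{abeliancat} and Corollary \ref{finitelength} the category is closed under subquotients and every object has finite length, with composition factors drawn from $\mathrm{Irr}({\bf G})$. By Corollary \ref{Bsplit} the sequence splits over $\Bbbk{\bf B}$; fix a $\Bbbk{\bf B}$-linear splitting $\sigma\colon M\to E$, so that $E = N\oplus\sigma(M)$ as $\Bbbk{\bf B}$-modules. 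The obstruction to $\sigma$ being $\Bbbk{\bf G}$-linear is measured by the element $\dot s\sigma(m) - \sigma(\dot s m)\in N$ for $m\in M$ (since ${\bf G}$ is generated by ${\bf B}$ and $\dot s$); this gives a $\Bbbk{\bf B}$-cocycle and we want to correct $\sigma$ by a $\Bbbk{\bf B}$-linear map $M\to N$ to kill it.

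The key point is a weight argument. Because $E$ lies in $\mathscr{C}(\bf G)$, Corollary after Theorem \ref{abeliancat} gives $E^{\bf T}\cong N^{\bf T}\oplus M^{\bf T}$ and $\mathrm{Wt}(E)=\mathrm{Wt}(N)\sqcup\mathrm{Wt}(M)$ is a disjoint union by hypothesis. I would use this to analyze $\mathrm{Hom}_{\bf B}(M,N)$ and the cocycle: since each $\Bbbk{\bf U}\xi$ summand of $M$ is, by \cite[Theorem 2.5]{CD4}, either $\Bbbk_\theta$ or $\mathrm{Ind}_{\bf T}^{\bf B}\Bbbk_\theta$ with $\theta\in\mathrm{Wt}(M)$, and similarly for $N$ with weights in $\mathrm{Wt}(N)$, the disjointness of the weight sets forces strong constraints on $\Bbbk{\bf B}$-homomorphisms between them. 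Concretely, a $\Bbbk{\bf B}$-map from an induced module $\mathrm{Ind}_{\bf T}^{\bf B}\Bbbk_\theta$ to a module all of whose ${\bf T}$-weights avoid $\theta$ must land in the submodule $\mathcal S_{\theta'}$-type pieces, but the weight bookkeeping (every $\Bbbk{\bf B}$-quotient of $\mathrm{Ind}_{\bf T}^{\bf B}\Bbbk_\theta$ has $\theta$ as a weight) should rule this out except in controlled ways, and similarly the only maps $\Bbbk_\theta\to N$ are zero. I would package this as: $\mathrm{Ext}^1_{\Bbbk{\bf B}}(M,N)=0$ already holds (Corollary \ref{Bsplit}), and moreover the relevant $\mathrm{Hom}$ and cocycle groups vanish or are computed by matching up weight spaces, so the single correction needed to make $\sigma$ commute with $\dot s$ exists.

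More efficiently, I expect the cleanest route is a dévissage on composition length combined with the observation that the simple objects have pairwise \emph{disjoint} weight sets: $\mathrm{Wt}(\Bbbk_{\mathrm{tr}})=\{\mathrm{tr}\}$, $\mathrm{Wt}(\mathrm{St})=\{\mathrm{tr}\}$, $\mathrm{Wt}(\mathbb M(\theta))=\{\theta,\theta^s\}$ — so actually $\Bbbk_{\mathrm{tr}}$ and $\mathrm{St}$ share the weight $\mathrm{tr}$, which is exactly why Lemma \ref{Notmodule} / Proposition \ref{Notmodule2} are needed. Hence I would reduce to the case $M=S$, $N=S'$ simple with $\mathrm{Wt}(S)\cap\mathrm{Wt}(S')=\emptyset$, show $\mathrm{Ext}^1=0$ there directly, and then run the long exact sequence in $\mathrm{Ext}^1_{\mathscr{C}(\bf G)}(-,N)$ and $\mathrm{Ext}^1_{\mathscr{C}(\bf G)}(M,-)$ along composition series of $M$ and $N$, noting that each composition factor of $M$ (resp. $N$) has its weights inside $\mathrm{Wt}(M)$ (resp. $\mathrm{Wt}(N)$) by the additivity of $\mathrm{Wt}$ on short exact sequences, so disjointness is inherited at every stage. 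For the simple case, the extension $E$ has $\mathrm{Wt}(E)=\mathrm{Wt}(S)\sqcup\mathrm{Wt}(S')$; one writes down the $\Bbbk{\bf B}$-splitting and checks that the ${\bf T}$-equivariance together with the explicit formula \eqref{sus=xsty} for $\dot s$ forces the defect to be a coboundary — this is the analogue of the computation in Lemma \ref{Notmodule}, where the point was precisely that a would-be $\Bbbk{\bf G}$-structure on a direct sum of $\mathcal S_\lambda$'s leads to an inconsistent identity in \eqref{Eq4.3}.

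The main obstacle, I expect, is the simple base case: verifying that the $\dot s$-cocycle obtained from the $\Bbbk{\bf B}$-splitting is a coboundary when $S,S'$ are the genuinely infinite-dimensional modules $\mathbb M(\theta)$, since there one must manipulate the explicit bases $\{u\dot w{\bf 1}_\theta\}$ and the relation \eqref{sus=xsty}, exactly as in the proofs of Theorem \ref{SimpleStaline} and Lemma \ref{Notmodule}. The disjoint-weight hypothesis is what makes the correction map $M\to N$ unique (hence well-defined on the whole module rather than just on generators), and I would lean on the exact functor $\mathrm{Hom}_{\bf T}(\Bbbk_\theta,-)$ from the corollary above to keep track of weight spaces cleanly throughout the dévissage.
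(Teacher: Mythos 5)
Your plan --- d\'evissage to simple objects, exploit the $\Bbbk{\bf B}$-splitting from Corollary \ref{Bsplit}, then check $\dot s$-compatibility case by case --- is exactly the paper's approach, and your remark that disjointness of $\text{Wt}$ descends to composition factors is the right justification for the reduction step (the paper states it without comment). Two small points of divergence are worth flagging. First, you expect the $\mathbb{M}(\theta)$-by-$\mathbb{M}(\lambda)$ case to require the heaviest cocycle manipulation; in fact the paper handles all of $\text{Ext}^1_{\mathscr{C}(\bf G)}(\mathbb{M}(\theta), S)$ at a stroke with a short Frobenius reciprocity argument --- a ${\bf U}$-fixed weight-$\theta$ vector in the middle term determines a $\Bbbk{\bf G}$-map out of the \emph{simple} module $\mathbb{M}(\theta)$, which is automatically injective and hence a splitting --- and no cocycle computation is needed there. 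The genuinely computational base case is $\text{Ext}^1_{\mathscr{C}(\bf G)}(\text{St}, \mathbb{M}(\mu))$, where one runs precisely the \eqref{sus=xsty}-style analysis you anticipate (see \eqref{Eq5.3}); the case $\text{Ext}^1_{\mathscr{C}(\bf G)}(\Bbbk_{\text{tr}}, \mathbb{M}(\lambda))$ is even easier since the one-dimensional $M_{\text{tr}}$ is $\dot s$-stable and \eqref{sus=xsty} forces $\dot s$ to act there as the identity. Second, your invocation of Lemma \ref{Notmodule} / Proposition \ref{Notmodule2} when observing that $\Bbbk_{\text{tr}}$ and $\text{St}$ share the weight $\text{tr}$ is misplaced: those results serve Theorem \ref{abeliancat} (closure under subquotients), while here the shared weight simply puts the pair $(\Bbbk_{\text{tr}},\text{St})$ outside the hypothesis of the present proposition --- rightly so, since $\text{Ext}^1_{\mathscr{C}(\bf G)}(\Bbbk_{\text{tr}},\text{St})\cong\Bbbk$ is nonzero, as computed in Proposition \ref{injectivemodule}. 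Neither point affects the soundness of your overall plan.
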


\begin{proof} Since each object in $\mathscr{C}(\bf G)$  is of finite length, it is enough to show that
$\text{Ext}^1_{\mathscr{C}(\bf G)}(M,N)=0$ for any simple object $M,N \in \mathscr{C}(\bf G)$, where $\text{Wt}(M) \cap \text{Wt}(N)=\emptyset$. Recall that $\text{Irr}({\bf G})= \{ \Bbbk_{\text{tr}}, \text{St}, \mathbb{M}(\theta)\mid \theta \in \widehat{{\bf T}} \ \text{is nontrivial}\}$ is the set of simple objects in $\mathscr{C}(\bf G)$,  we will show that
$$\text{Ext}^1_{\mathscr{C}(\bf G)}(\Bbbk_{\text{tr}},\mathbb{M}(\lambda))=0,\  \text{Ext}^1_{\mathscr{C}(\bf G)}(\text{St},\mathbb{M}(\mu))=0 \ \text{and}\ \text{Ext}^1_{\mathscr{C}(\bf G)}( \mathbb{M}(\theta),S)=0,$$
where $S$ is a simple object whose weights are different with $\theta$ and $\theta^s$.

\smallskip
\noindent (1) $\text{Ext}^1_{\mathscr{C}(\bf G)}(\Bbbk_{\text{tr}},\mathbb{M}(\lambda))=0$.
Suppose we have a short exact sequence
\begin{align} \label{SES1}
0 \longrightarrow \mathbb{M}(\lambda) \longrightarrow M  \longrightarrow \Bbbk_{\text{tr}}
\longrightarrow 0
\end{align}
in $\mathscr{C}(\bf G)$. Then using Corollary \ref{Bsplit}, there exists an element $\xi \in M $ such that $z\xi= \xi$ for any $z\in \bf B$. Noting that $M_{\text{tr}}=\Bbbk \xi$, thus $\dot{s}\xi=a\xi$ for some $a\in \Bbbk^*$. However the following equation
$$\dot{s} \varepsilon (x) \dot{s} \xi = \displaystyle \varepsilon(-x^{-1}) \dot{s} h(-x) \varepsilon(-x^{-1})\xi $$
shows that $a$ must be $1$. Therefore the short exact sequence (\ref{SES1}) is split which implies that $\text{Ext}^1_{\mathscr{C}(\bf G)}(\Bbbk_{\text{tr}},\mathbb{M}(\lambda))=0$.

\smallskip

\noindent (2)  $\text{Ext}^1_{\mathscr{C}(\bf G)}(\text{St},\mathbb{M}(\mu))=0$.
Suppose we have a short exact sequence
\begin{align} \label{SES2}
0 \longrightarrow \mathbb{M}(\mu) \longrightarrow N  \longrightarrow \text{St} \longrightarrow 0
\end{align}
in $\mathscr{C}(\bf G)$. By Corollary \ref{Bsplit}, there exists an element $\eta \in N_{\text{tr}}$ such that
$\dot{s}\eta =b\eta $ for some $b\in \Bbbk^*$ and  $\dot{s} \varepsilon(1) \eta=(\varepsilon (-1)-e ) \eta+ \varpi$ for some $\varpi \in \mathbb{M}(\mu)$.  Using $\dot{s}$ to act on both sides, we get $b=-1$. Moreover we have
$$\dot{s} \varepsilon (x) \eta=(\varepsilon (-x^{-1})-e ) \eta+ h(x^{-\frac{1}{2}})\varpi.$$
Now we consider the element $\dot{s}\varepsilon(z) \dot{s} \varepsilon (1) \eta$. Firstly we have
\begin{align} \label{Eq5.4}
\dot{s}\varepsilon(z) \dot{s} \varepsilon (1) \eta= \varepsilon(-z^{-1}) \dot{s} h(-z)\varepsilon(-z^{-1}) \varepsilon (1) \eta =\varepsilon(-z^{-1}) \dot{s} \varepsilon(z^2-z) \eta.
\end{align}
On the other hand, we get
\begin{align} \label{Eq5.5}
\dot{s}\varepsilon(z) \dot{s} \varepsilon(1) \eta=\dot{s} (\varepsilon (z-1)-\varepsilon(z) ) \eta+ \dot{s}\varepsilon(z)\varpi.
\end{align}
Compare the above two equations (\ref{Eq5.4}) and  (\ref{Eq5.5}), it is not difficult to  get
\begin{align} \label{Eq5.3}
\dot{s}\varepsilon(z)\varpi=\varepsilon(-z^{-1})h((z^2-z)^{-\frac{1}{2}}))\varpi+ ( h(z^{-\frac{1}{2}})-h((z-1)^{-\frac{1}{2}}))\varpi
\end{align}
for any $z\in \bar{\mathbb{F}}_q$.
Denote by
$$\varpi= \displaystyle f {\bf 1}_{\mu}+ \sum_{x\in \bar{\mathbb{F}}_q}g_x \epsilon(x)\dot{s} {\bf 1}_{\mu}, \quad \text{where} \ f,g_x \in \bar{\mathbb{F}}_q.$$ Then it is easy to see that $f=0$.
Moreover if $g_x\ne 0$ when $x\ne 0, -1$, then (\ref{Eq5.3}) can not hold for any $z\in \bar{\mathbb{F}}_q$.
Substitute $\varpi=a\dot{s}{\bf 1}_{\mu} +b\varepsilon(-1)\dot{s}{\bf 1}_{\mu}  $ into (\ref{Eq5.3}) and we get $\varpi=0$. Thus the short exact sequence (\ref{SES2}) is split which implies that $\text{Ext}^1_{\mathscr{C}(\bf G)}(\text{St},\mathbb{M}(\mu))=0$.

\smallskip

\noindent (3) $\text{Ext}^1_{\mathscr{C}(\bf G)}( \mathbb{M}(\theta),S)=0$ for some  simple object $S$ whose weights are different with $\theta$ and $\theta^s$. Suppose we have a short exact sequence
\begin{align} \label{SES3}
0 \longrightarrow S \longrightarrow L \longrightarrow \mathbb{M}(\theta) \longrightarrow 0
\end{align}
in $\mathscr{C}(\bf G)$. Then by Corollary \ref{Bsplit}, there exists $\zeta\in L_{\theta}$ such that $u \zeta =\zeta$  and thus  $\dot{s}\zeta\in L_{\theta^s}$ since the weights of $S$ are different with $\theta$ and $\theta^s$. Thus the short exact sequence (\ref{SES3}) is split and  we have $\text{Ext}^1_{\mathscr{C}(\bf G)}( \mathbb{M}(\theta),S)=0$.

\end{proof}

\begin{proposition} \label{injectivemodule}\normalfont
One has that $\Bbbk_{\text{tr}}$ and  $\mathbb{M}(\theta)$ are injective objects in the category $\mathscr{C}(\bf G)$ for any $\theta \in \widehat{\bf T}$.
\end{proposition}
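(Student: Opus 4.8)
The plan is to show that $\Bbbk_{\text{tr}}$ and each $\mathbb{M}(\theta)$ satisfy Baer's criterion within the finite-length category $\mathscr{C}(\bf G)$: it suffices to prove that every short exact sequence $0\to \Bbbk_{\text{tr}}\to M\to N\to 0$ (resp. $0\to\mathbb{M}(\theta)\to M\to N\to 0$) with $N$ \emph{simple} splits, since $\mathscr{C}(\bf G)$ is artinian and noetherian by Corollary \ref{finitelength}. Running over the three types of simple objects listed in $\text{Irr}({\bf G})$, I would split the argument into a ``different weights'' case and a ``coinciding weights'' case. Whenever $\text{Wt}(N)$ is disjoint from $\text{Wt}(\Bbbk_{\text{tr}})=\{\text{tr}\}$ (resp. from $\text{Wt}(\mathbb{M}(\theta))=\{\theta,\theta^s\}$), Proposition \ref{zeroExt} immediately gives $\text{Ext}^1_{\mathscr{C}(\bf G)}=0$, hence a split sequence. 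So the real content is the finitely many ``overlapping weight'' cases.

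For injectivity of $\Bbbk_{\text{tr}}$: the only simple objects $N$ whose weight set contains $\text{tr}$ are $\Bbbk_{\text{tr}}$ itself, $\text{St}$ (since $\text{St}\cong\text{Ind}_{\bf T}^{\bf B}\Bbbk_{\text{tr}}$ as $\Bbbk{\bf B}$-modules, so $\text{tr}\in\text{Wt}(\text{St})$), and $\mathbb{M}(\theta)$ with $\theta$ or $\theta^s$ trivial --- but a nontrivial $\theta$ cannot have $\theta^s=\text{tr}$ either, so in fact only $\Bbbk_{\text{tr}}$ and $\text{St}$ remain. For $N=\Bbbk_{\text{tr}}$ one must rule out a non-split self-extension; for $N=\text{St}$ one must show $\text{Ext}^1_{\mathscr{C}(\bf G)}(\text{St},\Bbbk_{\text{tr}})=0$. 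In each case I would use Corollary \ref{Bsplit} to pick a ${\bf B}$-splitting, producing a ${\bf T}$-eigenvector $\xi$ in the extension lying over the generator of $N$, then impose the relation (\ref{sus=xsty}), i.e. $\dot s\varepsilon(x)\dot s = \varepsilon(-x^{-1})\dot s h(-x)\varepsilon(-x^{-1})$, to the candidate action $\dot s\xi = a\xi + (\text{correction in }\Bbbk_{\text{tr}})$ and solve for the scalar/correction, exactly as in the proof of Proposition \ref{zeroExt}(1)--(2). The point is that the braid-type relation forces the scalar and the correction term to be the unique ones compatible with the already-known $\Bbbk{\bf G}$-structure on $\Bbbk_{\text{tr}}$, so the sequence splits.

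For injectivity of $\mathbb{M}(\theta)$: the simple objects $N$ with $\text{Wt}(N)\cap\{\theta,\theta^s\}\neq\emptyset$ are $\mathbb{M}(\theta)=\mathbb{M}(\theta^s)$ itself, and possibly $\Bbbk_{\text{tr}}$ or $\text{St}$ if $\theta=\text{tr}$ (excluded, since $\theta$ is nontrivial) --- so essentially only $N\cong\mathbb{M}(\theta)$ needs treatment, i.e. one must show $\text{Ext}^1_{\mathscr{C}(\bf G)}(\mathbb{M}(\theta),\mathbb{M}(\theta))=0$ (and, if $\theta^s\neq\theta$ could still produce a weight clash with some other $\mathbb{M}(\psi)$, handle $\psi$ with $\{\psi,\psi^s\}\cap\{\theta,\theta^s\}\neq\emptyset$, which forces $\psi\in\{\theta,\theta^s\}$, hence $\mathbb{M}(\psi)\cong\mathbb{M}(\theta)$). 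Given $0\to\mathbb{M}(\theta)\to L\to\mathbb{M}(\theta)\to 0$, Corollary \ref{Bsplit} yields $\zeta\in L_\theta$ with $u\zeta=\zeta$ for $u\in{\bf U}$ mapping to the standard generator downstairs; write $\dot s\zeta = \dot s\widehat{\bf 1}_\theta$-part plus a correction $\varpi\in\mathbb{M}(\theta)$, and also $\dot s\varepsilon(x)\zeta$ has a forced form. Applying (\ref{sus=xsty}) to $\dot s\varepsilon(z)\dot s\varepsilon(x)\zeta$ in two ways produces a functional equation for $\varpi$ of exactly the shape of (\ref{Eq5.3}); expanding $\varpi$ in the $\Bbbk{\bf B}$-basis $\{{\bf 1}_\theta\}\cup\{\varepsilon(y)\dot s{\bf 1}_{\theta^s}\}$ of $\mathbb{M}(\theta)$ and comparing coefficients will force $\varpi$ into the submodule generated by $\widehat{\bf 1}_\theta$ (indeed $\varpi=0$ after adjusting $\zeta$), giving a $\Bbbk{\bf G}$-splitting.

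The main obstacle I anticipate is the bookkeeping in case (3): unlike $\Bbbk_{\text{tr}}$ (one-dimensional) and $\text{St}$, the module $\mathbb{M}(\theta)$ is infinite-dimensional with the more intricate $\Bbbk{\bf B}$-structure $\Bbbk_\theta\oplus\text{Ind}_{\bf T}^{\bf B}\Bbbk_{\theta^s}$, so the functional equation analogous to (\ref{Eq5.3}) will have more terms, and one must carefully track how $\varepsilon(x)$, $h(x)$, and $\dot s$ act on the basis $\{u\dot w{\bf 1}_\theta\}$ to conclude that the only solution is $\varpi\in\Bbbk{\bf G}\widehat{\bf 1}_\theta$. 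A secondary subtlety is being careful about which simple $N$ actually share a weight with the module in question --- i.e. verifying that no nontrivial $\theta$ has $\theta$ or $\theta^s$ trivial, and that $\{\psi,\psi^s\}\cap\{\theta,\theta^s\}\neq\emptyset$ forces $\mathbb{M}(\psi)\cong\mathbb{M}(\theta)$ --- so that Proposition \ref{zeroExt} really does dispatch all the remaining cases. Once these verifications are in place, Baer's criterion in the artinian category $\mathscr{C}(\bf G)$ finishes the proof.
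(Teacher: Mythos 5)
Your overall strategy---reduce via Proposition~\ref{zeroExt} and the finite-length property to showing $\operatorname{Ext}^1$ vanishes against the finitely many simple objects sharing a weight, then split the sequence using Corollary~\ref{Bsplit} and the relation~(\ref{sus=xsty})---matches the paper's skeleton, and your weight bookkeeping for why only a handful of $N$ survive is correct. There are, however, two genuine problems and one worthwhile divergence to note.

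The serious gap: the proposition asserts injectivity of $\mathbb{M}(\theta)$ for \emph{any} $\theta\in\widehat{\bf T}$, including $\theta=\operatorname{tr}$, but your plan explicitly excludes $\theta=\operatorname{tr}$ and so says nothing about $\mathbb{M}(\operatorname{tr})$. This is not a trivial omission: $\mathbb{M}(\operatorname{tr})$ is not simple (it is a nonsplit extension of $\Bbbk_{\operatorname{tr}}$ by $\operatorname{St}$), and its injectivity is the most delicate part of the paper's proof. Concretely, the paper must show $\operatorname{Ext}^1_{\mathscr{C}(\bf G)}(\Bbbk_{\operatorname{tr}},\mathbb{M}(\operatorname{tr}))=0$ and $\operatorname{Ext}^1_{\mathscr{C}(\bf G)}(\operatorname{St},\mathbb{M}(\operatorname{tr}))=0$; the first ultimately rests on identifying $\operatorname{Ext}^1_{\mathscr{C}(\bf G)}(\Bbbk_{\operatorname{tr}},\operatorname{St})\cong\Bbbk$ (the unique nonsplit extension being $\mathbb{M}(\operatorname{tr})$ itself, so the long exact sequence kills the group), and the second reduces to $\operatorname{Ext}^1_{\mathscr{C}(\bf G)}(\operatorname{St},\operatorname{St})=0$, which is established by exactly the kind of functional-equation calculation (equation~(\ref{Eq5.6})) you sketch for the nontrivial-$\theta$ case. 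None of this appears in your proposal, so as written it does not prove the stated proposition.

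A smaller inaccuracy: you write ``$\mathbb{M}(\theta)=\mathbb{M}(\theta^s)$''. By \cite[Proposition 2.8]{CD3} these are isomorphic only when $\theta=\theta^s$; for generic nontrivial $\theta$ they are distinct simple modules, and the paper correspondingly treats the two extensions $0\to\mathbb{M}(\theta)\to K\to\mathbb{M}(\theta)\to0$ and $0\to\mathbb{M}(\theta)\to L\to\mathbb{M}(\theta^s)\to0$ as separate cases (its (\ref{SES5}) and (\ref{SES6})), with slightly different bookkeeping. Your argument needs both.

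Finally, a route difference that does work but costs extra effort: for injectivity of $\Bbbk_{\operatorname{tr}}$ the paper simply invokes \cite[Theorem 1]{FS} to get that $\Bbbk_{\operatorname{tr}}$ is injective as a $\Bbbk{\bf G}$-module, hence a fortiori injective in the full subcategory $\mathscr{C}(\bf G)$. You instead propose to verify $\operatorname{Ext}^1_{\mathscr{C}(\bf G)}(\Bbbk_{\operatorname{tr}},\Bbbk_{\operatorname{tr}})=0$ and $\operatorname{Ext}^1_{\mathscr{C}(\bf G)}(\operatorname{St},\Bbbk_{\operatorname{tr}})=0$ by hand; that is feasible (the first follows from perfectness of $SL_2(\bar{\mathbb{F}}_q)$, the second from a braid-relation computation), but it re-derives something the paper gets for free, and the paper in fact uses the Farkas--Snider injectivity as an input elsewhere in the same proof, so leaning on it is the economical choice.
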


\begin{proof} By \cite[Theorem 1]{FS}, we see that $\Bbbk_{\text{tr}}$ is an injective $\Bbbk {\bf G}$-module and thus it is injective in  $\mathscr{C}(\bf G)$. According to Proposition \ref{zeroExt}, it is enough to show that
$$\text{Ext}^1_{\mathscr{C}(\bf G)}(\Bbbk_{\text{tr}},\mathbb{M}(\text{tr}))=0,\  \text{Ext}^1_{\mathscr{C}(\bf G)}(\text{St},\mathbb{M}(\text{tr})))=0,\ \text{Ext}^1_{\mathscr{C}(\bf G)}( \mathbb{M}(\lambda),\mathbb{M}(\theta))=0,$$
where $\lambda= \theta$ or $\theta^s$.

\smallskip

\noindent (1) $\text{Ext}^1_{\mathscr{C}(\bf G)}(\Bbbk_{\text{tr}},\mathbb{M}(\text{tr}))=0$.
Since there is a short exact sequence
$$0 \longrightarrow \text{St} \longrightarrow \mathbb{M}(\text{tr}) \longrightarrow \Bbbk_{\text{tr}}\longrightarrow 0, $$
we get a long exact sequence
$$
\aligned  &\  0 \longrightarrow   \text{Hom}_{\mathscr{C}(\bf G)}(\Bbbk_{\text{tr}},\text{St}) \longrightarrow \text{Hom}_{\mathscr{C}(\bf G)}(\Bbbk_{\text{tr}},\mathbb{M}(\text{tr}))
 \longrightarrow \text{Hom}_{\mathscr{C}(\bf G)}(\Bbbk_{\text{tr}}, \Bbbk_{\text{tr}}) \\
&\  \longrightarrow \text{Ext}^1_{\mathscr{C}(\bf G)}(\Bbbk_{\text{tr}},\text{St})
\longrightarrow \text{Ext}^1_{\mathscr{C}(\bf G)}(\Bbbk_{\text{tr}},\mathbb{M}(\text{tr}))
 \longrightarrow \text{Ext}^1_{\mathscr{C}(\bf G)}(\Bbbk_{\text{tr}}, \Bbbk_{\text{tr}})\longrightarrow \dots.
\endaligned
$$
It is easy to check that $\text{Hom}_{\mathscr{C}(\bf G)}(\Bbbk_{\text{tr}},\mathbb{M}(\text{tr}))=0$ and
$\text{Hom}_{\mathscr{C}(\bf G)}(\Bbbk_{\text{tr}}, \Bbbk_{\text{tr}})\cong \Bbbk$. Noting that
$\text{Ext}^1_{\mathscr{C}(\bf G)}(\Bbbk_{\text{tr}}, \Bbbk_{\text{tr}})=0$ because  $\Bbbk_{\text{tr}}$ is injective in  $\mathscr{C}(\bf G)$, it is enough to verify that $\text{Ext}^1_{\mathscr{C}(\bf G)}(\Bbbk_{\text{tr}},\text{St})\cong  \Bbbk$. Given a short exact sequence
$$0 \longrightarrow \text{St} \longrightarrow M \longrightarrow \Bbbk_{\text{tr}}\longrightarrow 0$$
in  $\mathscr{C}(\bf G)$,  then by Corollary \ref{Bsplit}, we get
$$ \text{Hom}_{{\bf G}}(\mathbb{M}(\text{tr}), M)\cong \text{Hom}_{{\bf B}}(\Bbbk_{\text{tr}}, M)\cong \Bbbk$$ using Frobenius reciprocity. Thus $M\cong \mathbb{M}(\text{tr})$ or $M\cong \text{St} \oplus \Bbbk_{\text{tr}}$. Therefore $\text{Ext}^1_{\mathscr{C}(\bf G)}(\Bbbk_{\text{tr}},\text{St})\cong  \Bbbk$ and hence $\text{Ext}^1_{\mathscr{C}(\bf G)}(\Bbbk_{\text{tr}},\mathbb{M}(\text{tr}))=0$.

\smallskip

\noindent (2) $\text{Ext}^1_{\mathscr{C}(\bf G)}(\text{St},\mathbb{M}(\text{tr}))=0$.
Using the short exact sequence
$$0 \longrightarrow \text{St} \longrightarrow \mathbb{M}(\text{tr}) \longrightarrow \Bbbk_{\text{tr}}\longrightarrow 0, $$
we get a long exact sequence
$$
\aligned  &\  0 \longrightarrow   \text{Hom}_{\mathscr{C}(\bf G)}(\text{St},\text{St}) \longrightarrow \text{Hom}_{\mathscr{C}(\bf G)}(\text{St},\mathbb{M}(\text{tr}))
 \longrightarrow \text{Hom}_{\mathscr{C}(\bf G)}(\text{St}, \Bbbk_{\text{tr}}) \\
&\  \longrightarrow \text{Ext}^1_{\mathscr{C}(\bf G)}(\text{St},\text{St})
\longrightarrow \text{Ext}^1_{\mathscr{C}(\bf G)}(\text{St},\mathbb{M}(\text{tr}))
 \longrightarrow \text{Ext}^1_{\mathscr{C}(\bf G)}(\text{St}, \Bbbk_{\text{tr}})\longrightarrow \dots.
\endaligned
$$
Since  $\Bbbk_{\text{tr}}$ is injective and we have $\text{Ext}^1_{\mathscr{C}(\bf G)}(\text{St}, \Bbbk_{\text{tr}})=0$, it is enough to show that $\text{Ext}^1_{\mathscr{C}(\bf G)}(\text{St},\text{St})=0$.
Given a short exact sequence
\begin{align} \label{SES4}
0 \longrightarrow \text{St} \longrightarrow N \longrightarrow \text{St}\longrightarrow 0
\end{align}
in  $\mathscr{C}(\bf G)$, then there exists $\xi_1,\xi_2 \in N_{\text{tr}}$ such that $N\cong \Bbbk {\bf U}\xi_1 \oplus \Bbbk {\bf U} \xi_2 $ as $\Bbbk {\bf B}$-modules and $\Bbbk {\bf U}\xi_1$ is isomorphic to the Steinberg module.
Firstly it is easy to see that  $\dot{s} \xi_2= -\xi_2 +a\xi_1$ for some $a\in \Bbbk$. Using $\dot{s}$ to act on both sides, we get $a=0$ and thus $\dot{s} \xi_2= -\xi_2$. On the other hand, we have
$$\dot{s} \varepsilon (1) \xi_2=(\varepsilon (-1)-e ) \xi_2+ \varpi$$ for some $\varpi \in \Bbbk {\bf U} \xi_1$. Denote by $ \varpi=\displaystyle  \sum_{x\in \bar{\mathbb{F}}_q}a_x \varepsilon(x) \xi_1$ and then we get
$$\dot{s} \varepsilon (y) \xi_2=(\varepsilon (-y^{-1})-e ) \xi_2+ \sum_{x\in \bar{\mathbb{F}}_q}a_x \varepsilon(xy^{-1}) \xi_1$$
for any $y\in \bar{\mathbb{F}}^*_q$. Now we consider the element $\dot{s}\varepsilon (z)
\dot{s} \varepsilon(1) \xi_2$. Firstly we have
\begin{align}\label{Eq5.1}
\dot{s}\varepsilon (z)\dot{s} \varepsilon (1) \xi_2=\varepsilon(-z^{-1}) \dot{s} h(-z)\varepsilon(-z^{-1}) \varepsilon (1) \xi_2 =\varepsilon(-z^{-1}) \dot{s} \varepsilon(z^2-z) \xi_2.
\end{align}
On the other hand, we have
\begin{align}\label{Eq5.2}
\dot{s}\varepsilon(z) \dot{s} \varepsilon(1) \xi_2=\dot{s} (\varepsilon (z-1)-\varepsilon(z) ) \xi_2+ \dot{s}\varepsilon(z)\varpi.
\end{align}
Combining (\ref{Eq5.1}) and (\ref{Eq5.2}),  it is not difficult to get
\begin{align} \label{Eq5.6}
\dot{s}\varepsilon(z) \sum_{x\in \bar{\mathbb{F}}_q}a_x \varepsilon(x) \xi_1 = \sum_{x\in \bar{\mathbb{F}}_q}a_x( \varepsilon(x(z^2-z)^{-1}-z^{-1})+\varepsilon(xz^{-1})-\varepsilon(x(z-1)^{-1})   )\xi_1
\end{align}
for any $z\in \bar{\mathbb{F}}_q$. If $a_x\ne 0$ when $x\ne 0, -1$, then the equation can not hold for any $z\in \bar{\mathbb{F}}_q$. However, substitute $\varpi=a\xi_1 +b\varepsilon(-1)\xi_1 $ into (\ref{Eq5.6}), we get $\varpi=0$ easily. Thus the short exact sequence (\ref{SES4}) is split and we have $\text{Ext}^1_{\mathscr{C}(\bf G)}(\text{St},\text{St})=0$ which implies that $\text{Ext}^1_{\mathscr{C}(\bf G)}(\text{St},\mathbb{M}(\text{tr}))=0$.

\smallskip
\noindent (3) $\text{Ext}^1_{\mathscr{C}(\bf G)}( \mathbb{M}(\lambda),\mathbb{M}(\theta))=0$
for $\lambda= \theta$ or $\theta^s$. Firstly given a short exact sequence
\begin{align} \label{SES5}
0 \longrightarrow \mathbb{M}(\theta)  \longrightarrow K \longrightarrow \mathbb{M}(\theta) \longrightarrow 0
\end{align}
in  $\mathscr{C}(\bf G)$, then by Corollary \ref{Bsplit}, we have
$$K \cong \Bbbk_{\theta} \oplus \Bbbk_{\theta} \oplus\text{Ind}_{\bf T}^{\bf B} \Bbbk_{\theta^s} \oplus\text{Ind}_{\bf T}^{\bf B} \Bbbk_{\theta^s} $$
as $\Bbbk {\bf B}$-modules and there exists $\xi_1, \eta_1 \in K_{\theta}$ and $\xi_2, \eta_2\in K_{\theta^s}$ such that $ \Bbbk \xi_1+\Bbbk {\bf U}\xi_2 \cong \mathbb{M}(\theta)$, which is a $\Bbbk {\bf G}$-submodule of $K$. We can assume that $\xi_2=\dot{s}\xi_1$. Now suppose
$$\dot{s} \eta_1 =a\xi_2 +b\eta_2, \quad \dot{s} \eta_2 =c\xi_1+ d\eta_1$$
for some $a,b,c,d \in \Bbbk$. Using $\dot{s}$ to act on both sides of the two equations, we get $c=\displaystyle -\frac{a}{b}$ and $d=\displaystyle \frac{1}{b}$. In particular, $b$ is nonzero.
Now set $\widetilde{\eta_2}= a\xi_2 +b\eta_2$. Thus $\Bbbk \eta_1+\Bbbk {\bf U}\widetilde{\eta_2}$ is isomorphic to $\mathbb{M}(\theta)$. So the short exact sequence (\ref{SES5}) is split which implies that $\text{Ext}^1_{\mathscr{C}(\bf G)}( \mathbb{M}(\theta),\mathbb{M}(\theta))=0$.

Next we consider the short exact sequence
\begin{align} \label{SES6}
0 \longrightarrow \mathbb{M}(\theta)  \longrightarrow L \longrightarrow \mathbb{M}(\theta^s) \longrightarrow 0
\end{align}
in  $\mathscr{C}(\bf G)$.
There exists $\zeta_1, \varrho_1 \in L_{\theta}$ and $\zeta_2, \varrho_2 \in L_{\theta^s}$ such that
$\mathbb{M}(\theta) \cong \Bbbk \zeta_1+\Bbbk {\bf U}\zeta_2$.
We can assume that $\zeta_2=\dot{s}\zeta_1$. Now suppose
$$\dot{s} \varrho_1 =a'\zeta_2 +b'\varrho_2, \quad \dot{s} \varrho_2 =c'\zeta_1+ d'\varrho_1,$$
then we also get $c'=\displaystyle -\frac{a'}{b'}$ and $d'=\displaystyle \frac{1}{b'}$.
In particular, $b'$ and $d'$ are nonzero. Set $\widetilde{\varrho_1 }= c'\zeta_1+ d'\varrho_1$, then we get $\mathbb{M}(\theta^s) \cong \Bbbk \varrho_2+\Bbbk {\bf U}\widetilde{\varrho_1 }$. Therefore the short exact sequence (\ref{SES6}) is split and we also get $\text{Ext}^1_{\mathscr{C}(\bf G)}(\mathbb{M}(\theta^s),\mathbb{M}(\theta))=0$. The proposition is proved.

\end{proof}

\begin{theorem} \label{enough injectives} \normalfont
The category $\mathscr{C}({\bf G})$ has enough injective objects.
\end{theorem}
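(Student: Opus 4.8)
The plan is to bootstrap from Propositions \ref{zeroExt} and \ref{injectivemodule} together with the finiteness of $\mathscr{C}(\bf G)$ established in Section 4. First I would reduce the statement to a claim about simple objects: since $\mathscr{C}(\bf G)$ is abelian (Theorem \ref{abeliancat}), every object has finite length (Corollary \ref{finitelength}), and $\mathscr{C}(\bf G)$ is clearly closed under finite direct sums (condition $(\star)$ concatenates, and $\mathfrak{d}$ is additive), it suffices to show that each simple object of $\mathscr{C}(\bf G)$ embeds into an injective object of $\mathscr{C}(\bf G)$. Granting this, one argues by induction on the length of an object $M$: choose a simple submodule $S\subseteq M$ and an embedding $S\hookrightarrow I_0$ with $I_0$ injective in $\mathscr{C}(\bf G)$; using injectivity of $I_0$, extend $S\hookrightarrow I_0$ along the inclusion $S\hookrightarrow M$ to a morphism $g:M\to I_0$ with $g|_S$ a monomorphism. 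Its kernel $N$ satisfies $N\cap S=0$ and, since $g\ne 0$, is a proper submodule of $M$; by the inductive hypothesis $N$ embeds into an injective object $I_1$, and extending this embedding along $N\hookrightarrow M$ gives $h:M\to I_1$ with $h|_N$ a monomorphism. Then $(g,h):M\to I_0\oplus I_1$ is a monomorphism (its kernel lies in $N\cap\ker h=0$) into an injective object of $\mathscr{C}(\bf G)$.

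It then remains to embed each of the three simple objects in $\text{Irr}({\bf G})=\{\Bbbk_{\text{tr}},\,\text{St},\,\mathbb{M}(\theta)\mid \theta\in\widehat{\bf T}\ \text{nontrivial}\}$ into an injective object. By Proposition \ref{injectivemodule}, $\Bbbk_{\text{tr}}$ and the modules $\mathbb{M}(\theta)$ for $\theta$ nontrivial are themselves injective in $\mathscr{C}(\bf G)$, so for them there is nothing to do. For $\text{St}$, I would use the short exact sequence
$$0\longrightarrow \text{St}\longrightarrow \mathbb{M}(\text{tr})\longrightarrow \Bbbk_{\text{tr}}\longrightarrow 0$$
recalled in Section 3. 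Here $\mathbb{M}(\text{tr})$ belongs to $\mathscr{C}(\bf G)$: it is isomorphic to $\Bbbk_{\text{tr}}\oplus\text{Ind}_{\bf T}^{\bf B}\Bbbk_{\text{tr}}$ as a $\Bbbk{\bf B}$-module, so $\mathfrak{d}(\mathbb{M}(\text{tr}))=2$ and condition $(\star)$ holds. Moreover $\mathbb{M}(\text{tr})$ is injective in $\mathscr{C}(\bf G)$ by Proposition \ref{injectivemodule} applied with $\theta=\text{tr}$. Hence $\text{St}$ embeds into the injective object $\mathbb{M}(\text{tr})$; in fact $\text{St}$ is the socle of $\mathbb{M}(\text{tr})$, so this embedding is even an injective envelope of $\text{St}$.

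Putting the two steps together, every object of $\mathscr{C}(\bf G)$ embeds into a finite direct sum of copies of $\Bbbk_{\text{tr}}$, $\mathbb{M}(\text{tr})$ and the simple modules $\mathbb{M}(\theta)$, which is an injective object of $\mathscr{C}(\bf G)$; therefore $\mathscr{C}(\bf G)$ has enough injective objects. I do not expect a genuine obstacle here: the substance of the argument — the vanishing of the relevant $\text{Ext}^1$-groups and the injectivity of $\Bbbk_{\text{tr}}$ and of the $\mathbb{M}(\theta)$ — has already been carried out in Propositions \ref{zeroExt} and \ref{injectivemodule}. The only points requiring a little care are the finite-length reduction above and the routine verification that $\mathbb{M}(\text{tr})\in\mathscr{C}(\bf G)$.
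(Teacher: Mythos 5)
Your argument is correct and follows the same route as the paper: the paper likewise reduces to the simple objects via Proposition~\ref{injectivemodule} (observing that each simple has an injective envelope in $\mathscr{C}(\bf G)$, with $\text{St}\hookrightarrow\mathbb{M}(\text{tr})$ handling the one non-injective simple) and then invokes ``a standard argument'' citing \cite[Theorem 2.7]{D1}, which is precisely the finite-length induction you have written out in detail.
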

\begin{proof} By Proposition \ref{injectivemodule}, the injective envelope for each simple object  exists in  the category $\mathscr{C}({\bf G})$. Thus a standard argument shows that  $\mathscr{C}({\bf G})$ has enough injective objects (see \cite[Theorem 2.7]{D1}).

\end{proof}

Now we show that $\mathscr{C}({\bf G})$ is a highest weight category. In Definition \ref{HWC},
the set of  weights  is $\Lambda= \{(\theta, J )\ | \ \theta \in \widehat{\bf T}, J \subset I(\theta) \}$ and we define the order of the weights by
$$(\theta_1, J_1) \leq (\theta_2, J_2 ), \ \text{if}\ \theta_1=\theta_2 \ \text{and} \ J_1\supseteq J_2.$$
Specifically, for ${\bf G} =SL_2(\bar{\mathbb{F}}_q)$, $\Lambda= \{(\text{tr}, \emptyset ),(\text{tr}, \{s\}), (\theta, \emptyset)\ | \ \theta \  \text{is nontrivial} \}$.
Let $S(\text{tr},  \{s\} )=A(\text{tr}, \{s\})=\Bbbk_{\text{tr}}$, $S(\text{tr}, \emptyset )=A(\text{tr}, \emptyset )=\text{St}$ and $S(\theta, \emptyset)=A(\theta, \emptyset)=\mathbb{M}(\theta)$.
 By Proposition \ref{injectivemodule}, we set $\mathcal{I}(\text{tr}, \{s\})= \Bbbk_{\text{tr}}$, $\mathcal{I}(\text{tr}, \emptyset)= \mathbb{M}(\text{tr})$ and $\mathcal{I}(\theta, \emptyset)=\mathbb{M}(\theta)$.
 It is not difficult to check that $\mathscr{C}({\bf G})$ with this setting satisfies  all the condition in Definition \ref{HWC} and thus we have the following theorem.

\begin{theorem} \label{Highestweicat} \normalfont
The category $\mathscr{C}({\bf G})$  is a highest weight category.
\end{theorem}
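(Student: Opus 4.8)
The plan is to verify, one axiom at a time, that the explicit data already assembled before the statement of Theorem \ref{Highestweicat} satisfy every clause of Definition \ref{HWC}. The foundational facts are in place: $\mathscr{C}({\bf G})$ is abelian (Theorem \ref{abeliancat}), noetherian and artinian (Corollary \ref{finitelength}), $\Bbbk$-linear, and has enough injective objects with an injective envelope for each simple object (Proposition \ref{injectivemodule} and Theorem \ref{enough injectives}); since every object has finite length, Grothendieck's AB5 condition and local artinianness are immediate, and the poset $\Lambda = \{(\text{tr},\emptyset),(\text{tr},\{s\}),(\theta,\emptyset)\}$ with the stated order is locally finite (indeed every principal down-set is finite). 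First I would record these structural properties as the preamble of the proof.

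Next I would check clauses (a) and (b). For (a): the classification of simple objects gives $\text{Irr}({\bf G}) = \{\Bbbk_{\text{tr}},\text{St},\mathbb{M}(\theta)\}$, and by \cite[Proposition 2.8]{CD3} these are pairwise non-isomorphic, so $\{S(\lambda)\}_{\lambda\in\Lambda}$ is a complete collection of non-isomorphic simples indexed by $\Lambda$. For (b): here $A(\lambda)=S(\lambda)$ for every $\lambda$, so the embedding $S(\lambda)\subset A(\lambda)$ is the identity and $A(\lambda)/S(\lambda)=0$, whence the condition on composition factors of the quotient is vacuous; finiteness of $\dim_\Bbbk\op{Hom}_{\mathscr{C}(\bf G)}(A(\lambda),A(\mu))$ and of $[A(\lambda):S(\mu)]$ follows since the $A(\lambda)$ are simple of finite length and $\mathscr{C}({\bf G})$ is $\Bbbk$-linear with finite-dimensional Hom-spaces between simples.

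The substance is clause (c), the description of the injective envelopes and their good filtrations. The injective envelopes were fixed as $\mathcal{I}(\text{tr},\{s\})=\Bbbk_{\text{tr}}$, $\mathcal{I}(\text{tr},\emptyset)=\mathbb{M}(\text{tr})$, and $\mathcal{I}(\theta,\emptyset)=\mathbb{M}(\theta)$. For the two weights of the form $(\theta,\emptyset)$ with $\theta$ nontrivial and for $(\text{tr},\{s\})$, the injective envelope equals $A(\lambda)=S(\lambda)$ itself, so the filtration is just $0=F_0\subset F_1=\mathcal{I}(\lambda)$ with $F_1\cong A(\lambda)$, and clauses (i)--(iv) hold trivially (there is no $n>1$ term). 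The one genuinely non-trivial case is $\lambda=(\text{tr},\emptyset)$: here $\mathcal{I}(\text{tr},\emptyset)=\mathbb{M}(\text{tr})$, and the short exact sequence $0\to\text{St}\to\mathbb{M}(\text{tr})\to\Bbbk_{\text{tr}}\to0$ gives the filtration $0=F_0\subset F_1=\text{St}\subset F_2=\mathbb{M}(\text{tr})$ with $F_1\cong A(\text{tr},\emptyset)=\text{St}$ and $F_2/F_1\cong\Bbbk_{\text{tr}}=A(\text{tr},\{s\})$; one checks $(\text{tr},\{s\})>(\text{tr},\emptyset)$ in $\Lambda$ (since $\{s\}\subsetneq\emptyset$ is false — rather $\emptyset\supsetneq\{s\}$ is false, so I must be careful: the order is $(\theta_1,J_1)\le(\theta_2,J_2)$ iff $\theta_1=\theta_2$ and $J_1\supseteq J_2$, hence $(\text{tr},\{s\})\ge(\text{tr},\emptyset)$ because $\{s\}\supseteq\emptyset$, i.e. $(\text{tr},\{s\})>(\text{tr},\emptyset)$, as required by (ii)), that each $\mu\in\Lambda$ occurs as $\mu(n)$ for at most one $n$, and that $\bigcup F_i=\mathbb{M}(\text{tr})$. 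I expect the main obstacle to be purely bookkeeping: getting the direction of the partial order right so that the unique ``extra'' composition factor $\Bbbk_{\text{tr}}$ of $\mathbb{M}(\text{tr})$ sits strictly above $\text{St}$ in $\Lambda$, and confirming that no verification secretly requires an $\text{Ext}^1$-vanishing statement beyond those already established in Propositions \ref{zeroExt} and \ref{injectivemodule}. Assembling these checks completes the proof that $\mathscr{C}({\bf G})$ is a highest weight category.
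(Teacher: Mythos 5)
Your overall approach is exactly the paper's: the paper assembles the data $\Lambda$, $S(\lambda)$, $A(\lambda)$, $\mathcal{I}(\lambda)$ just before the theorem and then asserts that "it is not difficult to check" Definition \ref{HWC}. You are simply writing out that check, and most of it is correct and reflects the intended argument: finite length gives local artinianness and the Grothendieck condition; simples are classified and pairwise non-isomorphic; $A(\lambda)=S(\lambda)$ makes (b) vacuous with obvious finiteness; and the only non-degenerate filtration in (c) is $0\subset\text{St}\subset\mathbb{M}(\text{tr})$.

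However, the one place where the verification is genuinely delicate --- the direction of the partial order in (c)(ii) --- you got backwards. You write ``$(\text{tr},\{s\})\ge(\text{tr},\emptyset)$ because $\{s\}\supseteq\emptyset$,'' but with the order as stated in the paper, $(\theta_1,J_1)\leq(\theta_2,J_2)$ iff $J_1\supseteq J_2$, the inclusion $\{s\}\supseteq\emptyset$ yields $(\text{tr},\{s\})\leq(\text{tr},\emptyset)$, i.e.\ the \emph{opposite} inequality. Thus the filtration $F_1=\text{St}=A(\text{tr},\emptyset)$, $F_2/F_1=\Bbbk_{\text{tr}}=A(\text{tr},\{s\})$ would violate (c)(ii) under the paper's stated order, since the factor that appears on top does not have strictly larger weight. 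What this actually reveals is a sign error in the paper's definition of $\leq$ on $\Lambda$ (or, equivalently, a swap in the assignment of $\text{St}$ and $\Bbbk_{\text{tr}}$ to $(\text{tr},\emptyset)$ and $(\text{tr},\{s\})$): to make the highest-weight axiom hold one must take $(\theta_1,J_1)\leq(\theta_2,J_2)$ iff $\theta_1=\theta_2$ and $J_1\subseteq J_2$, so that $(\text{tr},\emptyset)<(\text{tr},\{s\})$. Your conclusion $(\text{tr},\{s\})>(\text{tr},\emptyset)$ is indeed what is required, but your derivation of it from the stated order is incorrect, and by landing on the ``right'' answer for the wrong reason you missed the chance to flag the inconsistency. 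With the order (or labeling) corrected, your verification goes through unchanged.
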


According to the same discussion as \cite[Section 4 and Section 5]{D1}, all the indecomposable modules in $\mathscr{C}({\bf G})$ are $\op{St}, \Bbbk_{\op{tr}}$ and $\{\mathbb{M}(\theta)\mid \theta \in \widehat{\bf T} \}$. Therefore each  object in $\mathscr{C}({\bf G})$ is a direct sum of these modules. In particular,
the category $\mathscr{C}({\bf G})$ is a Krull–Schmidt category.

\begin{remark} \normalfont
In \cite{Chen}, X.Y. Chen constructed a complex representation  $M$ of  $SL_2(\bar{\mathbb{F}}_q)$, which contains the Steinberg module as a proper submodule and the corresponding quotient module is the trivial  module. However $M$ has no ${\bf B}$-stable line. So, this gives a negative answer to \cite[Conjecture 3.7]{D1}. Thus the principal representation category $\mathscr{O}({\bf G})$ introduced in \cite{D1} is not a highest weight category. Chen's work shows that $\mathscr{O}({\bf G})$ is very complicated in general. However the representation $M$ he constructed in \cite{Chen} is not in the category $\mathscr{C}(SL_2(\bar{\mathbb{F}}_q))$. Therefore, for general reductive algebraic group $\bf G$, the category $\mathscr{C}({\bf G})$ may be a good category for further study.
\end{remark}

\medskip

\noindent{\bf Acknowledgements.} The author is grateful to Nanhua Xi, Ming Fang and  Xiaoyu Chen for their suggestions and helpful discussions. The work is sponsored by Shanghai Sailing Program (No.21YF1429000) and NSFC-12101405.

\bigskip

\bibliographystyle{amsplain}

\end{document}